\newcolumntype{L}[1]{>{\raggedright\let\newline\\\arraybackslash\hspace{0pt}}m{#1}}
\newcolumntype{C}[1]{>{\centering\let\newline  \\\arraybackslash\hspace{0pt}}m{#1}}
\newcolumntype{R}[1]{>{\raggedleft\let\newline \\\arraybackslash\hspace{0pt}}m{#1}}
\def\la{\langle}
\def\ra{\rangle}
\newcommand{\Px}[2]{\text{prox}_{#1}(#2)}
\newcommand{\NM}[2]{\| #1 \|_{#2}}
\newcommand{\R}{\mathbb{R}}
\newcommand{\Diag}[1]{\text{Diag}(#1) }
\newcommand{\TV}[1]{\text{TV}(#1)}
\newcommand{\st}{\text{\;s.t.\;}}
\newcommand{\Tr}[1]{\text{\,Tr}\left(#1\right)}
\newcommand{\sym}[1]{\text{\,sym}(#1)}
\begin{document}

\title{Efficient Learning with a Family of Nonconvex Regularizers by Redistributing Nonconvexity}

\author{\name Quanming Yao \email qyaoaa@cse.ust.hk 
	\\
	\name James T. Kwok \email jamesk@cse.ust.hk
	\\
	\addr Department of Computer Science and Engineering \\
		Hong Kong University of Science and Technology \\
		Hong Kong		
}

\editor{}

\maketitle

\begin{abstract}
The use of convex regularizers allows for easy optimization, 
though they often produce biased estimation and inferior prediction performance. 
Recently, nonconvex regularizers have attracted a lot of attention and outperformed convex ones.  
However, the resultant optimization problem is much harder. 
In this paper, 
for a large class of nonconvex regularizers, 
we propose to move the nonconvexity from the regularizer to the loss. 
The nonconvex regularizer is then transformed to a familiar convex regularizer, 
while the resultant loss function can still be guaranteed to be smooth. 
Learning with the convexified regularizer can be performed by existing efficient 
algorithms originally designed for convex regularizers (such as the proximal algorithm, Frank-Wolfe algorithm,
alternating direction method of multipliers and stochastic gradient descent).
Extensions are made when
the convexified regularizer
does not have closed-form proximal step,
and when the loss function is nonconvex, nonsmooth.  Extensive experiments on a variety
of machine learning application scenarios show 
that
optimizing the transformed problem is much faster than running the state-of-the-art on the original problem.
\end{abstract}

\begin{keywords}
Nonconvex optimization, 
Nonconvex regularization,
Proximal algorithm,
Frank-Wolfe algorithm,
Matrix completion
\end{keywords}


\section{Introduction}
\label{sec:intro}

Risk minimization is fundamental to machine learning.
It admits a tradeoff between the empirical loss and regularization
as:
\begin{align}
\min_x F(x) \equiv f(x) + g(x),
\label{eq:pro}
\end{align}
where $x$ is the model parameter, $f$ is the loss and $g$ is the regularizer.
The choice of regularizers is important and 
application-specific, and is often the crux to obtain good prediction performance.
Popular examples include the sparsity-inducing regularizers, which have been commonly used in 
image processing \citep{beck2009fasttv,mairal2009online,jenatton2011proximal} and high-dimensional feature selection 
\citep{tibshirani2005sparsity,jacob2009group,liu2010moreau}; and
the low-rank regularizer in
matrix and tensor
learning, with
good empirical performance 
on tasks such as
recommender systems \citep{candes2009exact,mazumder2010spectral} and visual data analysis \citep{liu2013tensor,lu2014generalized}.

Most of these regularizers are convex. Well-known 
examples include the $\ell_1$-regularizer for sparse coding \citep{donoho2006compressed}, and
the nuclear norm regularizer in low-rank matrix learning \citep{candes2009exact}.
Besides having nice theoretical guarantees, convex regularizers also allow easy optimization.
Popular optimization algorithms in machine learning include the proximal algorithm \citep{parikh2013proximal},
Frank-Wolfe (FW) algorithm \citep{jaggi2013revisiting}, the alternating direction method of multipliers (ADMM)
\citep{boyd2011distributed},
stochastic gradient descent and its variants \citep{bottou1998online,xiao2014proximal}.
Many of these are
efficient, scalable, and have
sound convergence properties.

\begin{table*}[t]
	\centering
	\caption{Example nonconvex regularizers.
		Here, $\beta > 0$
		and $\theta > 0$.}
	\begin{tabular}{C{95px}|c | c |  c | c}
		\hline
		& $\kappa(\alpha)$                  
		& $\kappa'(\alpha)$                                                                            
		& $\kappa_0$ 
		& $\rho$\\ \hline
		GP \citep{geman1995nonlinear}
		& $\frac{\beta \alpha}{\theta + \alpha}$   
		& $\frac{\beta \theta}{(\theta + \alpha)^2}$    
		& $\frac{\beta}{\theta}$
		& $\frac{2\beta}{\theta^2}$
		\\ \hline
		LSP \citep{candes2008enhancing}
		& $\beta \log(1 + \frac{\alpha}{\theta})$ 
		& $\frac{\beta}{\theta + \alpha}$    
		& $\frac{\beta}{\theta}$
		& $\frac{\beta}{\theta^2}$ 
		\\ \hline
		MCP \citep{zhang2010nearly}  
		& $\begin{cases}
		\beta \alpha - \frac{\alpha^2}{2 \theta} & \alpha \le \beta \theta \\
		\frac{1}{2}\theta \beta^2          & \alpha > \beta \theta
		\end{cases}$                                       
		
		& $\begin{cases}
		\beta - \frac{\alpha}{\theta} & \alpha \le \beta \theta \\
		0         & \alpha > \beta \theta
		\end{cases}$                                                                                
		
		& $\beta$  
		& $\frac{1}{\theta}$ 
		\\ \hline
		Laplace \citep{trzasko2009highly}
		&  $\beta(1 - \exp(-\frac{\alpha}{\theta}))$                                      
		&  $\frac{\beta}{\theta} \exp\left(-\frac{\alpha}{\theta}\right) $                                                                                          
		& $\frac{\beta}{\theta}$
		& $\frac{\beta}{\theta^2}$ \\ \hline
		SCAD \citep{fan2001variable}    
		&   $\begin{cases}
		\beta \alpha & \alpha \le \beta \\
		\frac{- \alpha^2 + 2\theta\beta \alpha - \beta^2}{2(\theta - 1)} & \beta < \alpha \le \theta\beta \\
		\frac{\beta^2(1+\theta)}{2} & \alpha > \theta\beta
		\end{cases}$                                     
		
		& $\begin{cases}
		\beta                                     & \alpha \le \beta             \\
		\frac{-\alpha + \theta\beta}{\theta - 1} & \beta < \alpha \le \theta\beta \\
		0   & \alpha > \theta\beta
		\end{cases}$                                                                                
		
		& $\beta$
		
		& $\frac{1}{\theta - 1}$ 
		\\ \hline
	\end{tabular}
	\label{tab:regdecomp}
\end{table*}

However, convex regularizers often lead to biased
estimation.  For example, in sparse coding,  the solution obtained by the $\ell_1$-regularizer
is often not as sparse and accurate \citep{zhang2010analysis}.
In low-rank matrix learning,
the estimated rank 
obtained with the nuclear norm regularizer
is often much higher 
\citep{mazumder2010spectral}.
To alleviate this problem,
a number of nonconvex regularizers have been recently proposed
\citep{geman1995nonlinear,fan2001variable,candes2008enhancing,zhang2010nearly,trzasko2009highly}.
As can be seen from Table~\ref{tab:regdecomp},
they are all 
(i) nonsmooth at zero, which encourage a sparse solution; and (ii) concave, 
which place a smaller penalty than the $\ell_1$-regularizer
on features with large magnitudes.
Empirically, these nonconvex regularizers
usually 
outperform convex regularizers.

Even with a convex loss, the resulting nonconvex problem is much harder to
optimize. One can use
general-purpose nonconvex optimization solvers such as 
the concave-convex procedure \citep{yuille2002concave}. However,
the subproblem 
in each iteration
can be as expensive as the original problem,
and 
the concave-convex procedure 
is thus often slow in practice \citep{gongZLHY2013,zhongK2014gdpan}.

Recently, the proximal algorithm has also been extended for nonconvex problems.
Examples include the NIPS \citep{sra2012scalable}, IPiano \citep{ochs2014ipiano},
UAG \citep{ghadimi2016accelerated}, 
GIST \citep{gongZLHY2013},
IFB \citep{boct2016inertial}, and nmAPG \citep{li2015accelerated}.
Specifically, NIPS, IPiano and UAG allow 
$f$ 
in (\ref{eq:pro})
to be Lipschitz smooth (possibly nonconvex) but $g$ has to be convex;
while
GIST, IFB and nmAPG 
further allow $g$ to be nonconvex.
The current state-of-the-art is nmAPG.
However, efficient computation of
the underlying proximal operator is only possible
for simple nonconvex regularizers.
When the regularizer is complicated, 
such as the nonconvex versions of the fused lasso and overlapping group lasso regularizers \citep{zhongK2014gdpan},
the corresponding proximal step
has to be solved numerically
and is again expensive.
Another approach is by using
the proximal average
\citep{zhongK2014gdpan},
which computes and averages the
proximal step 
of each underlying regularizer.
However, because 
the proximal step
is only approximate,
convergence is usually slower than typical applications of the proximal algorithm \citep{li2015accelerated}.

When $f$ is smooth, there are endeavors to extend other algorithms from convex to nonconvex optimization.
For the global consensus problem, standard ADMM converges only when $g$ is convex 
\citep{hong2016convergence}.
When $g$ is nonconvex,
convergence of ADMM is only established for problems of the form $\min_{x, y}f(x) + g(y): y = A x$, 
where matrix $A$ has full row rank \citep{li2015global}.
The convergence of ADMM in more general cases is an open issue.
More recently, the
stochastic variance reduced gradient (SVRG) algorithm \citep{johnson2013accelerating},
which is a variant of the popular stochastic gradient descent
with reduced variance in the gradient estimates,
has also been extended for problems 
with
nonconvex 
$f$.
However, the regularizer $g$ is still required to be convex
\citep{reddiHSPS2016,zhuH2016}.

Sometimes, it is desirable to have a 
nonsmooth loss $f$.
For example, the absolute loss is more robust to outliers than the square loss, and
has been popularly used in applications such as  
image denoising 
\citep{yan2013restoration}, 
robust dictionary learning 
\citep{zhao2011background}
and robust PCA 
\citep{candes2011robust}.
The resulting optimization problem becomes more challenging.
When both $f$ and $g$ are convex,
ADMM is often the main optimization tool for problem~\eqref{eq:pro} \citep{he20121}.
However, when either $f$ or $g$ is nonconvex, ADMM no longer guarantees convergence. 
Besides a nonconvex $g$,
we may also want to use a nonconvex loss $f$,
such as $\ell_0$-norm \citep{yan2013restoration} 
and capped-$\ell_1$ norm \citep{sun2013robust},
as they are more robust to outliers
and can 
obtain
better performance.
However, when 
$f$ is nonsmooth and nonconvex, 
none of the above-mentioned algorithms (i.e., proximal algorithms,
FW algorithms, ADMM, and SVRG)  can be used.
As a last resort, one can use more general nonconvex optimization approaches
such as convex concave programming (CCCP) \citep{yuille2002concave}.
However, they are slow in general.

In this paper, we first consider the case where the
loss function $f$ is smooth (possibly nonconvex) and the regularizer $g$ is nonconvex.
We propose to handle nonconvex regularizers by reusing the
abundant repository of efficient 
convex algorithms originally designed for convex regularizers.
The key is to shift the nonconvexity associated with the nonconvex regularizer to the loss
function, and transform the nonconvex regularizer to a familiar convex regularizer.
To illustrate the practical usefulness of this convexification scheme, we show how it can be used
with popular optimization algorithms in machine learning.
For example, for the proximal algorithm,
the resultant proximal step can be much easier after transformation.
Specifically,
for the nonconvex tree-structured lasso and nonconvex sparse group lasso,
we show that the corresponding proximal steps have closed-form solutions on the
transformed problems, but not on the original ones.
For the nonconvex total variation problem,
though there is no closed-form solution for the proximal step before and after the transformation,
we show that the proximal step is still cheaper and easier for optimization after the transformation.
To allow further speedup,
we propose a proximal algorithm variant that allows the use of inexact proximal
steps with convex $g$ 
when it has no closed-form proximal step solution.
For the FW algorithm,
we consider its application to nonconvex low-rank matrix learning problems,
and propose a variant with guaranteed convergence to a critical point of
the nonconvex problem.
For SVRG in stochastic optimization 
and ADMM in consensus optimization,
we show that these algorithms have convergence guarantees on the transformed problems
but not on the original ones.

We further consider the case where $f$ is also nonconvex and nonsmooth (and 
$g$ is nonconvex).
We demonstrate that 
problem \eqref{eq:pro} can be transformed to an equivalent problem with a
smooth loss and convex regularizer using our proposed idea.
However, as the proximal step with the transformed regularizer has to be solved numerically
and exact proximal step is required,
usage with the proximal algorithm
may not be efficient.
We show that this problem can be addressed by the proposed inexact proximal algorithm.
Finally,
in the experiments,
we demonstrate the above-mentioned advantages of optimizing the transformed problems instead of the original ones on various tasks,
and show  that running algorithms on the transformed problems can be much faster than the state-of-art on the original ones.

The rest of the paper is organized as follows. Section~\ref{sec:relworks}
provides a review on the related works. 
The main idea for problem transformation is presented in Section~\ref{sec:shift},
and its usage with various algorithms are discussed in Section~\ref{sec:examples}. 
Experimental results are shown in Section~\ref{sec:expt},
and the last section gives some concluding remarks.
All the proofs are in Appendix~\ref{sec:proof}.
Note that this paper extends a shorter version published in the proceedings of the International
Conference of Machine Learning \citep{yao2016efficient}.


\subsection*{Notation}

We denote vectors and matrices
by lowercase and uppercase boldface letters, respectively.
For a vector $x \in \R^d$,
$\NM{x}{2} = (\sum_{i = 1}^d |x_i|^2)^{1/2}$ is its $\ell_2$-norm,
$\Diag{x}$ returns a diagonal matrix $X \in \R^{d \times d}$ with $X_{ii} = x_i$.
For a matrix $X \in \R^{m \times n}$ (where $m \le n$ without loss of generality),
its nuclear norm is
$\NM{X}{*} = \sum_{i = 1}^m \sigma_i(X)$, where $\sigma_i(X)$'s are 
the singular values of
$X$, and 
its Frobenius norm
is 
$\NM{X}{F} = \sqrt{\sum_{i = 1}^m \sum_{j = 1}^n X_{ij}^2}$,
and $\NM{X}{\infty} = \max_{i,j} |X_{ij}|$.
For a square matrix $X$, 
	$X \in \mathcal{S}_+$ indicates it is a positive semidefinite.
For two matrices $X$ and $Y$, $\langle X, Y \rangle = \sum_{i,j}
X_{ij} Y_{ij}$.
For a smooth function $f$, $\nabla f(x)$ is its gradient at $x$.
For a convex but nonsmooth $f$, 
$\partial f(x) = \{u: f(y) \ge f(x) + \la u,y - x\ra\}$ is its subdifferential at $x$,
and $g\in \partial f(x)$ is a subgradient.


\section{Related Works}
\label{sec:relworks}

In this section,
we review some popular algorithms for solving \eqref{eq:pro}. Here,
$f$ is assumed to be Lipschitz smooth.


\subsection{Convex-Concave Procedure (CCCP)}
\label{sec:difcp}

The convex-concave procedure (CCCP) \citep{yuille2002concave,zhaosong2012} is a popular 
and general solver for \eqref{eq:pro}.
It assumes that $F$ can be decomposed as a difference of convex (DC) functions \citep{hiriart85}, 
i.e., $F(x) = \tilde{F}(x) + \hat{F}(x)$
where $\tilde{F}$ is convex and $\hat{F}$ is concave. 
In each CCCP iteration, $\hat{F}$ is linearized  at $x_t$,
and $x_{t + 1}$ is generated as
\begin{align}
x_{t + 1} = 
\arg\min_{x}
\tilde{F}(x) + \hat{F}(x_{t}) - (x - x_t)^{\top} s_t,
\label{eq:dcsub}
\end{align}
where $s_t \in \partial [ - \hat{F}(x_t) ]$ is a subgradient.
Note that as the last two terms are linear, 
\eqref{eq:dcsub} is a convex problem and can be easier than the original problem $F$.

However, CCCP is expensive as \eqref{eq:dcsub} needs to be exactly solved.
Sequential convex programming (SCP) \citep{zhaosong2012} improves its efficiency when $F$ is in form of \eqref{eq:pro}.
It assumes that $f$ is $L$-Lipschitz smooth (possibly nonconvex); while $g$ can be nonconvex,
but admits a DC decomposition as $g(x) = \tilde{\varsigma}(x) + \hat{\varsigma}(x)$. 
It 
then
generates $x_{t + 1}$ as
\begin{align}
x_{t + 1} 
& = \arg\min_{x}
f(x_{t}) + (x - x_t)^{\top} \nabla f(x_t) + \frac{L}{2}\NM{x - x_t}{2}^2
+ \tilde{\varsigma}(x) + \hat{\varsigma}(x_t) 
- (x - x_t)^{\top} s_t
\notag \\
& = \arg\min_{x} \frac{1}{2}\NM{x - x_t - s_t + \frac{1}{L} \nabla f(x_t)}{2}^2 + \tilde{\varsigma}(x),
\label{eq:scpsub}
\end{align}
where $s_t \in \partial \left( -\hat{\varsigma}(x_t) \right)$. 
When $\tilde{\varsigma}$ is simple, \eqref{eq:scpsub} has a closed-form solution,
and 
SCP can be faster than CCCP.
However, its convergence is still slow 
in general \citep{gongZLHY2013,zhongK2014gdpan,li2015accelerated}.


\subsection{Proximal Algorithm}

The proximal algorithm \citep{parikh2013proximal}
has been popularly used for optimization problems of the form in \eqref{eq:pro}.
Let $f$ be convex and $L$-Lipschitz smooth, and
$g$ is convex.
The proximal algorithm 
generates iterates $\{ x_t \}$ as
\begin{align*}
x_{t + 1} 
& = \arg\min_{x} f(x_t) + (x - x_t)^{\top} \nabla f(x_t) + \frac{L}{2}\NM{x - x_t}{2}^2 + g(x) 
\\
& = \text{prox}_{\frac{1}{L} g}\left(x_t - \frac{1}{L} \nabla f(x_t)\right),
\end{align*}
where
$\Px{g}{z} \equiv \arg\min_{x} \frac{1}{2}\NM{x - z}{2}^2 + g(x)$
is the proximal step,
The proximal algorithm converges at a rate of $O(1/T)$. This can be further accelerated to $O(1/T^2)$ by 
modifying the 
generation
of $\{ x_t \}$ as
\citep{beck2009fast,nesterov2013gradient}:
\begin{align*}
y_t 
& = x_t + \frac{\alpha_{t - 1} - 1}{\alpha_t}(x_t - x_{t - 1}),
\\
x_{t + 1} 
& = \text{prox}_{\frac{1}{L} g}\left(y_t - \frac{1}{L} \nabla f(y_t)\right),
\end{align*}
where $\alpha_0 = \alpha_1 = 1$ and $\alpha_{t + 1} = \frac{1}{2}(\sqrt{4 \alpha_t^2 + 1} + 1)$.

Recently, the proximal algorithm has been extended to 
nonconvex optimization.
In particular, NIPS \citep{sra2012scalable}, IPiano \citep{ochs2014ipiano}
and UAG \citep{ghadimi2016accelerated} allow $f$ to be nonconvex, while
$g$ is still required to be convex.
GIST \citep{gongZLHY2013},
IFB \citep{boct2016inertial} and nmAPG \citep{li2015accelerated}
further remove this restriction and allow $g$ to be nonconvex.
It is desirable that the proximal step has a closed-form solution.
This is true
for many convex regularizers
such as the lasso regularier \citep{tibshirani1996regression}, 
tree-structured lasso regularizer \citep{liu2010moreau,jenatton2011proximal} 
and sparse group lasso regularizer \citep{jacob2009group}.
However,
when $g$ is nonconvex, such solution only exists for some simple $g$,
e.g., nonconvex lasso regularizer \citep{gongZLHY2013},
and usually do not exist for more general cases,
e.g., nonconvex tree-structured lasso regularizer \citep{zhongK2014gdpan}.

On the other hand, 
\cite{zhongK2014gdpan} used proximal average \citep{bauschke2008proximal} to 
handle complicate $g$ which is in the form $g(x) = \sum_{i = 1}^K \mu_i g_i(x)$, where each $g_i$ has a simple proximal step.
The iterates are 
generated
as
\[ x_{t + 1} = 
\sum_{i = 1}^K \mu_i \cdot \text{prox}_{\frac{\mu_i}{L}g_i}\left(x_t - \frac{1}{L} \nabla
f(x_t)\right)/
\sum_{i = 1}^K \mu_i. \]
Each of the constituent proximal steps
$\text{prox}_{\frac{\mu_i}{L}g_i}(\cdot)$ can be computed inexpensively, and thus
the per-iteration complexity is low.
It only converges to an approximate solution to $\Px{g}{z}$, but an approximation guarantee is
provided.  However, empirically, the convergence can be slow.


\subsection{Frank-Wolfe (FW) Algorithm}
\label{sec:fw}

The FW algorithm \citep{frank1956algorithm}
is used for solving optimization problems of the form 
\begin{align}
\min_{x} f(x) \;:\; x \in \mathcal{C},
\label{eq:FW}
\end{align}
where $f$ is Lipschitz-smooth and convex, 
and $\mathcal{C}$ is a compact convex set.
Recently, it 
has been popularly used 
in machine learning 
\citep{jaggi2013revisiting}.
In each iteration, the FW algorithm generates the next iterate $ x_{t+1}$ as
\begin{eqnarray}
s_t & = & \arg\min_{s \in \mathcal{C}} \; s^{\top} \nabla f(x_t), \label{eq:stdfw1} \\
\gamma_t & = & \arg\min_{\gamma \in [0,1]} f( (1 - \gamma)x_t + \gamma s_t ), \label{eq:stdfw3} \\
x_{t + 1} & = & (1 - \gamma_t) x_t + \gamma_t s_t.\label{eq:stdfw4} 
\end{eqnarray}
Here,
(\ref{eq:stdfw1}) is a linear subproblem which can often be easily solved; 
(\ref{eq:stdfw3}) performs line search, 
and the next iterate $x_{t + 1}$ is generated from a 
convex combination of $x_t$ and $s_t$ in 
(\ref{eq:stdfw4}).
The FW algorithm has a 
convergence rate
of $O(1/T)$ \citep{jaggi2013revisiting}.

In this paper,
we will focus on using the FW algorithm to learn a low-rank matrix
$X\in \R^{m\times n}$.
Without loss of generality, we assume that $m \le n$.
Let $\sigma_i(X)$'s be the singular values of $X$.
The nuclear norm of $X$, $\NM{X}{*} = \sum_{i = 1}^m \sigma_i(X)$,
is the tightest convex envelope of $\text{rank}(X)$, and is often used as a low-rank regularizer \citep{candes2009exact}. 
The low-rank matrix learning problem can be written as
\begin{align} 
\label{eq:lowrank}
\min_{X} f(X) + \mu \NM{X}{*},
\end{align}
where $f$ is the loss.
For example,
in matrix completion
\citep{candes2009exact},
\begin{equation} \label{eq:loss}
f(X)= \frac{1}{2}\NM{\mathcal{P}_{\Omega}(X - O)}{F}^2,
\end{equation} 
where $O$ is the observed incomplete matrix, 
$\Omega \in \{ 0, 1 \}^{m \times n}$ contains indices
to the observed entries in $O$,
and
$[P_\Omega(A)]_{ij} = A_{ij}$ if $\Omega_{ij} = 1$,
and  0 otherwise.


The FW algorithm
for this nuclear norm regularized problem is shown in Algorithm~\ref{alg:stdfw}
\citep{zhang2012accelerated}.
Let the iterate 
at the $t$th iteration be $X_t$.
As in \eqref{eq:stdfw1},
the following linear subproblem has to be solved 
\citep{jaggi2013revisiting}:
\begin{equation} \label{eq:subprob}
\min_{S:\NM{S}{*} \le 1} \langle S, \nabla f(X_t) \rangle. 
\end{equation} 
This can be obtained from the rank-one SVD of $\nabla f(X_t)$
(step 3).
Similar to \eqref{eq:stdfw3},  line search is performed at step~4. 
As a rank-one matrix is added into $X_t$ in each iteration, it is convenient to
write $X_t$ as 
\begin{equation} \label{eq:uv}
\sum_{i = 1}^t u_i v_i^{\top} = U_t V_t^{\top}, 
\end{equation} 
where $U_t = [u_1, \dots, u_t]$ and $V_t= [v_1, \dots, v_t]$.
The FW algorithm has a convergence 
rate 
of $O(1/T)$ 
\citep{jaggi2013revisiting}.
To make it empirically faster,
Algorithm~\ref{alg:stdfw} also performs optimization at step~6 \citep{laue2012hybrid,zhang2012accelerated}.
Substituting $\NM{X}{*} = \min_{X = U V^{\top}} \frac{1}{2}\left( \NM{U}{F}^2 + \NM{V}{F}^2 \right)$ \citep{srebro2004maximum} into \eqref{eq:lowrank}, 
we have the following local optimization problem:
\begin{align}
\label{eq:fwfactor}
\min_{U, V} f(U V^{\top}) 
+ \frac{\mu}{2}( \NM{U}{F}^2 + \NM{V}{F}^2 ).
\end{align}
This can be solved by standard solvers such as L-BFGS \citep{nocedal2006numerical}.

\begin{algorithm}[ht]
\caption{Frank-Wolfe algorithm for problem~\eqref{eq:lowrank} with $f$ convex \citep{zhang2012accelerated}.}
	\begin{algorithmic}[1]
		\STATE $U_1 = [\;]$ and $V_1 = [\; ]$;
		\FOR{$t = 1 \dots T$}
		\STATE $[u_t, s_t, v_t] = \text{rank1SVD}(\nabla f(X_t))$;
		\STATE $[\alpha_t, \beta_t] = \arg\min_{\alpha \ge 0,
			\beta \ge 0} f( \alpha X_t + \beta u_t v_t^{\top} )
		+ \mu (\alpha\NM{X_t}{*} + \beta) $;
		\STATE $\bar{U}_t = \left[ \sqrt{\alpha_t} U_t; \sqrt{\beta_t} u_t \right]$ and
		$\bar{V}_t = \left[ \sqrt{\alpha_t} V_t; \sqrt{\beta_t} v_t \right]$; 
		\STATE obtain $[U_{t + 1}, V_{t + 1}]$ from \eqref{eq:fwfactor}, using $\bar{U}_t$ and $\bar{V}_t$ for warm-start;
		// $X_{t + 1} = U_{t + 1} V_{t + 1}^{\top}$
		\ENDFOR
		\RETURN $U_{T + 1}$ and $V_{T + 1}$.
	\end{algorithmic}
	\label{alg:stdfw}
\end{algorithm}


\subsection{Alternating Direction Method of Multipliers (ADMM)}

ADMM is a simple but powerful algorithm first introduced in the 1970s
\citep{glowinski1975}.
Recently, 
it has 
been popularly used in diverse fields such as
machine learning, data mining and image processing \citep{boyd2011distributed}.
It can be used to solve optimization problems of the form
\begin{equation} \label{eq:admobj}
\min_{x,y} \; f(x)+g(y) \; : \; Ax+By = c,  
\end{equation}
where $f, g$ are convex functions, and $A,B$ (resp. $c$) are
constant matrices (resp. vector) of appropriate sizes.
Consider the augmented Lagrangian
$L(x,y,u) = f(x)+g(y)+u^\top(Ax+By-c) +\frac{\tau}{2}\NM{Ax+By-c}{2}^2$,
where $u$ is the vector of Lagrangian multipliers, and $\tau>0$ is a penalty
parameter.
At the $t$th iteration of ADMM,  the values of $x,y$ and $u$ 
are updated as
\begin{eqnarray}
x_{t+1} & = & \arg\min_{x} L(x,y_{t},u_{t}), 
\label{eq:ADMM1}\\
y_{t+1} & = & \arg\min_{y} L(x_{t+1},y,u_{t}), 
\label{eq:ADMM2}\\
u_{t+1} & = & u_{t} + \tau(Ax_{t+1}+By_{t+1}-c).
\notag
\end{eqnarray}
By minimizing $L(x,y,u_k)$ w.r.t.  $x$ and $y$ in an alternating manner ((\ref{eq:ADMM1}) and
(\ref{eq:ADMM2})),
ADMM can more easily decompose the optimization problem when $f,g$ are separable.

In this paper, we will focus a special case of (\ref{eq:admobj}), namely, the consensus optimization problem:
\begin{align}
\min_{y, x^1, \dots, x^M}
\sum_{i = 1}^M f_i(x^i) + g(y) \;\;:\;\;
x^1 = \dots = x^M = y,
\label{eq:ADMMpro2}
\end{align}
Here, each $f_i$ is Lipschitz-smooth, $x^i$ is the variable in the local objective $f_i$, and $y$ is the global consensus variable.
This type of problems is often
encountered in machine learning, signal
processing and wireless communication \citep{Bertsekas1989,boyd2011distributed}. 
For example, in regularized
risk minimization, $y$ is the model parameter, $f_i$
is the regularized risk functional defined
on data subset $i$, and $g$ is the 
regularizer.
When $f_i$ is smooth and $g$ is convex, ADMM converges to a critical point of 
\eqref{eq:ADMMpro2} \citep{hong2016convergence}.
However, when $g$ is nonconvex, its convergence is still an open issue.


\section{Shifting Nonconvexity from Regularizer to Loss}
\label{sec:shift}

In recent years, a number of nonconvex regularizers have been 
proposed.
Examples include
the Geman penalty (GP) \citep{geman1995nonlinear},
log-sum penalty (LSP) \citep{candes2008enhancing}
and
Laplace penalty \citep{trzasko2009highly}.
In general,
learning with nonconvex regularizers is much more difficult than learning with convex
regularizers.  In this section, we show how to move the nonconvex component from the nonconvex
regularizers to the loss function.  Existing algorithms can then be reused to learn with
the convexified regularizers.

First, we make the following standard assumptions  on \eqref{eq:pro}.

\begin{itemize}[itemsep = 0cm, topsep=0.125cm]
	\item[A1.] $F$ is bounded from below and $\lim_{\NM{x}{2} \rightarrow \infty} F(x) = \infty$;
	
\item[A2.] $f$ is $L$-Lipschitz smooth
(i.e., $\NM{\nabla f(x) - \nabla f(y)}{2} \le L \NM{x - y}{2}$), but possibly nonconvex.
\end{itemize}
 
Let $\kappa$ be a function that is concave, non-decreasing, 
$\rho$-Lipschitz smooth with $\kappa'$ non-differentiable at finite points, 
and $\kappa(0) = 0$.
With the exception
of the capped-$\ell_1$ norm penalty \citep{zhang2010nearly} and $\ell_0$-norm regularizer,
all regularizers in Table~\ref{tab:regdecomp} satisfy requirements on $\kappa$.
We consider $g$ of the following forms.
\begin{enumerate} 
	\item[{\bf C1.}] $g(x) = \sum_{i = 1}^K \mu_i g_i(x)$, where
	$\mu_i \ge 0$, 
	\begin{equation} 
	\label{eq:g}
	g_i(x) = \kappa(\NM{A_i x}{2}),
	\end{equation} 
	and $A_i$ is a matrix.
When $\kappa$ is the identity function, $g(x)$ reduces to the convex regularizer $\sum_{i = 1}^K \mu_i
\NM{A_i x}{2}$. By using different $A_i$'s,  $g$ becomes various structured sparsity regularizers such as the group lasso \citep{jacob2009group}, fused lasso \citep{tibshirani2005sparsity}, and graphical lasso \citep{jacob2009group}.
\item[{\bf C2.}] $g(X) = \mu \sum_{i = 1}^m \kappa(\sigma_i(X))$,
where $X$ is a matrix and
$\mu \ge 0$.
	When $\kappa$ is the identity function, 
	$g$ reduces to the nuclear norm.
\end{enumerate} 
First, 
consider $g$ in {\bf C1}. Rewrite each nonconvex $g_i$ in (\ref{eq:g})
as 
\begin{equation} \label{eq:gi}
g_i(x) = \bar{g}_i(x) + \kappa_0 \NM{A_i x}{2}, 
\end{equation} 
where $\kappa_0 =\kappa'(0)$, and
$\bar{g}_i(x) = \kappa(\NM{ A_i x}{2}) - \kappa_0 \NM{A_i x}{2}$.
Obviously,  $\kappa_0 \NM{A_i x}{2}$ 
is convex but nonsmooth.
The following shows that $\bar{g}_i$,
though nonconvex, is concave and Lipschitz smooth.
In the sequel, a function with a bar on top (e.g., $\bar{f}$) denotes that it is smooth; whereas a function with breve (e.g., $\breve{g}$) denotes that it may be nonsmooth.

\begin{proposition} 
\label{prop:smooth}
$\kappa(\NM{z}{2}) - \kappa_0 \NM{z}{2}$ 
is concave and 
$2\rho$-Lipschitz smooth.
\end{proposition}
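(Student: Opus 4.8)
The plan is to set $\phi(\alpha) = \kappa(\alpha) - \kappa_0\alpha$ on $[0,\infty)$, so that the target function is $h(z) = \phi(\NM{z}{2})$, and to prove the two assertions separately. First I would record the elementary facts about $\phi$. Since $\kappa(0)=0$ we have $\phi(0)=0$; since $\kappa$ is concave its derivative $\kappa'$ is non-increasing, so $\phi'(\alpha) = \kappa'(\alpha)-\kappa'(0)\le 0$, i.e. $\phi$ is non-increasing; and $\phi$ is concave, being the sum of the concave $\kappa$ and a linear term. Moreover the $\rho$-Lipschitz smoothness of $\kappa$ gives both $|\phi'(\alpha)| = |\kappa'(\alpha)-\kappa'(0)| \le \rho\alpha$ and $|\phi''(\alpha)| = |\kappa''(\alpha)| \le \rho$ wherever $\kappa''$ exists. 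These four facts are the only properties of $\kappa$ I expect to need.

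For concavity of $h$, I would invoke that a non-increasing concave scalar function composed with a convex function is concave. Concretely, for $z,w$ and $\lambda\in[0,1]$, convexity of the norm gives $\NM{\lambda z + (1-\lambda)w}{2} \le \lambda\NM{z}{2} + (1-\lambda)\NM{w}{2}$; applying the non-increasing $\phi$ reverses the inequality, and concavity of $\phi$ then gives $\phi\!\left(\lambda\NM{z}{2}+(1-\lambda)\NM{w}{2}\right) \ge \lambda\phi(\NM{z}{2}) + (1-\lambda)\phi(\NM{w}{2})$, which chains to $h(\lambda z+(1-\lambda)w)\ge \lambda h(z)+(1-\lambda)h(w)$.

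For the smoothness bound I would first verify differentiability everywhere. For $z\neq 0$ one has $\nabla h(z) = \phi'(\NM{z}{2})\,z/\NM{z}{2}$; at the origin the estimate $\NM{\nabla h(z)}{2} = |\phi'(\NM{z}{2})|\le \rho\NM{z}{2}$, together with $|\phi(\NM{z}{2})|\le \tfrac{\rho}{2}\NM{z}{2}^2$ (from integrating $|\phi'|\le\rho\alpha$), shows $h$ is differentiable with $\nabla h(0)=0$ and $\nabla h$ continuous. Then at any point where $\kappa''$ exists, writing $r=\NM{z}{2}$ and letting $P = zz^\top/r^2$, a direct computation gives $\nabla^2 h(z) = \phi''(r)P + \tfrac{\phi'(r)}{r}(I-P)$. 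Since $P$ and $I-P$ are orthogonal projections, bounding the two terms by the triangle inequality and using $|\phi''(r)|\le\rho$ and $|\phi'(r)|/r\le\rho$ yields that the spectral norm of $\nabla^2 h(z)$ is at most $2\rho$. I would then upgrade this pointwise bound to the global $2\rho$-Lipschitz bound on $\nabla h$ by integrating along the segment joining any two points.

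The main obstacle is exactly this last step: because $\kappa'$ is only assumed non-differentiable at finitely many points, $h$ is not $C^2$, and the Hessian bound holds only off a measure-zero set (the finitely many spheres where $\kappa''$ fails to exist, plus the origin). I would handle this by arguing that $\nabla h$ is locally Lipschitz — it is $C^1$ away from the origin and the bad spheres, continuous across them, and controlled near $0$ by $\NM{\nabla h(z)}{2}\le\rho\NM{z}{2}$ — hence absolutely continuous along any segment. The fundamental theorem of calculus then gives $\nabla h(z)-\nabla h(w) = \int_0^1 \nabla^2 h(w+t(z-w))(z-w)\,dt$ with the integrand defined almost everywhere and bounded in norm by $2\rho\NM{z-w}{2}$, from which the claimed $2\rho$-Lipschitz smoothness follows.
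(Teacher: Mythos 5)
Your proof is correct, and its concavity half coincides with the paper's: both compose the non-increasing concave scalar function $\phi(\alpha)=\kappa(\alpha)-\kappa_0\alpha$ with the convex norm (you prove the composition rule inline; the paper cites it as a lemma). The smoothness half takes a genuinely different route. The paper argues coordinate-wise: it computes the partial derivative $\nabla_{z_i} h(z)$, restricts it to a line in the $i$-th coordinate direction, bounds the resulting one-variable derivative (in effect the diagonal Hessian entry) by $2\rho$ on each interval where $\kappa''$ exists, patches the intervals together, and then sums the coordinate-wise bounds to conclude $\NM{\nabla h(x)-\nabla h(y)}{2}\le 2\rho\NM{x-y}{2}$. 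You instead bound the full Hessian: off the finitely many bad spheres and the origin, $\nabla^2 h(z)=\phi''(r)P+\frac{\phi'(r)}{r}(I-P)$ with $P=zz^\top/r^2$, whose spectral norm is at most $2\rho$, and you recover the global bound by integrating $\nabla^2 h$ along segments, using continuity of $\nabla h$ and the fact that a segment meets the bad set in only finitely many points. Your route is the more robust one: the paper's final summation step implicitly needs $|\nabla_{z_i}h(x)-\nabla_{z_i}h(y)|\le 2\rho\,|x_i-y_i|$ for points $x,y$ that differ in \emph{all} coordinates, which is stronger than the single-direction Lipschitz property its lemma actually establishes (a partial derivative of $h$ depends on the other coordinates too, so the coordinate-wise bound does not transfer as stated); your spectral-norm-plus-integration argument closes exactly that kind of gap, at the modest cost of the absolute-continuity bookkeeping you describe. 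Both proofs treat the origin separately (you via the $o(\NM{z}{2})$ estimate giving $\nabla h(0)=0$, the paper via directional limits) and both use the same two ingredients $|\kappa''|\le\rho$ and $|\kappa'(\alpha)-\kappa_0|\le\rho\alpha$.
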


\begin{corollary} 
\label{cor:smooth}
$\bar{g}_i$ is concave and Lipschitz smooth with modulus
$\bar{L}_i = 2 \rho \NM{A_i}{F}$.
\end{corollary}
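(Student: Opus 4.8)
The plan is to recognize that $\bar{g}_i$ is exactly the radial function of Proposition~\ref{prop:smooth} precomposed with the linear map $x \mapsto A_i x$, and then to transport concavity and smoothness through this composition. Writing $h(z) = \kappa(\NM{z}{2}) - \kappa_0 \NM{z}{2}$, the definition of $\bar{g}_i$ gives $\bar{g}_i(x) = h(A_i x)$. Proposition~\ref{prop:smooth} already supplies the two hard facts about $h$, namely that it is concave and $2\rho$-Lipschitz smooth, having absorbed the nonsmoothness of $\kappa(\NM{\cdot}{2})$ and $\kappa_0\NM{\cdot}{2}$ at the origin into a single globally differentiable $h$. Thus the corollary reduces to tracking how these properties, and in particular the smoothness modulus, behave under the linear map $A_i$.

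For concavity I would invoke the standard fact that a concave function composed with a linear map stays concave. For any $x, y$ and $\lambda \in [0,1]$, linearity of $A_i$ gives $A_i(\lambda x + (1 - \lambda) y) = \lambda A_i x + (1 - \lambda) A_i y$, and concavity of $h$ evaluated at $A_i x$ and $A_i y$ yields $\bar{g}_i(\lambda x + (1 - \lambda) y) \ge \lambda \bar{g}_i(x) + (1 - \lambda) \bar{g}_i(y)$. Hence $\bar{g}_i$ is concave. This step is routine precisely because the delicate cancellation at the origin was already handled in Proposition~\ref{prop:smooth}.

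For the Lipschitz-smooth part, since $h$ is differentiable everywhere the chain rule gives $\nabla \bar{g}_i(x) = A_i^{\top} \nabla h(A_i x)$, so $\bar{g}_i$ is differentiable. To bound the modulus, for any $x, y$ I would estimate
\begin{align*}
\NM{\nabla \bar{g}_i(x) - \nabla \bar{g}_i(y)}{2}
&= \NM{A_i^{\top} \left( \nabla h(A_i x) - \nabla h(A_i y) \right)}{2} \\
&\le \NM{A_i}{2} \cdot 2\rho \NM{A_i x - A_i y}{2} \\
&\le 2\rho \NM{A_i}{2}^2 \NM{x - y}{2},
\end{align*}
using $\NM{A_i^{\top}}{2} = \NM{A_i}{2}$ (the spectral norm of $A_i$), the $2\rho$-Lipschitz bound from Proposition~\ref{prop:smooth}, and then $\NM{A_i(x - y)}{2} \le \NM{A_i}{2}\NM{x - y}{2}$. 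Finally, dominating the spectral norm by the Frobenius norm, $\NM{A_i}{2} \le \NM{A_i}{F}$, gives the modulus in terms of $\NM{A_i}{F}$.

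The only point to watch is exactly how the factor $A_i$ enters: it appears twice, once through the outer $A_i^{\top}$ in the gradient and once through $A_i$ inside the argument of $\nabla h$, so the natural bound I obtain is $2\rho \NM{A_i}{2}^2 \le 2\rho \NM{A_i}{F}^2$. I therefore expect the stated modulus $\bar{L}_i = 2\rho\NM{A_i}{F}$ to carry a square, i.e. $\bar{L}_i = 2\rho\NM{A_i}{F}^2$; reconciling the written constant with this computation is the main bookkeeping subtlety, since concavity and differentiability follow mechanically from Proposition~\ref{prop:smooth}.
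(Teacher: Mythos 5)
Your proposal follows essentially the same route as the paper's proof: write $\bar{g}_i = h \circ A_i$ with $h(z) = \kappa(\NM{z}{2}) - \kappa_0 \NM{z}{2}$, pull concavity through the linear map, and transfer the $2\rho$-smoothness of $h$ (Proposition~\ref{prop:smooth}) via the chain rule. The difference lies in the bookkeeping of the constant, and here your version is the correct one. The paper's proof only establishes
\begin{align*}
\NM{\nabla h(A_i x) - \nabla h(A_i y)}{2}^2 \le 4\rho^2 \NM{A_i x - A_i y}{2}^2 \le 4\rho^2 \NM{A_i}{F}^2 \NM{x-y}{2}^2,
\end{align*}
i.e.\ it treats $\nabla h(A_i x)$ as though it were the gradient of $\bar{g}_i$; but the chain rule gives $\nabla \bar{g}_i(x) = A_i^{\top} \nabla h(A_i x)$, so the outer $A_i^{\top}$ contributes an extra factor $\NM{A_i^{\top}}{2} = \NM{A_i}{2}$, exactly as you note. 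The correct modulus is therefore $2\rho\NM{A_i}{2}^2 \le 2\rho\NM{A_i}{F}^2$, and your bound is in fact tighter than a purely Frobenius-norm version because you keep the spectral norm until the final domination step. A one-dimensional sanity check confirms the square: with $A_i = a \in \R$, one has $\bar{g}_i''(x) = a^2 h''(ax)$, which scales as $a^2$, not $|a|$, so the stated $\bar{L}_i = 2\rho\NM{A_i}{F}$ cannot be right for $\NM{A_i}{2} > 1$ (it happens to coincide with the correct value in the special case $A_i = \Diag{e_i}$ of Remark~\ref{remark:c2}, where both norms equal one). The discrepancy only rescales the constant: concavity and Lipschitz smoothness of $\bar{g}_i$, and hence every downstream result that merely requires some finite smoothness modulus, are unaffected, though any stepsize calibrated to $\bar{L}_i$ in the algorithms should use the squared constant.
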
 

\begin{corollary} \label{cor:c1}
$g(x)$ can be decomposed as
$\bar{g}(x) + \breve{g}(x)$, where
$\bar{g}(x) \equiv \sum_{i = 1}^K \mu_i\bar{g}_i(x)$  is concave
and Lipschitz-smooth, while
$\breve{g}(x) \equiv \kappa_0 \sum_{i = 1}^K \mu_i \NM{A_i x}{2}$
is convex but nonsmooth.
\end{corollary}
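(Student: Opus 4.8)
The plan is to obtain the decomposition directly by summing the per-term identity \eqref{eq:gi} over $i$ and then reading off the two required properties from Corollary~\ref{cor:smooth}. First I would substitute \eqref{eq:gi} into the definition $g(x) = \sum_{i = 1}^K \mu_i g_i(x)$ and regroup the summands into a smooth part and a nonsmooth part:
\begin{align*}
g(x) = \sum_{i = 1}^K \mu_i \bar{g}_i(x) + \kappa_0 \sum_{i = 1}^K \mu_i \NM{A_i x}{2} = \bar{g}(x) + \breve{g}(x),
\end{align*}
which immediately matches the claimed forms of $\bar{g}$ and $\breve{g}$.

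Next I would verify the stated properties of each piece. Corollary~\ref{cor:smooth} already supplies that every $\bar{g}_i$ is concave and Lipschitz smooth with modulus $\bar{L}_i = 2\rho \NM{A_i}{F}$. Since each $\mu_i \ge 0$, the finite nonnegative combination $\bar{g} = \sum_i \mu_i \bar{g}_i$ is again concave, and its gradient $\nabla \bar{g} = \sum_i \mu_i \nabla \bar{g}_i$ inherits Lipschitz continuity with modulus at most $\sum_{i = 1}^K \mu_i \bar{L}_i = 2\rho \sum_{i = 1}^K \mu_i \NM{A_i}{F}$ by applying the triangle inequality to the per-term Lipschitz bounds. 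For $\breve{g}$, each map $x \mapsto \NM{A_i x}{2}$ is convex, being the composition of the convex $\ell_2$-norm with a linear map; and since $\kappa$ is non-decreasing and differentiable we have $\kappa_0 = \kappa'(0) \ge 0$, so $\breve{g}$ is a nonnegative combination of convex functions and hence convex. Its nonsmoothness is witnessed at any $x$ for which some $A_i x = 0$.

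Because all of the substantive analytic content is already carried by Proposition~\ref{prop:smooth} and Corollary~\ref{cor:smooth}, this statement is essentially a bookkeeping corollary and I do not expect a genuine obstacle. The only two points deserving care are (i) confirming that Lipschitz smoothness survives the finite nonnegative combination, with the additive modulus recorded above, and (ii) noting that $\kappa_0 \ge 0$ — which follows from $\kappa$ being non-decreasing — so that $\breve{g}$ is genuinely convex rather than merely an arbitrary signed combination of norms.
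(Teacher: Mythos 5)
Your proposal is correct and follows essentially the same route as the paper: the paper likewise sums the per-term decomposition \eqref{eq:gi}, invokes Corollary~\ref{cor:smooth} for the concavity and Lipschitz smoothness of each $\bar{g}_i$ (hence of the nonnegative combination $\bar{g}$), and treats the convexity and nonsmoothness of $\breve{g}$ as immediate. You merely spell out details the paper leaves implicit, such as the additive Lipschitz modulus $2\rho\sum_{i=1}^K \mu_i \NM{A_i}{F}$ and the observation that $\kappa_0 \ge 0$ guarantees $\breve{g}$ is convex.
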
 

\begin{remark} \label{remark:c2}
When
$A_i = \Diag{e_i}$, where $e_i$ is the unit vector for dimension $i$, $\NM{A_i x}{2} = |x_i|$ and 
\begin{equation} \label{eq:c2}
g(x) = \sum_{i = 1}^d \mu_i \kappa(\NM{A_i x}{2}) = \sum_{i = 1}^d \mu_i \kappa(|x_i|). 
\end{equation} 
Using Corollary~\ref{cor:c1},
	$g$ can be decomposed as $\bar{g}(x)+\breve{g}(x)$, where 
	$\bar{g}(x) \equiv  \sum_{i = 1}^d \mu_i ( \kappa(|x_i|) - \kappa_0 |x_i| ) $
	is concave and $2\rho$-Lipschitz smooth,
	while $\breve{g}(x) \equiv \kappa_0 \sum_{i = 1}^d \mu_i |x_i|$ is convex and nonsmooth.
When $d=1$ and $\mu_1=1$,
an illustration
of $g(x)=\kappa(|x|)$, 
$\bar{g}(x)=\kappa(|x|) - \kappa_0 |x|$ 
and 
$\breve{g}(x)=\kappa_0 |x|$ 
for the various nonconvex regularizers
is shown in 
Figure~\ref{fig:deompreg}.
When $\kappa$ is the identity function and $\mu_1 = \dots = \mu_m = \mu$, $g$ in
(\ref{eq:c2}) reduces to the lasso regularizer $\mu \NM{x}{1}$.
\end{remark} 

\begin{figure*}[t]
	\centering
	\subfigure[GP.]
	{\includegraphics[width = 0.32\textwidth]{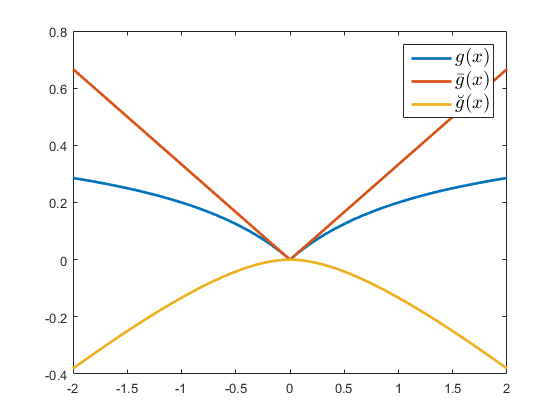}}
	\subfigure[LSP.]
	{\includegraphics[width = 0.32\textwidth]{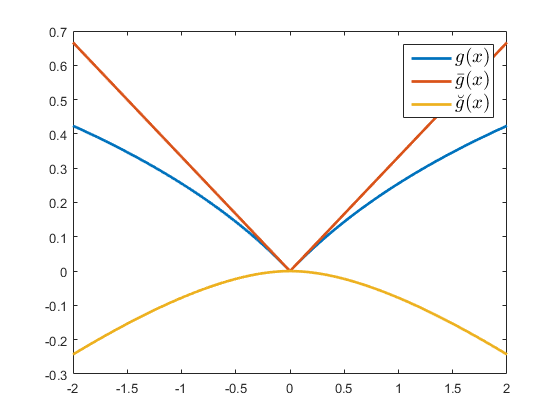}}
	\subfigure[MCP.]
	{\includegraphics[width = 0.32\textwidth]{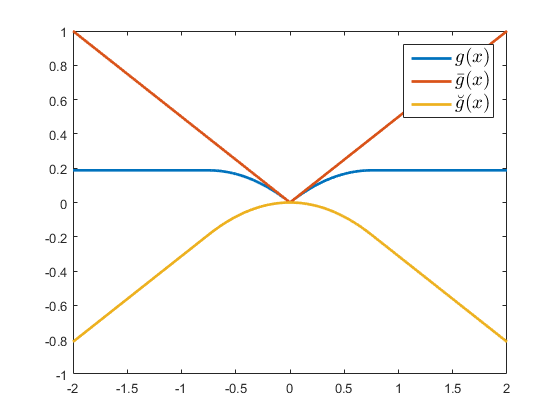}}
	\subfigure[Laplace.]
	{\includegraphics[width = 0.32\textwidth]{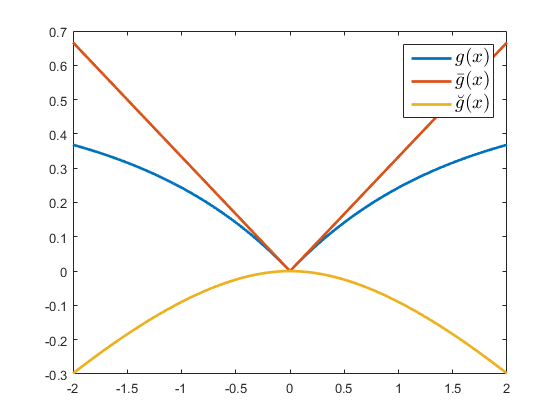}}
	\subfigure[SCAD.]
	{\includegraphics[width = 0.32\textwidth]{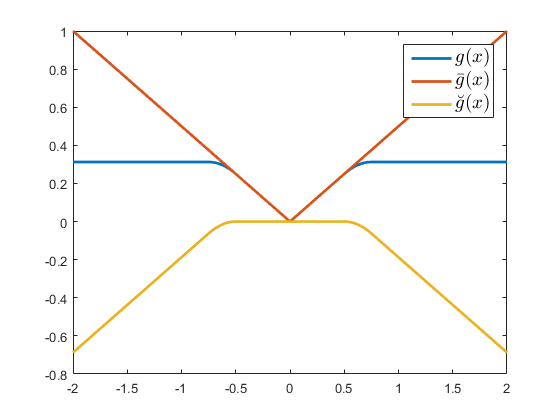}}
	\caption{Decompositions of the regularizers in Table~\ref{tab:regdecomp} for one-dimensional $z$
		($\beta = 0.5$, $\theta = 1.5$).}
	\label{fig:deompreg}
\end{figure*}

Using Corollary~\ref{cor:c1}, problem~\eqref{eq:pro} can then be rewritten as
\begin{equation} \label{eq:equiv}
\min_{x} \bar{f}(x) + \breve{g}(x),
\end{equation}
where $\bar{f}(x) \equiv f(x) + \bar{g}(x)$.
Note that $\bar{f}$ (which can be viewed as an augmented loss) is 
Lipschitz smooth
while $\breve{g}$ (viewed as a convexified regularizer) is convex but possibly nonsmooth.
In other words, nonconvexity is shifted from the regularizer $g$ to the loss $f$, 
while ensuring that the augmented loss is smooth. 


When $X$ is a matrix,
similar to Corollary~\ref{cor:c1},
the following Proposition~\ref{prop:c3} holds for $g$ in {\bf C2}.

\begin{proposition} \label{prop:c3}
Any $g$ in {\bf C2}
can be decomposed as $\bar{g}(X)+\breve{g}(X)$, where 
\begin{equation} \label{eq:barg}
\bar{g}(X) \equiv \mu \sum_{i = 1}^m \kappa(\sigma_i(X)) - \mu \kappa_0 \NM{X}{*}
\end{equation} 
is concave and $2\rho$-Lipschitz smooth, 
while $\breve{g}(X) \equiv \kappa_0 \NM{X}{*}$ is convex and nonsmooth.
\end{proposition}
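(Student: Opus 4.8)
The plan is to get the decomposition for free and then concentrate on the two structural claims about $\bar g$, reducing everything to properties of the scalar profile $\eta(t)\equiv\kappa(t)-\kappa_0 t$ on $[0,\infty)$. The decomposition $g(X)=\bar g(X)+\breve g(X)$ holds by definition once we write $g(X)=\mu\sum_i\kappa(\sigma_i(X))=\mu\sum_i\bigl(\kappa(\sigma_i(X))-\kappa_0\sigma_i(X)\bigr)+\mu\kappa_0\NM{X}{*}$ and recall $\NM{X}{*}=\sum_i\sigma_i(X)$. That $\breve g$ is convex and nonsmooth is immediate: it is a nonnegative multiple ($\kappa_0=\kappa'(0)\ge 0$ since $\kappa$ is non-decreasing) of the nuclear norm, which is a norm, hence convex and nonsmooth at rank-deficient matrices. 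So all the content is in showing that $\bar g(X)=\mu\sum_i\eta(\sigma_i(X))$ is concave and Lipschitz smooth.

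First I would record the elementary facts about $\eta$. Because $\kappa$ is concave, $\kappa'$ is non-increasing with $\kappa'(0)=\kappa_0$, so $\eta'(t)=\kappa'(t)-\kappa_0\le 0$: thus $\eta$ is concave (a concave minus a linear function), non-increasing, with $\eta(0)=0$ and, crucially, $\eta'(0)=0$. Moreover, $\rho$-Lipschitz smoothness of $\kappa$ gives that $\eta'$ is $\rho$-Lipschitz. These are the only properties I will use. For concavity I would pass to the associated absolutely symmetric function $f(y)\equiv\sum_{i=1}^m\eta(|y_i|)$ on $\R^m$, so that $\bar g(X)=\mu\,(f\circ\sigma)(X)$. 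Each summand $\eta(|y_i|)$ is concave, being the composition of the concave non-increasing $\eta$ with the convex map $y_i\mapsto|y_i|$ (concave-non-increasing of convex is concave), so $f$ is concave. By the theory of unitarily invariant matrix functions---a unitarily invariant $f\circ\sigma$ is convex iff the absolutely symmetric $f$ is convex, applied to $-f$---the map $f\circ\sigma$ is concave iff $f$ is, giving concavity of $\bar g$.

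I would emphasize that subtracting $\kappa_0\NM{X}{*}$ is \emph{essential} here: without it the relevant profile $\kappa$ is increasing, and $\sum_i\kappa(\sigma_i)$ need not be concave (e.g.\ $\kappa(t)=t$ yields the convex nuclear norm), so one genuinely needs the non-increasing property of $\eta$ produced by the shift. Differentiability follows from the same correspondence: $f\circ\sigma$ is differentiable exactly where $f$ is, and since $\eta'(0)=0$ smooths out the kink of $|\cdot|$ at the origin, $f$---and hence $\bar g$---is differentiable everywhere, with $\nabla\bar g(X)=\mu\,U\Diag{(\eta'(\sigma_i(X)))_i}V^{\top}$ for any SVD $X=U\Sigma V^{\top}$.

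The main obstacle is then the gradient-Lipschitz estimate $\NM{\nabla\bar g(X)-\nabla\bar g(Y)}{F}\le 2\rho\mu\NM{X-Y}{F}$ (matching the $2\rho$ modulus of Proposition~\ref{prop:smooth} up to the overall scaling $\mu$). Unlike the vector case of Proposition~\ref{prop:smooth}, the singular vectors may rotate and singular values may coincide or vanish, so one cannot differentiate naively. I would control this by viewing $X\mapsto U\Diag{(\eta'(\sigma_i))}V^{\top}$ as the spectral (operator) map induced by the odd, $\rho$-Lipschitz function $\eta'$ with $\eta'(0)=0$, and invoke the Lipschitz property of such spectral maps, the factor $2\rho$ arising exactly as in the scalar computation behind Proposition~\ref{prop:smooth}. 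This last step is where the real work lies; everything preceding it is bookkeeping on $\eta$ together with an appeal to the spectral-function correspondence.
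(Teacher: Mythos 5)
Your proposal is correct and follows essentially the same route as the paper's proof: reduce $\bar g$ to the absolutely symmetric, separable vector function $y \mapsto \mu\sum_{i=1}^m \left(\kappa(|y_i|) - \kappa_0 |y_i|\right)$, obtain its concavity and $2\rho$-Lipschitz smoothness from the scalar argument behind Proposition~\ref{prop:smooth}, and transfer both properties to $\bar g = f\circ\sigma$ via the Lewis-type correspondence for spectral functions of singular values. The paper likewise cites rather than proves the transfer results (its two lemmas from Lewis's nonsmooth analysis of singular values, one for concavity and one for Lipschitz smoothness of the gradient), so invoking ``the Lipschitz property of spectral maps'' for the final step matches the paper's level of rigor.
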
 

Since $\bar{g}$ is concave and $\breve{g}$ is convex,
the nonconvex regularizer $g
=
\breve{g}
-(-\bar{g})
$
can be viewed as
a difference of convex functions (DC) 
\citep{hiriart85}.
\cite{zhaosong2012,gongZLHY2013,zhongK2014gdpan}
also relied on DC decompositions
of the nonconvex regularizer.
However, they   do not utilize this in the computational procedures, while
we use the DC decomposition to simplify the regularizers.
As will be seen, though the DC decomposition of a nonconvex function is not unique in
general,
the particular one proposed here is crucial for
efficient optimization.


\section{Example Use Cases}
\label{sec:examples}

In this section, 
we provide concrete examples to show how the proposed convexification scheme can be 
used with various optimization algorithms.
An overview is summarized in Table~\ref{tab:advantages}.

\begin{table}[ht]
\centering
\caption{Using the proposed convexification scheme with various algorithms.}
\begin{tabular}{ c | c | c} \hline
& section                                   & advantages                                                  \\ \hline 
proximal algorithm & \ref{sec:withprox}, \ref{sec:nonsthmloss} & cheaper                               proximal step \\
FW algorithm       & \ref{sec:noncvxFW}                        & cheaper                          linear subproblem \\
	(consensus)   ADMM & \ref{sec:ADMM}                            & 
	                   cheaper                               proximal step; provide convergence guarantee                                   \\
	SVRG               & \ref{sec:svrg}                            & 
	                   cheaper                               proximal step; provide convergence guarantee                                   \\
	mOWL-QN            & \ref{sec:quasiNT}                         & simpler analysis; 
	capture curvature information \\ \hline
\end{tabular}
\label{tab:advantages}
\end{table}


\subsection{Proximal Algorithms}
\label{sec:withprox}

In this section, we provide example applications on using the
proximal algorithm for nonconvex
structured sparse learning. 
The proximal algorithm 
has been commonly used for learning with convex regularizers
\citep{parikh2013proximal}.
With a nonconvex regularizer, 
the underlying proximal step becomes much more challenging.
\cite{gongZLHY2013,li2015accelerated} and \cite{boct2016inertial}
extended proximal algorithm to simple nonconvex $g$, but
cannot handle more complicated nonconvex regularizers
such as the tree-structured lasso regularizer \citep{liu2010moreau,schmidt2011convergence}, 
sparse group lasso regularizer \citep{jacob2009group} and total variation regularizer \citep{nikolova2004variational}.
Using the proximal average \citep{bauschke2008proximal},
\cite{zhongK2014gdpan} 
can handle nonconvex regularizers of the form $g = \sum_{i = 1}^K \mu_i g_i$, where each $g_i$ is simple.
However, the solutions obtained are only approximate.
General nonconvex optimization techniques such as
the concave-convex procedure (CCCP) \citep{yuille2002concave} 
or its variant sequential convex programming (SCP) \citep{zhaosong2012}
can also be used, though
they are slow in general \citep{gongZLHY2013,zhongK2014gdpan}.

Using the proposed transformation, 
one only needs to solve the proximal step of a standard convex regularizer
instead of that of a nonconvex regularizer.
This allows reuse of existing solutions for the proximal step and is much less expensive.
As proximal algorithms have the same convergence guarantee for 
convex and nonconvex 
$f$ 
\citep{gongZLHY2013,li2015accelerated},
solving the transformed problem
can be much faster. The following gives some specific examples.


\subsubsection{Nonconvex Sparse Group Lasso}
\label{sec:gpslasso}

In sparse group lasso,
the feature vector $x$ is divided into groups. Assume that group
$\mathcal{G}_j$ contains dimensions in $x$
that group $j$ contains. Let
$\left[x_{\mathcal{G}_j}\right]_i = x_i$ if $i \in \mathcal{G}_j$, 
and 0 otherwise.
Given training samples $\{(a_1,y_1),\dots, (a_N,y_N) \}$,
(convex) sparse group lasso is formulated as 
\citep{jacob2009group}:
\begin{align}
\min_{x} \sum_{i = 1}^N \ell( y_i, a_i^{\top} x ) 
+ \lambda \NM{x}{1}
+ \sum_{j = 1}^{K} \mu_j \NM{x_{\mathcal{G}_j}}{2},
\label{eq:sglasso}
\end{align}
where 
$\ell$ is
a smooth loss, and $K$ is the number of 
(non-overlapping)
groups.

For the nonconvex extension, the regularizer becomes
\begin{equation} \label{eq:noncvx_gp}
g(x) = \lambda \sum_{i = 1}^d \kappa(|x_i|) + \sum_{j = 1}^{K}\mu_j \kappa( \NM{x_{\mathcal{G}_j}}{2} ).
\end{equation}
Using Corollary~\ref{cor:c1} and Remark~\ref{remark:c2}, 
the  convexified regularizer is  
$\breve{g}(x) = \kappa_0 ( \lambda \NM{x}{1} + \sum_{j = 1}^{K} \mu_j \NM{x_{\mathcal{G}_j}}{2} )$.
Its proximal step can be easily computed by the algorithm in 
\citep{yuan2011efficient}. Specifically,
the proximal operator of $\breve{g}$
can be obtained 
by computing
$\Px{\mu_j \NM{\cdot}{2}}{\Px{\lambda \NM{\cdot}{1}}{x_{\mathcal{G}_j}}}$
for each group separately.
This  can then be used with
any proximal algorithm that can handle nonconvex objectives
(as $\bar{f}$ is nonconvex).
In particular, we will adopt the state-of-the-art nonmontonic APG (nmAPG) algorithm
\citep{li2015accelerated} (shown in 
Algorithm~\ref{alg:nmAPG}).  Note that nmAPG cannot be directly used with the nonconvex regularizer $g$ in (\ref{eq:noncvx_gp}),
as the corresponding proximal step has no inexpensive closed-form solution.

\begin{algorithm}[ht]
\caption{Nonmonotonic APG (nmAPG) \citep{li2015accelerated}.}
	\begin{algorithmic}[1]
		\STATE Initialize $z_1 = x_1 = x_0$, $\alpha_0 = 0$, $\alpha_1 = 1$, $\eta \in [0,1)$, $c_1 = F(x_1)$, $q_1 = 1$,
		and stepsize $\tau > \bar{L}$, $\delta \in (0, \tau - \bar{L})$;
		\FOR{$t = 1, \dots, T$}
		\STATE $y_t = x_t + \frac{\alpha_{t - 1}}{\alpha_t}(z_t - x_t)
		+ \frac{\alpha_{t - 1} - 1}{\alpha_t}(x_t - x_{t - 1})$;
		\STATE $z_{t + 1} = \Px{\frac{1}{\tau}\breve{g}}{y_t - \frac{1}{\tau} \nabla \bar{f}(y_t)}$;
		\IF{$F(z_{t + 1}) \le c_t - \frac{\delta}{2}\NM{z_{t + 1} - y_t}{2}^2$}
		\STATE $x_{t + 1} = z_{t + 1}$;
		\ELSE
		\STATE $v_{t + 1} = \Px{\frac{1}{\tau}\breve{g}}{x_t - \frac{1}{\tau} \nabla \bar{f}(x_t)}$;
		\STATE $x_{t + 1} = 
		\begin{cases}
		z_{t + 1} & F(z_{t + 1}) \le F(v_{t + 1}) \\
		v_{t + 1} & \text{otherwise}
		\end{cases}$;
		\ENDIF
		\STATE $\alpha_{t + 1} = \frac{1}{2}(\sqrt{4 \alpha_t^2 + 1} + 1 )$;
		\STATE $q_{t + 1} = \eta q_t + 1$;
		\STATE $c_{t + 1} = \frac{\eta q_t c_t + F(x_{t + 1})}{q_{t + 1}}$;
		\ENDFOR
		\RETURN $x_{T + 1}$; 
	\end{algorithmic}
	\label{alg:nmAPG}
\end{algorithm}

As mentioned in Section~\ref{sec:shift}, the proposed decomposition of the nonconvex regularizer $g$ can
be regarded as a 
DC decomposition, 
which is not unique in general.
For example, we might try to add a quadratic term to convexify the nonconvex regularizer.
Specifically, we can 
decompose $g(x)$ in (\ref{eq:noncvx_gp})
as $\tilde{\varsigma}(x) + \hat{\varsigma}(x)$, where
\begin{equation}
\label{eq:spgupDCg1} 
\tilde{\varsigma}(x) 
= \lambda \sum_{i = 1}^d \left(\kappa(|x_i|) + \frac{\rho}{2} x_i^2 \right)
+ \sum_{j = 1}^{K}\mu_j \left( \kappa( \NM{x_{\mathcal{G}_j}}{2} ) + \frac{\rho}{2}
\NM{x_{\mathcal{G}_j}}{2}^2 \right),
\end{equation}
and
$\hat{\varsigma}(x) = - \frac{\rho}{2} \sum_{j = 1}^{K} (\mu_j + \lambda) \NM{x_{\mathcal{G}_j}}{2}^2$.
It can be easily shown that
$\hat{\varsigma}$ is concave, and
Proposition~\ref{pr:anotherDC} shows that
$\tilde{\varsigma}$ is convex.
Thus, $F$ can be transformed as $F(x) = \bar{f}(x) + \tilde{\varsigma}(x)$,
where $\bar{f}(x) = f(x) + \hat{\varsigma}(x)$ is Lipschitz-smooth, and $\tilde{\varsigma}$ is convex but nonsmooth.
However, the proximal step associated with $\tilde{\varsigma}$ has no simple closed-form solution.

\begin{proposition} 
\label{pr:anotherDC}
$\kappa(\NM{\cdot}{2}) + \frac{\rho}{2} \NM{\cdot}{2}^2$ is convex.
\end{proposition}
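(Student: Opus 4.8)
The plan is to reduce the statement to a one-dimensional convexity fact about $\kappa$ composed with the norm. First I would introduce the scalar function $\psi(\alpha) \equiv \kappa(\alpha) + \frac{\rho}{2}\alpha^2$ on $[0,\infty)$, so that the target function is exactly the composition $\psi(\NM{\cdot}{2})$. The argument then splits cleanly into (a) establishing that $\psi$ is both convex and non-decreasing, and (b) invoking the rule that a convex non-decreasing scalar function composed with a (convex) norm is convex.

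For step (a), I would handle convexity through the monotonicity of $\psi'$ rather than through $\psi''$, so as to avoid worrying about the finitely many points where $\kappa'$ is non-differentiable. Since $\kappa$ is $\rho$-Lipschitz smooth, its derivative $\kappa'$ exists and is $\rho$-Lipschitz, so for $0 \le \alpha_1 \le \alpha_2$ we have $\kappa'(\alpha_2) - \kappa'(\alpha_1) \ge -\rho(\alpha_2 - \alpha_1)$; hence $\psi'(\alpha_2) - \psi'(\alpha_1) = [\kappa'(\alpha_2) - \kappa'(\alpha_1)] + \rho(\alpha_2 - \alpha_1) \ge 0$, i.e. $\psi'$ is non-decreasing and $\psi$ is convex. (Equivalently, wherever $\kappa''$ exists one has $\psi'' = \kappa'' + \rho \ge 0$, using $-\rho \le \kappa'' \le 0$ from concavity and $\rho$-Lipschitz smoothness, but the monotone-derivative phrasing is more robust.) For monotonicity I would note $\psi'(\alpha) = \kappa'(\alpha) + \rho\alpha \ge 0$ for $\alpha \ge 0$, since $\kappa$ is non-decreasing (so $\kappa' \ge 0$) and $\rho\alpha \ge 0$.

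For step (b) I would give the direct Jensen-style argument rather than quote a composition theorem, to sidestep domain and extended-value subtleties. Given $z_1, z_2$ and $\lambda \in [0,1]$, convexity of the norm yields $\NM{\lambda z_1 + (1-\lambda)z_2}{2} \le \lambda\NM{z_1}{2} + (1-\lambda)\NM{z_2}{2}$; applying $\psi$ and using its monotonicity gives $\psi(\NM{\lambda z_1 + (1-\lambda)z_2}{2}) \le \psi(\lambda\NM{z_1}{2} + (1-\lambda)\NM{z_2}{2})$, and then convexity of $\psi$ gives this is $\le \lambda\,\psi(\NM{z_1}{2}) + (1-\lambda)\,\psi(\NM{z_2}{2})$. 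Chaining these inequalities is exactly the convexity of $\psi(\NM{\cdot}{2})$.

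The only delicate point is that monotonicity of $\psi$ is genuinely needed in step (b) — it is what lets the triangle inequality for the norm be pushed through $\psi$ — so I would state and prove $\psi' \ge 0$ explicitly rather than treat it as obvious. The non-smoothness of $\kappa'$ at finitely many points is harmless because the $\rho$-Lipschitz bound on $\kappa'$ holds globally, so the monotone-derivative characterization of convexity in step (a) applies without modification.
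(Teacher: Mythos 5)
Your proof is correct and takes essentially the same route as the paper: both reduce to the scalar function $\pi(\alpha)=\kappa(\alpha)+\frac{\rho}{2}\alpha^2$ on $[0,\infty)$, establish that it is convex and non-decreasing by the monotonicity of its derivative (via the $\rho$-Lipschitz bound on $\kappa'$ and $\kappa'\ge 0$), and then compose with the convex norm $\NM{\cdot}{2}$. The only cosmetic difference is that you prove the convex-nondecreasing-composition rule inline with a Jensen-style chain, whereas the paper cites it as a lemma from \citet{boyd2004convex}.
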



\subsubsection{Nonconvex Tree-Structured Group Lasso}
\label{sec:tree}

In (convex) tree-structured group lasso 
\citep{liu2010moreau,jenatton2011proximal},
the dimensions in $x$ are organized
as nodes in a tree, and each group corresponds to a subtree.
The regularizer 
is of the form
$\sum_{j = 1}^{K} \lambda_j \NM{x_{\mathcal{G}_j}}{2}$.
Interested readers are referred to \citep{liu2010moreau} for details.

For the nonconvex extension,
$g(x)$ becomes $\sum_{j = 1}^{K} \lambda_j \kappa (\NM{x_{\mathcal{G}_j}}{2} )$.
Again, there is no closed-form solution of its proximal step.
On the other hand,
the convexified regularizer is 
$\breve{g}(x) \equiv \kappa_0 \sum_{j = 1}^{K}\lambda_j \NM{x_{\mathcal{G}_j}}{2}$.
As shown in 
\citep{liu2010moreau},
its proximal step 
can be computed efficiently by processing all the groups once in some appropriate order.


\subsubsection{Nonconvex Total Variation (TV) Regularizer}
\label{sec:tv}

In an image,
nearby pixels 
are usually strongly correlated.
The TV regularizer captures such behavior by assuming that changes between nearby pixels are small.
Given an image $X \in \R^{m \times n}$, the TV regularizer is defined as
$\text{TV}(X) =  \NM{D_v X}{1} + \NM{X D_h}{1}$
\citep{nikolova2004variational},
$
D_v =
\begin{bmatrix}
-1 & 1     &       &  \\
& \ddots & \ddots &  \\
&       & -1    & 1
\end{bmatrix}
\in \R^{(m - 1) \times m}
$
and 
$
D_h =
\begin{bmatrix}
-1 &       &  \\
1       &  \ddots &  \\
& \ddots & - 1 \\
&       & 1
\end{bmatrix}
\in \R^{n \times (n - 1)}
$
are the horizontal and vertical partial derivative operators, respectively.
Thus, it is popular on image processing problems,
such as image denoising and deconvolution \citep{nikolova2004variational,beck2009fasttv}.

As in previous sections, the nonconvex extension of TV regularizer can be defined as
\begin{equation} \label{eq:noncvx_tv}
\sum_{i = 1}^{m - 1} \sum_{j = 1}^m \kappa\left( \left|\left[ D_v X \right]_{ij}\right|\right)
+ \sum_{i = 1}^{n} \sum_{j = 1}^{n - 1} \kappa\left( \left|\left[ X D_h \right]_{ij}\right|\right).
\end{equation}
Again, it is not clear how its proximal step can be efficiently computed.
However, with the proposed transformation,
the transformed problem is
\[ \min_{X} \bar{f}(X) + \mu \kappa_0 \TV{X}, \]
where $\mu$ is the regularization parameter,
$\bar{f}(X) =  f(X) + \mu \sum_{i = 1}^{m - 1} \sum_{j = 1}^m ( \kappa(
| [ D_v X ]_{ij}|) - \kappa_0 |[ D_v X ]_{ij}|) 
+ \mu \sum_{i = 1}^{n} \sum_{j = 1}^{n - 1} (  \kappa( |[ X
D_h]_{ij}|) - \kappa_0 |[ X D_h ]_{ij}| )$ is concave and Lipschitz smooth.
One then only needs to compute the proximal step of the standard TV regularizer.

However, unlike  the proximal steps in 
Sections~\ref{sec:gpslasso} and \ref{sec:tree}, the proximal step of the TV regularizer has no
closed-form solution and needs to be solved iteratively.
In this case,
\cite{schmidt2011convergence} showed that 
using inexact 
proximal steps can make proximal algorithms faster.
However, they only considered the situation where both $f$ and $g$ are convex.
In the following, we
extend nmAPG (Algorithm~\ref{alg:nmAPG}),
which can be used with
nonconvex objectives, to allow for inexact proximal steps
(steps~5 and 9 of Algorithm~\ref{alg:inexactAPG}).
However, Lemma~2 
of \citep{li2015accelerated}, which is key to the convergence of nmAPG, 
no longer holds dues to inexact proximal step.
To fix this problem,
in step~6 of Algorithm~\ref{alg:inexactAPG},
we use $F(X_t)$ 
instead  of $c_t$ in Algorithm~\ref{alg:nmAPG}.
Besides, we also drop the
comparison of $F(Z_{t + 1})$ and $F(V_{t + 1})$ (originally 
in step~9 of Algorithm~\ref{alg:nmAPG}).

\begin{algorithm}[ht]
\caption{Inexact nmAPG.}
\begin{algorithmic}[1]
	\STATE Initialize $\tilde{Z}_1 = X_1 = X_0$, $\alpha_0 = 0$, $\alpha_1 = 1$ and stepsize $\tau > \bar{L}$, $\delta \in (0, \tau - \bar{L})$; 
	\FOR{$t = 1, \dots, T$}
	\STATE choose tolerance $\epsilon_t$;
	\STATE $Y_t = X_t + \frac{\alpha_{t - 1}}{\alpha_t}(Z_t - X_t) + \frac{\alpha_{t - 1} - 1}{\alpha_t}(X_t - X_{t - 1})$;
	\STATE $\tilde{Z}_{t + 1}$ = approximate $\Px{\frac{1}{\tau}\breve{g}}{Y_t - \frac{1}{\tau} \nabla
	\bar{f}(Y_t)}$, with inexactness $\vartheta_{t + 1} \le \epsilon_t$;
	\IF{$F(\tilde{Z}_{t + 1}) \le F(X_t) - \frac{\delta}{2}\NM{\tilde{Z}_{t + 1} - Y_t}{F}^2$}
	\STATE $X_{t + 1} = \tilde{Z}_{t + 1}$;
	\ELSE
	\STATE $X_{t + 1}$ = approximate $\Px{\frac{1}{\tau}\breve{g}}{X_t - \frac{1}{\tau} \nabla
	\bar{f}(X_t)}$, with inexactness $\vartheta_{t + 1} \le \epsilon_t$;
	\ENDIF
	\STATE $\alpha_{t + 1} = \frac{1}{2}(\sqrt{4 \alpha_t^2 + 1} + 1 )$;
	\ENDFOR
	\RETURN $X_{T + 1}$; 
	\end{algorithmic}
	\label{alg:inexactAPG}
\end{algorithm}

Inexactness of the proximal step can be controlled as follows.
Let 
$P = X - \frac{1}{\tau} \nabla \bar{f}(X)$, and
$h(X) \equiv \frac{1}{2} \NM{X - P}{F}^2 + \frac{1}{\tau} \breve{g}(X)$
be the objective in 
$\Px{\frac{1}{\tau}\breve{g}}{P}$.
As
$\breve{g}(X) = \kappa_0 \TV{X}$ is convex, $h$ is also convex.
Let $\tilde{X}$ be an inexact solution of this proximal step.
The inexactness $h(\tilde{X}) - h(\Px{\frac{1}{\tau} \breve{g}}{P})$ is upper-bounded by the duality gap 
$\vartheta \equiv h(\tilde{X}) - \mathcal{D}(\tilde{W})$,
where $\mathcal{D}$ is the dual of $h$, and $\tilde{W}$ is the corresponding dual variable.
In step~5 
(resp.
step~9)
of Algorithm~\ref{alg:inexactAPG},
we solve the proximal step until its duality gap
$\vartheta_{t + 1}$
is smaller than a given threshold $\epsilon_t$.
The following Theorem shows convergence of Algorithm~\ref{alg:inexactAPG}.


\begin{theorem}
\label{thm:convAPG}
Let
$\sum_{t = 1}^{\infty} \epsilon_t < \infty$.
The sequence $\{ X_t \}$ generated from Algorithm~\ref{alg:inexactAPG}
has at least one limit point,
and every limit point is also a critical point of \eqref{eq:pro}.
\end{theorem}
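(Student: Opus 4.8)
The plan is to follow the standard descent-plus-optimality template for nonconvex proximal methods (as in the nmAPG analysis), but to account carefully for the inexact proximal steps through the duality gap $\vartheta_{t+1} \le \epsilon_t$. Throughout I would use that $F = \bar{f} + \breve{g}$ with $\bar{f}$ being $\bar{L}$-Lipschitz smooth and $\breve{g}$ convex, so that $\partial F(x) = \nabla\bar{f}(x) + \partial\breve{g}(x)$ and $x^*$ is a critical point of \eqref{eq:pro} exactly when $0 \in \nabla\bar{f}(x^*) + \partial\breve{g}(x^*)$. First I would establish one sufficient-decrease inequality valid in both branches of Algorithm~\ref{alg:inexactAPG}. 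Write $R_t = Y_t$ when the test in step~6 succeeds and $R_t = X_t$ otherwise. In the first case the acceptance test gives $F(X_{t+1}) \le F(X_t) - \frac{\delta}{2}\NM{X_{t+1} - R_t}{F}^2$ directly. In the second case, let $h(X) = \frac{1}{2}\NM{X - P}{F}^2 + \frac{1}{\tau}\breve{g}(X)$ with $P = X_t - \frac{1}{\tau}\nabla\bar{f}(X_t)$ and let $X^\star$ be its exact minimizer; since the duality gap controls $h(X_{t+1}) - h(X^\star) \le \epsilon_t$ and $h(X^\star)\le h(X_t)$, combining the descent lemma for the $\bar{L}$-smooth $\bar{f}$ with this bound yields $F(X_{t+1}) \le F(X_t) - \frac{\tau - \bar{L}}{2}\NM{X_{t+1} - X_t}{F}^2 + \tau\epsilon_t$. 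As $\delta < \tau - \bar{L}$, in both branches
\begin{equation}
F(X_{t+1}) \le F(X_t) - \frac{\delta}{2}\NM{X_{t+1} - R_t}{F}^2 + \tau\epsilon_t .
\end{equation}

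From here I would extract two consequences. Since $\sum_t \epsilon_t < \infty$, the recursion gives $\sup_t F(X_t) < \infty$; together with assumption~A1 (coercivity), all $X_t$ lie in a bounded sublevel set, so $\{X_t\}$ is bounded and hence has at least one limit point, which proves the first claim. Telescoping the displayed inequality and again using that $F$ is bounded below and $\sum_t\epsilon_t < \infty$ gives $\sum_t \NM{X_{t+1} - R_t}{F}^2 < \infty$, hence $\NM{X_{t+1} - R_t}{F} \to 0$.

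Next I would convert the inexactness into an approximate optimality condition. Because $\frac{1}{2}\NM{\cdot - P}{F}^2$ is $1$-strongly convex and $\frac{1}{\tau}\breve{g}$ is convex, $h$ is $1$-strongly convex, so $\frac{1}{2}\NM{X_{t+1} - X^\star_{t+1}}{F}^2 \le h(X_{t+1}) - h(X^\star_{t+1}) \le \epsilon_t$, where $X^\star_{t+1} = \Px{\frac{1}{\tau}\breve{g}}{R_t - \frac{1}{\tau}\nabla\bar{f}(R_t)}$ is the exact prox step from $R_t$; thus $\NM{X_{t+1} - X^\star_{t+1}}{F} \le \sqrt{2\epsilon_t} \to 0$. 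The exact step obeys $\tau(R_t - X^\star_{t+1}) - \nabla\bar{f}(R_t) \in \partial\breve{g}(X^\star_{t+1})$. To finish, take any convergent subsequence $X_{t_k+1} \to x^*$: by $\NM{X_{t+1} - R_t}{F} \to 0$ we get $R_{t_k} \to x^*$, and by $\NM{X_{t+1} - X^\star_{t+1}}{F}\to 0$ we get $X^\star_{t_k+1} \to x^*$. Passing to the limit in the optimality condition, using continuity of $\nabla\bar{f}$ and closedness of the graph of $\partial\breve{g}$, the left-hand side tends to $-\nabla\bar{f}(x^*)$, giving $0 \in \nabla\bar{f}(x^*) + \partial\breve{g}(x^*) = \partial F(x^*)$, so every limit point is critical.

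The hard part will be the bookkeeping of the inexactness rather than any single estimate. The acceleration measures decrease against the extrapolated point $Y_t$, not against $X_t$, so the criticality argument must be carried out through the auxiliary sequence $R_t$ and the exact iterate $X^\star_{t+1}$, and I must verify that the single scalar duality gap simultaneously controls the function decrease in the first step and the iterate error $\sqrt{2\epsilon_t}$ via strong convexity. I would also want to check that dropping the $c_t$ averaging and the $F(Z_{t+1})$–$F(V_{t+1})$ comparison of nmAPG (as noted just before the theorem) is precisely what makes the clean sufficient-decrease recursion above valid despite the inexact step.
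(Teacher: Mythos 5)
Your proposal is correct, and its skeleton --- a sufficient-decrease inequality in each branch, telescoping against $\sum_t \epsilon_t < \infty$ together with coercivity (A1) to obtain boundedness, then a subsequence argument --- is the same as the paper's; your second-branch estimate is exactly the paper's Lemma~\ref{lem:funcvale}. The differences lie in how the argument is finished, and there your route is genuinely tighter. The paper keeps the two branches apart, partitions the iteration indices into $\Omega_1(T)$ and $\Omega_2(T)$, and runs a three-case analysis according to whether each set is finite or infinite; your auxiliary sequence $R_t$ and the unified recursion $F(X_{t+1}) \le F(X_t) - \frac{\delta}{2}\NM{X_{t+1}-R_t}{F}^2 + \tau\epsilon_t$ (valid because $\delta < \tau-\bar{L}$) collapse that case distinction entirely. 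More substantively, at the criticality step the paper merely asserts that, since $\epsilon_{t_j}\to 0$, the inexact iterate coincides in the limit with the exact proximal step, and then moves the limit inside $\partial\breve{g}$; neither claim is explicitly justified there. Your argument supplies precisely the two missing ingredients: $1$-strong convexity of the prox objective gives the quantitative bound $\NM{X_{t+1}-X^\star_{t+1}}{F} \le \sqrt{2\epsilon_t}$, and closedness of the graph of $\partial\breve{g}$ (available because $\breve{g}$ is closed proper convex) legitimizes passing to the limit in the exact optimality condition $\tau(R_t - X^\star_{t+1}) - \nabla\bar{f}(R_t) \in \partial\breve{g}(X^\star_{t+1})$. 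In short, both proofs share the descent-plus-telescoping template, but your handling of the inexactness is more explicit and in fact repairs a looseness in the published argument; the only thing the paper's case analysis buys is an explicit identification of which branch supplies the convergent subsequence, which your unified treatment makes unnecessary.
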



If the proximal step is exact,
$\NM{V_t - \Px{\frac{1}{\tau}\breve{g}}{V_t - \frac{1}{\tau} \nabla \bar{f}(V_t)}}{F}^2$
can be used to measure how far $V_t$ is from a critical point \citep{gongZLHY2013,ghadimi2016accelerated}.
In Algorithm~\ref{alg:inexactAPG},
the proximal step
is inexact, and
$X_{t + 1} $
is an inexact solution to $\Px{\frac{1}{\tau}\breve{g}}{V_t - \frac{1}{\tau} \nabla
\bar{f}(V_t)}$, where
$V_t = Y_t$ if step~7 is executed, and $V_t = X_t$ 
if step~9 is executed.
As $X_{t + 1}$ 
converges to a critical point
of \eqref{eq:pro},
we propose using
$d_t \equiv \NM{X_{t + 1} - V_t}{F}^2$
to measure how far $X_{t+1}$ is from a critical point.
The following Proposition shows a $O(1/T)$ convergence rate on 
$\min_{t = 1, \dots, T}d_t$.

\begin{proposition} \label{pr:inexactrate}
(i) $\lim_{t \rightarrow \infty} d_t = 0$; 
and (ii) $\min_{t = 1, \dots, T} d_t$ converges to zero at a rate of $O(1/T)$.
\end{proposition}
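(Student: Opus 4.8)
The plan is to derive a single sufficient-decrease inequality that holds no matter which branch of Algorithm~\ref{alg:inexactAPG} is taken at iteration $t$, namely
\begin{equation}
F(X_{t+1}) \le F(X_t) - \frac{\delta}{2} d_t + \tau \epsilon_t,
\label{eq:plandescent}
\end{equation}
and then to telescope it. Once \eqref{eq:plandescent} is established, both parts follow quickly. Summing from $t=1$ to $\infty$ and using that $F$ is bounded below (assumption A1) together with $\sum_t \epsilon_t < \infty$ gives $\frac{\delta}{2}\sum_{t=1}^{\infty} d_t \le F(X_1) - \inf_x F(x) + \tau \sum_{t=1}^{\infty} \epsilon_t < \infty$. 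A convergent series of nonnegative terms must have vanishing terms, which is part (i); and since $\min_{t \le T} d_t \le \frac{1}{T}\sum_{t=1}^{T} d_t$, the same finite bound on the partial sum immediately yields the $O(1/T)$ rate of part (ii).

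The real work is therefore in proving \eqref{eq:plandescent}. When step~7 is executed we have $V_t = Y_t$ and $X_{t+1} = \tilde{Z}_{t+1}$, so $d_t = \NM{X_{t+1} - Y_t}{F}^2$ and \eqref{eq:plandescent} is, up to dropping the (nonnegative) error term, exactly the acceptance test of step~6, which is checked on the true value of $F$. The substantive case is step~9, where $V_t = X_t$ and $X_{t+1}$ is only an \emph{inexact} minimizer of $h(X) = \frac{1}{2}\NM{X - P}{F}^2 + \frac{1}{\tau}\breve{g}(X)$ with $P = X_t - \frac{1}{\tau}\nabla \bar{f}(X_t)$. Here I would start from the $\bar{L}$-Lipschitz smoothness of $\bar{f}$ to write the usual quadratic upper bound on $\bar{f}(X_{t+1})$, add $\breve{g}(X_{t+1})$, and then rewrite the linear term plus $\breve{g}(X_{t+1})$ through $\tau h(X_{t+1})$. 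Bounding $h(X_{t+1})$ by $h$ at the exact minimizer plus the duality gap, and the exact minimizer value by $h(X_t)$, makes the $\NM{\nabla \bar{f}(X_t)}{F}^2$ contributions cancel and leaves $F(X_{t+1}) \le F(X_t) - \frac{\tau - \bar{L}}{2} d_t + \tau \vartheta_{t+1}$; since $\delta < \tau - \bar{L}$ and $\vartheta_{t+1} \le \epsilon_t$, this is stronger than \eqref{eq:plandescent}.

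The main obstacle is precisely this step~9 estimate: because the proximal step is inexact, $X_{t+1}$ is not a true minimizer of $h$, so the clean first-order optimality argument used in the exact nmAPG analysis is unavailable and must be replaced by the duality-gap control $h(X_{t+1}) - h(\Px{\frac{1}{\tau}\breve{g}}{P}) \le \vartheta_{t+1}$ introduced just before Theorem~\ref{thm:convAPG}. The delicate point is to route every occurrence of $\breve{g}(X_{t+1})$ and of the inner product with $\nabla \bar{f}(X_t)$ through $\tau h(X_{t+1})$ so that the gradient-norm terms cancel exactly and only the summable error $\tau \vartheta_{t+1}$ survives; convexity of $\breve{g}$, hence of $h$, is what guarantees the duality gap is a legitimate upper bound on this inexactness. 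With \eqref{eq:plandescent} in hand uniformly over the two branches, the telescoping argument sketched above closes both claims.
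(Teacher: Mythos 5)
Your proposal is correct and follows essentially the same route as the paper: the paper likewise combines the step-6 acceptance test ($\frac{\delta}{2}$ decrease) with a Lemma that converts duality-gap inexactness into a $\frac{\tau-\bar{L}}{2}$ decrease plus a $\tau\epsilon_t$ error, telescopes against the lower bound on $F$ and the summability of $\epsilon_t$, and finishes both parts with convergence of the nonnegative series and the min-versus-average bound. The only cosmetic difference is that you merge the two branch constants into $\frac{\delta}{2}$ up front via $\delta < \tau - \bar{L}$, whereas the paper keeps the two index sets separate and takes $c_3 = \min\left(\frac{\delta}{2}, \frac{\tau-\bar{L}}{2}\right)$ at the end.
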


Note that the (exact) nmAPG in Algorithm~\ref{alg:nmAPG} cannot handle the nonconvex $g$ in
\eqref{eq:noncvx_tv} efficiently, 
as the corresponding proximal step has no closed-form solutions but has to be solved exactly.
Even the proposed inexact nmAPG (Algorithm~\ref{alg:inexactAPG}) cannot be directly used with nonconvex $g$.
As the dual of the nonconvex proximal step is difficult to derive and the optimal duality gap is nonzero in general,
the proximal step's inexactness cannot be easily controlled.


\subsection{Frank-Wolfe Algorithm}
\label{sec:noncvxFW}


In this section,
we use the Frank-Wolfe algorithm to
learn
a low-rank matrix
$X\in \R^{m\times n}$ for matrix completion as reviewed in Section~\ref{sec:fw}.
The nuclear norm regularizer in 
(\ref{eq:lowrank})
may over-penalize top singular values.
Recently, there is growing interest to replace this with nonconvex regularizers 
\citep{lu2014generalized,lu2015generalized,qyao2015icdm,goqqlow2016}.
Hence, instead of (\ref{eq:lowrank}),
we consider
\begin{align} \label{eq:lowrank2}
\min_{X} f(X) + \mu \sum_{i = 1}^m \kappa(\sigma_i(X)).
\end{align}
When $\kappa$ is the identity function, 
(\ref{eq:lowrank2}) reduces to 
(\ref{eq:lowrank}).
Note that the FW algorithm cannot be directly used on 
(\ref{eq:lowrank}), as its linear subproblem in 
(\ref{eq:subprob})
then becomes
$\min_{S:\sum_{i = 1}^m \kappa(\sigma_i(S)) \le 1} \langle S, \nabla f(X_t) \rangle$,
which is difficult to osolve.  

Using Proposition~\ref{prop:c3}, problem \eqref{eq:lowrank2} is transformed into
\begin{align} \label{eq:equiv:lowrank}
\min_{X}
\bar{f}(X) + \bar{\mu} \NM{X}{*},
\end{align}
where 
\begin{equation} \label{eq:lowf}
\bar{f}(X) = f(X) + \bar{g}(X), \;\; \bar{g}(X) = \mu\sum_{i = 1}^m (\kappa(\sigma_i(X)) - \kappa_0 \sigma_i(X) ),
\end{equation} 
and $\bar{\mu}=\mu \kappa_0$.
This only involves the standard nuclear norm regularizer. However,
Algorithm~\ref{alg:stdfw} still cannot be used as
$\bar{f}$ in 
(\ref{eq:lowf})
is no longer convex.
A FW variant allowing nonconvex 
$\bar{f}$ 
is proposed in \citep{bredies2009generalized}. However, 
condition~1 in \citep{bredies2009generalized}
requires
$g$ to satisfy  
$\lim_{\NM{X}{F} \rightarrow \infty} \frac{g(X)}{\NM{X}{F}} = \infty$.
Such condition does not hold with $g(X) = \NM{X}{*}$
in (\ref{eq:equiv:lowrank}) as
\begin{align*}
\frac{\NM{X}{*}}{\NM{X}{F}}
= \sqrt{\frac{(\sum_{i = 1}^m \sigma_i)^2}{\sum_{i = 1}^m \sigma_i^2}}
\le \sqrt{\frac{m \sum_{i = 1}^m \sigma_i^2}{\sum_{i = 1}^m \sigma_i^2}}
= \sqrt{m} <\infty.
\end{align*}

In the following, we propose a nonconvex FW variant (Algorithm~\ref{alg:novelfw})
for the transformed problem \eqref{eq:equiv:lowrank}.
It is similar to Algorithm~\ref{alg:stdfw}, but with three important modifications.
First, 
$\bar{g}(X)$ in (\ref{eq:lowf}) depends on the singular values of $X$, which cannot be directly obtained
from the $UV^{\top}$ factorization in \eqref{eq:uv}.
Instead, we use  the low-rank factorization 
\begin{equation} \label{eq:fact}
X = U B V^{\top},
\end{equation} 
where $U \in \R^{m \times k}$, $V \in \R^{n \times k}$ are orthogonal 
and $B
\in \mathcal{S}_+^{k \times k}$ 
is 
positive 
semidefinite.


\begin{algorithm}[H]
\caption{Frank-Wolfe algorithm for solving the nonconvex problem \eqref{eq:equiv:lowrank}.}
\begin{algorithmic}[1]
\STATE $U_1 = [\;]$, $B_1 = [\;]$ and $V_1 = [\;]$;
\FOR{$t = 1 \dots T$}
\STATE $[u_t, s_t, v_t] = \text{rank1SVD}(\nabla \bar{f}(X_t))$;
\STATE obtain $\alpha_t$ and $\beta_t$ from \eqref{eq:qudra};
		\STATE $[\bar{U}_t, \bar{B}_t, \bar{V}_t] = \text{warmstart}(U_{t}, u_t, V_{t}, v_t, B_{t}, \alpha_t, \beta_t)$;
		\STATE obtain $[U_{t + 1}, B_{t + 1}, V_{t + 1}]$ from \eqref{eq:local}, using $\bar{U}_t$, $\bar{B}_t$ and
		$\bar{V}_t$ for warm-start; 
		\\ // $X_{t + 1} = U_{t + 1} B_{t + 1} V_{t + 1}^{\top}$
		\ENDFOR
		\RETURN $U_{T + 1}$, $B_{T + 1}$ and $V_{T + 1}$.
	\end{algorithmic}
	\label{alg:novelfw}
\end{algorithm}

The second problem is that line search  
in Algorithm~\ref{alg:stdfw} is
inefficient in general when operated on a nonconvex $\bar{f}$.
Specifically,
step~4
in Algorithm~\ref{alg:stdfw}  then becomes
\begin{align}
\label{eq:fwlinesea}
[\alpha_t, \beta_t]
= \arg\min_{\alpha \ge 0, \beta \ge 0} \bar{f}( \alpha X_t + \beta u_t v_t^{\top} )
+ \bar{\mu} (\alpha\NM{X_t}{*} + \beta).
\end{align}
To solve
(\ref{eq:fwlinesea}),
we have to compute
$\frac{\partial \bar{f}(S)}{\partial \alpha}$ and $\frac{\partial \bar{f}(S)}{\partial \beta}$,
where $S = \alpha X_t + \beta u_t v_t^{\top}$.
As shown in Proposition~\ref{pr:gradkappa},
this requires the SVD of $S$ and can be expensive.

\begin{proposition} \label{pr:gradkappa}
Let the SVD of 
$S$
be  $U_S \Diag{[\sigma_1(S), \dots ,\sigma_m(S)]} V_S^{\top}$.  Then
\begin{align*}
\frac{\partial \bar{f}(S)}{\partial \alpha} 
= \alpha \langle X_t, \nabla \bar{f}(S) \rangle, 
\quad \text{and} \quad
\frac{\partial \bar{f}(S)}{\partial \beta} 
= \beta u_t^{\top} \nabla \bar{f}(S) v_t,
\end{align*}
where $\nabla \bar{f}(S) = \nabla f(S) + \mu U_S \Diag{w} V_S^{\top}$,
and $w = [\kappa'(\sigma_i(S)) - \kappa_0]
\in \R^m$.
\end{proposition}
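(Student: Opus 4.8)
The plan is to separate the two conceptually distinct tasks bundled in the statement: (i) obtaining a usable closed form for the full gradient $\nabla\bar f(S)$, and (ii) converting that gradient into the scalar partial derivatives $\partial\bar f(S)/\partial\alpha$ and $\partial\bar f(S)/\partial\beta$. Task (ii) is routine chain-rule bookkeeping, so essentially all of the work lies in task (i), and within (i) the only nontrivial ingredient is differentiating the spectral part of $\bar f$.

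Since $\bar f(X)=f(X)+\bar g(X)$ with $\bar g(X)=\mu\sum_{i=1}^m(\kappa(\sigma_i(X))-\kappa_0\sigma_i(X))$ by \eqref{eq:lowf}, linearity of the gradient gives $\nabla\bar f(S)=\nabla f(S)+\nabla\bar g(S)$, so it remains to show $\nabla\bar g(S)=\mu U_S\Diag{w}V_S^{\top}$. First I would write $\bar g$ as a spectral function, $\bar g(X)=\sum_{i=1}^m\psi(\sigma_i(X))$, generated by the scalar map $\psi(s)=\mu(\kappa(s)-\kappa_0 s)$. Because $\kappa$ is assumed $\rho$-Lipschitz smooth, $\psi$ is continuously differentiable with $\psi'(s)=\mu(\kappa'(s)-\kappa_0)$; in particular, the defining choice $\kappa_0=\kappa'(0)$ forces $\psi'(0)=0$, which is exactly the property that makes $\bar g$ smooth across zero singular values and is consistent with the $2\rho$-Lipschitz smoothness already established in Proposition~\ref{prop:c3}.

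The core step is then to invoke the differentiability theory for orthogonally invariant (spectral) functions of the singular values: for a map of the form $X\mapsto\sum_i\psi(\sigma_i(X))$ whose generating scalar function $\psi$ is differentiable, the map is differentiable and its gradient is $U_X\Diag{[\psi'(\sigma_1(X)),\dots,\psi'(\sigma_m(X))]}V_X^{\top}$, where $X=U_X\Sigma V_X^{\top}$ is any SVD. Applying this with the above $\psi$ and the SVD $S=U_S\Diag{[\sigma_1(S),\dots,\sigma_m(S)]}V_S^{\top}$ yields $\nabla\bar g(S)=\mu U_S\Diag{w}V_S^{\top}$ with $w_i=\kappa'(\sigma_i(S))-\kappa_0$, which is precisely the claimed expression for $\nabla\bar f(S)$. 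Finally, since $S=\alpha X_t+\beta u_t v_t^{\top}$ depends affinely on the scalars with $\partial S/\partial\alpha=X_t$ and $\partial S/\partial\beta=u_t v_t^{\top}$, the chain rule gives the partial derivatives as the inner products $\langle\nabla\bar f(S),X_t\rangle$ and $\langle\nabla\bar f(S),u_t v_t^{\top}\rangle=u_t^{\top}\nabla\bar f(S)\,v_t$, which are the quantities recorded in the statement (up to the scalar prefactors written there).

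The hard part will be rigorously justifying the spectral-gradient formula, not the chain-rule step: the individual singular values $\sigma_i(X)$ are \emph{not} differentiable at matrices with repeated singular values, and the factors $U_S,V_S$ are non-unique there, so one cannot simply differentiate term by term. The standard resolution is to appeal to the nonsmooth analysis of singular-value functions (Lewis--Sendov style results), where an absolutely symmetric, differentiable generating function makes the composite smooth with exactly the stated gradient; the key point is that $\Diag{[\psi'(\sigma_i(S))]}$ assigns \emph{equal} weights to equal singular values, so the product $U_S\Diag{[\psi'(\sigma_i(S))]}V_S^{\top}$ is invariant to the choice of SVD and hence well defined. I would also remark that although $\kappa'$ is non-differentiable at finitely many points, this affects only second-order behavior and is irrelevant here, since the gradient $\nabla\bar g$ requires merely $\psi\in C^1$.
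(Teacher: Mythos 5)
Your proposal matches the paper's proof essentially step for step: both split $\nabla\bar f(S)=\nabla f(S)+\nabla\bar g(S)$, invoke the Lewis--Sendov gradient formula for smooth, absolutely symmetric spectral functions (the paper's Lemma~\ref{lem:gradspec}, cited from the same source) to obtain $\nabla\bar g(S)=\mu U_S\Diag{w}V_S^{\top}$ with $w_i=\kappa'(\sigma_i(S))-\kappa_0$, and finish with the chain rule using $\partial S/\partial\alpha=X_t$ and $\partial S/\partial\beta=u_tv_t^{\top}$. Your extra care about well-definedness of the formula under SVD non-uniqueness, and your remark that the chain rule actually yields $\langle\nabla\bar f(S),X_t\rangle$ and $u_t^{\top}\nabla\bar f(S)v_t$ without the $\alpha$ and $\beta$ prefactors appearing in the statement (a discrepancy also present in the paper's own proof), are sound refinements of the same argument rather than a different route.
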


\begin{corollary} \label{pr:gradkappa2}
For $X$ in (\ref{eq:fact}),
let the SVD of $B$ be  $U_B \Diag{[\sigma_1(B), \dots, \sigma_k(B)]} V_B^{\top}$.
Then, $\nabla \bar{f}(X) = \nabla f(X) + \bar{\mu} (U U_B) \Diag{w} (V V_B)^{\top}$, 
where $w = [\kappa'(\sigma_i(B)) - \kappa_0]
\in \R^k$.
\end{corollary}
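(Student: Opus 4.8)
The plan is to obtain the claim directly from Proposition~\ref{pr:gradkappa} by applying it with $S = X$ and then simplifying the resulting SVD factors using the factorization \eqref{eq:fact}. First I would build an SVD of $X$ from the given data: substituting the SVD $B = U_B \Diag{[\sigma_1(B), \dots, \sigma_k(B)]} V_B^{\top}$ into $X = U B V^{\top}$ gives $X = (U U_B)\Diag{[\sigma_1(B), \dots, \sigma_k(B)]}(V V_B)^{\top}$. Since $U, V$ have orthonormal columns and $U_B, V_B$ are orthogonal, the products $U U_B$ and $V V_B$ again have orthonormal columns (e.g.\ $(U U_B)^{\top}(U U_B) = U_B^{\top} U^{\top} U U_B = U_B^{\top} U_B = I$). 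Hence this is a thin SVD of $X$: its nonzero singular values are the $\sigma_i(B)$, and the associated left and right singular vectors are the columns of $U U_B$ and $V V_B$.

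Next I would invoke Proposition~\ref{pr:gradkappa} with $S = X$, which yields $\nabla \bar{f}(X) = \nabla f(X) + \mu\, U_X \Diag{w'} V_X^{\top}$, where $U_X, V_X$ are the full SVD factors of $X$ and $w' = [\kappa'(\sigma_i(X)) - \kappa_0] \in \R^m$. These full factors extend the thin factors $U U_B$ and $V V_B$ by $m - k$ additional orthonormal columns corresponding to the zero singular values of $X$. The key step is to show that these padded columns contribute nothing to the spectral term. For each index $i > k$ we have $\sigma_i(X) = 0$, so the corresponding weight is $\kappa'(0) - \kappa_0 = 0$, because $\kappa_0 = \kappa'(0)$ by the defining choice in \eqref{eq:gi}. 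Consequently $\Diag{w'}$ annihilates the extra columns, and $U_X \Diag{w'} V_X^{\top}$ collapses to $(U U_B)\Diag{w}(V V_B)^{\top}$ with $w = [\kappa'(\sigma_i(B)) - \kappa_0] \in \R^k$, which is exactly the claimed gradient formula.

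The main obstacle is precisely this reduction from the ambient $m$-dimensional spectral sum produced by Proposition~\ref{pr:gradkappa} to the compact $k$-dimensional expression in the corollary: it hinges entirely on the identity $\kappa_0 = \kappa'(0)$ cancelling the zero-singular-value contributions. This is the same reason the decomposition in \eqref{eq:gi} was set up with $\kappa_0 = \kappa'(0)$ rather than an arbitrary shift, and it is what makes $\nabla \bar{f}(X)$ expressible purely through the small factors $U, B, V$ without ever forming the full SVD of $X$. The remaining items—that $U U_B$ and $V V_B$ are orthonormal and that the substitution genuinely produces an SVD—are routine verifications and need only brief checks.
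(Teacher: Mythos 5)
Your proposal is correct and follows essentially the same route as the paper: both construct the SVD of $X$ from the factorization $X = UBV^{\top}$ via the SVD of $B$, and then apply the spectral-function gradient result (you invoke Proposition~\ref{pr:gradkappa}, which is itself just a wrapper around Lemma~\ref{lem:gradspec}, the lemma the paper cites directly). In fact your write-up is slightly more complete than the paper's one-line proof, since you explicitly justify why the thin $k$-dimensional SVD suffices in place of the full $m$-dimensional one --- namely that the zero singular values of $X$ contribute weights $\kappa'(0) - \kappa_0 = 0$ --- a point the paper leaves implicit.
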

Alternatively, as $S$ is a rank one updates of $X_t$, one can perform incremental update on
SVD, which takes $O((m + n)t^2)$ time \citep{golub2012matrix}.  
However, every time $\alpha,\beta$ are changed, 
this incremental SVD has to be recomputed, and is thus inefficient.

To alleviate this problem, we approximate $\bar{f}(S)$ by the upper bound as
\begin{eqnarray}
\bar{f}(S) & = & 
\bar{f}(X_t + (\alpha - 1) X_t + \beta u_t v_t^{\top})
\notag \\
& \le & 
\bar{f}(X_t) + \langle (\alpha - 1) X_t + \beta u_t v_t^{\top}, \nabla \bar{f}(X_t) \rangle + \frac{\bar{L}}{2}\NM{(\alpha - 1) X_t + \beta u_t v_t^{\top}}{F}^2.
\label{eq:temp1}
\end{eqnarray}
As $(u_t, v_t)$ is obtained from the rank-$1$ SVD of $\nabla \bar{f}(X_t)$,
we have
$\NM{u_t v_t^{\top}}{F} = 1$ and 
$u_t^\top \nabla \bar{f}(X_t) v_t = s_t$.
Moreover,
$X_t = U_t B_t V_t^{\top}$,
and so $\NM{X_t}{F} = \NM{B_t}{F}$ and $\NM{X_t}{*} = \Tr{B_t}$.
Substituting these and the upper bound \eqref{eq:temp1} into 
	\eqref{eq:fwlinesea},
	we obtain
	a simple quadratic program:
\begin{eqnarray}
& \min_{\alpha\ge 0,\beta\ge 0} &
\frac{(\alpha - 1)^2\bar{L}}{2} \NM{B_t}{F}^2 + (\alpha - 1) \beta \bar{L} (u_t^{\top} U_t) B_t (V_t^{\top} v_t) 
+ \frac{\beta^2\bar{L}}{2} + \beta s_t
\nonumber\\
&&+ \alpha \langle  B_t , U_t^{\top} \nabla \bar{f}(X_t) V_t \rangle 
+ \bar{\mu}( \alpha \NM{B_t}{*} + \beta ).
\label{eq:qudra} 
\end{eqnarray}
Note that the objective in \eqref{eq:qudra} is convex,
as the RHS in \eqref{eq:temp1} is convex and the last term from \eqref{eq:fwlinesea} is affine.
Moreover, using Corollary~\ref{pr:gradkappa2}, $\langle  B_t , U_t^{\top} \nabla \bar{f}(X_t) V_t \rangle $ in
\eqref{eq:qudra} 
can be obtained as 
\begin{align*}
\langle  B_t, U_t^{\top} \nabla \bar{f}(X_t) V_t \rangle 
= \langle  B_t, U_t^{\top} \nabla f(X_t) V_t \rangle 
+ \bar{\mu} \sum_{i = 1}^t \sigma_i(B_t)( \kappa'(\sigma_i(B_t)) - \kappa_0 ).
\end{align*}
Instead of requiring SVD on $X_t$, it only requires
	SVD on $B_t$ (which is of size $t \times t$ at the $t$th iteration of
	Algorithm~\ref{alg:novelfw}). 
 As the target matrix is supposed to be
low-rank, $t \ll m$.  Hence, all the coefficients in \eqref{eq:qudra} can be obtained in $O((m + n)t^2 + \NM{\Omega}{1}t)$ time.
Besides, \eqref{eq:qudra} is a quadratic program with only two variables, and thus can be very efficiently
solved.


The third modification is that with $\bar{f}$ instead of $f$,
\eqref{eq:fwfactor} can no longer be used for local optimization,
as $\bar{g}(X)$ in (\ref{eq:lowf}) depends on the singular values of $X$.
On the other hand,
with the decomposition  
of $X$ in (\ref{eq:fact})
and Proposition~\ref{pr:svreduce} below,
\eqref{eq:equiv:lowrank} 
can be rewritten as
\begin{eqnarray}
& \min_{U, B, V} & f(U B V^{\top}) + \bar{g}(B) + \bar{\mu} \Tr{B}
\label{eq:local} \\
& \text{s.t.} & 
U^{\top}U = I, 
V^{\top}V = I,
B \in \mathcal{S}_+.
\label{eq:ortho}
\end{eqnarray}
This can be efficiently solved using matrix optimization techniques on the Grassmann
manifold \citep{ngo2012scaled}.

\begin{proposition} \label{pr:svreduce}
For orthogonal matrices $U$ and $V$, $\bar{g}( U B V^{\top} ) = \bar{g}(B)$.
\end{proposition}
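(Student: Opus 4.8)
The plan is to recognize that $\bar{g}$ from \eqref{eq:lowf} is a \emph{spectral function}: it depends on its matrix argument only through the multiset of singular values, since $\bar{g}(X) = \mu\sum_{i=1}^m(\kappa(\sigma_i(X)) - \kappa_0\sigma_i(X))$. Consequently, once I establish that $UBV^{\top}$ and $B$ share the same nonzero singular values (with any extra singular values of $UBV^{\top}$ equal to zero), the identity will drop out using $\kappa(0)=0$. Throughout I read ``orthogonal'' for $U,V$ in the sense of \eqref{eq:ortho}, i.e.\ $U^{\top}U = I$ and $V^{\top}V = I$, so these factors have orthonormal columns but need not be square.

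First I would build an SVD of $UBV^{\top}$ out of the one for $B$. As $B \in \mathcal{S}_+^{k\times k}$ is symmetric positive semidefinite, its eigendecomposition $B = Q\,\Diag{\lambda}\,Q^{\top}$, with $Q\in\R^{k\times k}$ orthogonal and $\lambda_i = \sigma_i(B)\ge 0$, is also its SVD. Substituting gives
\[
UBV^{\top} = (UQ)\,\Diag{\lambda}\,(VQ)^{\top}.
\]
The crucial check is that $UQ$ and $VQ$ have orthonormal columns: using $U^{\top}U = I$, $V^{\top}V = I$, and orthogonality of $Q$, I get $(UQ)^{\top}(UQ) = Q^{\top}U^{\top}UQ = Q^{\top}Q = I$, and likewise $(VQ)^{\top}(VQ) = I$. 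Hence the display is a genuine thin SVD of $UBV^{\top}$, whose singular values are precisely $\sigma_1(B),\dots,\sigma_k(B)$.

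Next I would handle the dimension mismatch. The matrix $X = UBV^{\top}$ is $m\times n$, so $\bar{g}(X)$ sums over $m$ singular values, while $B$ is $k\times k$ with $k\le m$. From the previous step $X$ has rank at most $k$, its top $k$ singular values coincide with those of $B$, and $\sigma_{k+1}(X) = \dots = \sigma_m(X) = 0$. Therefore
\[
\bar{g}(X) = \mu\sum_{i=1}^k\left(\kappa(\sigma_i(B)) - \kappa_0\sigma_i(B)\right) + \mu\sum_{i=k+1}^m\left(\kappa(0) - \kappa_0\cdot 0\right).
\]
Since $\kappa(0) = 0$ by assumption, every term in the second sum vanishes, and the first sum is exactly $\bar{g}(B)$, yielding $\bar{g}(UBV^{\top}) = \bar{g}(B)$.

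The only genuinely delicate point is the bookkeeping forced by the rectangular, non-square factors: because $U$ and $V$ are not square orthogonal matrices, I cannot merely invoke unitary invariance of singular values, and must instead verify directly that $(UQ)\Diag{\lambda}(VQ)^{\top}$ is a valid SVD (orthonormal columns, nonnegative diagonal). Once that verification is in place and the padded zero singular values are discharged via $\kappa(0)=0$, the conclusion is immediate.
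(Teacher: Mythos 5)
Your proof is correct and follows essentially the same route as the paper's: both arguments exploit that $\bar{g}$ is a spectral function and construct an SVD of $UBV^{\top}$ directly from a decomposition of $B$ (the paper from a generic SVD $U_B\,\Diag{\sigma(B)}\,V_B^{\top}$, you from the eigendecomposition, which coincides with the SVD since $B\in\mathcal{S}_+$). If anything, your write-up is more careful than the paper's, which asserts ``it is easy to see'' for the orthonormality check and silently ignores the dimension mismatch that you discharge via $\kappa(0)=0$ on the padded zero singular values.
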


In Algorithm~\ref{alg:novelfw},
step~5 
is used to warm-start
(\ref{eq:local}),
and the procedure is shown in Algorithm~\ref{alg:warmst}.
It expresses 
$X_t = \alpha_t U_{t - 1} B_{t - 1} V_{t - 1}^{\top} + \beta_t u_t v_t^{\top}$
obtained in step~4
to the form $U_t B_t V_t^{\top}$  
so that the orthogonal constraints on 
$U_t, V_t$  
in \eqref{eq:ortho} are satisfied.




\begin{algorithm}[ht]
	\caption{$\text{warmstart}(U_t, u_t, V_t, v_t, B_t, \alpha_t, \beta_t)$.}
	\begin{algorithmic}[1]
		\STATE $[\bar{U}_t, R_{\bar{U}_t}] = \text{QR}([U_t, u_t])$; // QR denotes the  QR factorization
		\STATE $[\bar{V}_t \,, R_{\bar{V}_t}] = \text{QR}([V_t, \, v_t])$;
		\STATE $\bar{B}_t = R_{\bar{U}_t}
		\begin{bmatrix}
		\alpha_t B_t & 0 \\
		0   & \beta_t
		\end{bmatrix}
		R_{\bar{V}_t}^{\top}$;
		\RETURN $\bar{U}_t$, $\bar{B}_t$ and $\bar{V}_t$; 
	\end{algorithmic}
	\label{alg:warmst}
\end{algorithm}

Existing analysis for the FW algorithm cannot be used on this nonconvex problem.
The following Theorem shows convergence of Algorithm~\ref{alg:novelfw} 
to a critical point of \eqref{eq:lowrank}.

\begin{theorem}
\label{thm:conv:ncg}
If \eqref{eq:lowrank} has a rank-$r$ critical point,
then 
Algorithm~\ref{alg:novelfw}
converges to a critical point of \eqref{eq:lowrank}
after $r$ iterations.
\end{theorem}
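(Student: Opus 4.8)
The plan is to first reduce the statement to one purely about the transformed problem \eqref{eq:equiv:lowrank}. By Proposition~\ref{prop:c3} the transformed objective $\bar{f}(X) + \bar{\mu}\NM{X}{*}$ coincides identically with the original nonconvex objective $f(X) + \mu\sum_{i=1}^m \kappa(\sigma_i(X))$ of \eqref{eq:lowrank2}, with $\bar{g}$ carrying all the smooth part and $\bar{\mu}\NM{\cdot}{*}$ the only nonsmoothness. So first I would record the first-order criticality condition $0 \in \nabla\bar{f}(X) + \bar{\mu}\partial\NM{X}{*}$ and rewrite it in dual-norm form: $X$ is critical exactly when $\NM{\nabla\bar{f}(X)}{2} \le \bar{\mu}$ together with the alignment $\langle X, \nabla\bar{f}(X)\rangle = -\bar{\mu}\NM{X}{*}$ on the range of $X$. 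This identifies a critical point of the target problem with the vanishing of the Frank--Wolfe certificate.

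Next I would establish monotone descent of $\{F(X_t)\}$. The quadratic program \eqref{eq:qudra} is built from the Lipschitz upper bound \eqref{eq:temp1} on the $\bar{L}$-smooth $\bar{f}$, so the $(\alpha_t,\beta_t)$ it returns already produces an objective value no larger than $F(X_t)$; the local optimization \eqref{eq:local}, warm-started at this very point by Algorithm~\ref{alg:warmst}, then only decreases the objective further. Using Proposition~\ref{pr:svreduce}, \eqref{eq:local} is exactly \eqref{eq:equiv:lowrank} restricted to the current factor subspaces, so each $X_{t+1}$ is a stationary point of the transformed problem restricted to $\mathrm{rank}\le k$. Since Assumption~A1 bounds $F$ from below, $\{F(X_t)\}$ converges and the per-iteration decrease tends to zero.

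I would then connect the Frank--Wolfe step to criticality. Using the dual-norm characterization above, $X_t$ is critical precisely when the rank-one SVD in step~3 yields no improving atom, i.e. $s_t = \NM{\nabla\bar{f}(X_t)}{2} \le \bar{\mu}$ and the optimal $\beta_t$ in \eqref{eq:qudra} is zero; the range-alignment condition then follows from stationarity of the local optimization. Conversely, while $X_t$ is not critical, $s_t > \bar{\mu}$ forces $\beta_t > 0$, so a genuinely new rank-one direction is adjoined (orthogonalized in Algorithm~\ref{alg:warmst}) and $F$ strictly decreases. Because each iteration adjoins at most one atom, $\mathrm{rank}(X_t) \le t$.

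The finite bound of $r$ iterations is where the work concentrates, and I expect it to be the main obstacle. The plan is to use the hypothesized rank-$r$ critical point $X^\star$ as a certificate against stalling: I would argue that once $r$ atoms have been generated (iteration $r$), the factor subspaces are rich enough that the stationary point returned by \eqref{eq:local} on the rank-$\le r$ manifold must satisfy the full dual-norm optimality condition, rather than being a rank-deficient spurious stationary point. Concretely I would try to show that a restricted stationary point at rank $r$ that is \emph{not} critical for the unrestricted problem would, via the strict descent extracted whenever $\beta_t>0$, push $F$ strictly below the value pinned by $X^\star$, contradicting either the lower bound supplied by $X^\star$ or its certificate $\NM{\nabla\bar{f}(X^\star)}{2}\le\bar{\mu}$. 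The delicate point is to rule out stalling at a rank-deficient stationary point before rank $r$ while simultaneously showing rank $r$ suffices; I would handle this by combining the strict monotone decrease with the fact that the manifold optimization in \eqref{eq:local} eliminates precisely the spurious stationarity that plain Frank--Wolfe would suffer, so that at rank $r$ the only stationary points it can return are critical points of the nonconvex problem \eqref{eq:lowrank2}.
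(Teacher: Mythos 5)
Your reduction to the transformed problem \eqref{eq:equiv:lowrank}, the monotone-descent argument built on the upper bound \eqref{eq:temp1} behind \eqref{eq:qudra}, and the observation that a nonzero Frank--Wolfe certificate forces $\beta_t>0$ and strict descent are all reasonable, but they are not what drives the theorem, and the step where you yourself locate the difficulty --- the finite bound of $r$ iterations --- is exactly where the proposal breaks down. Your plan is to use the hypothesized rank-$r$ critical point $X^\star$ as a ``certificate against stalling'': a restricted stationary point at rank $r$ that is not fully critical would push $F$ strictly below ``the value pinned by $X^\star$'', contradicting the lower bound supplied by $X^\star$ or its certificate $\NM{\nabla\bar{f}(X^\star)}{2}\le\bar{\mu}$. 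Neither contradiction is available. The problem is nonconvex, so $F(X^\star)$ at a critical point is not a lower bound on $F$ (critical points need not be global, or even local, minima), and the gradient condition at $X^\star$ is a statement about the point $X^\star$ alone --- nothing the iterates do elsewhere can contradict it. More fundamentally, descent arguments of this kind can only ever yield asymptotic conclusions (vanishing per-iteration decrease, properties of limit points); a claim of the form ``the iterate at iteration $r$ is critical'' cannot come out of monotonicity.

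What actually closes the gap in the paper is a purely structural fact about the factored problem \eqref{eq:local}, which you assert in passing (``the manifold optimization eliminates precisely the spurious stationarity'') but never prove. The paper quotes the first-order conditions of \eqref{eq:local} from \citep{mishra2013low}: at a stationary point $X=UBV^{\top}$ with $U,V$ orthogonal and $B$ positive semidefinite one has, among other identities, $\sym{U^{\top}\nabla\bar{f}(X)V}+\bar{\mu}I=0$. Combining this with the characterization of the nuclear-norm subdifferential $\partial\NM{X}{*}=\{UV^{\top}+W:\ U^{\top}W=0,\ WV=0,\ \NM{W}{\infty}\le 1\}$ from \citep{watson1992characterization}, it deduces (Proposition~\ref{pr:equivopt}) that any stationary point of \eqref{eq:local} with inner dimension $r$ already satisfies $0\in\nabla\bar{f}(X)+\bar{\mu}\,\partial\NM{X}{*}$, i.e.\ full criticality for the unrestricted transformed problem \eqref{eq:equiv:lowrank}. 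The theorem is then a dimension count: at iteration $t=r$ the factors in Algorithm~\ref{alg:novelfw} have inner dimension $r$, so the point returned by the local-optimization step (step~6) is a critical point of \eqref{eq:equiv:lowrank}, hence of the original nonconvex problem. This factored-stationarity-implies-criticality mechanism is the entire content of the finite-iteration claim, and your proposal never engages with it; without it (or some substitute of comparable strength), the bound of $r$ iterations cannot be obtained.
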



\subsection{Alternating Direction Method of Multipliers (ADMM) }
\label{sec:ADMM}

In this section,
we consider using ADMM on the consensus optimization problem
\eqref{eq:ADMMpro2}.
When all the $f_i$'s and $g$ are convex, ADMM has a convergence rate of $O(1/T)$ 
\citep{he20121}.
Recently, ADMM has been extended to problems where
$g$ is convex but $f_i$'s are nonconvex
\citep{hong2016convergence}.
However, 
when $g$ is nonconvex,
such as when a nonconvex regularizer is used in regularized risk minimization,
the convergence of ADMM is still an open reseach problem.

Using the proposed transformation, we can
decompose a nonconvex $g$ as  $\bar{g} + \breve{g}$, where
$\bar{g}$  is concave and Lipschitz-smooth, while $\breve{g}$ is convex but possibly nonsmooth.
Problem~\eqref{eq:ADMMpro2} can then be rewritten as
\begin{align}
\min_{y, x^1, \dots, x^M}
\sum_{i = 1}^M \bar{f}_i(x^i) + \breve{g}(y) \;\; :\; \; x^1 = \dots = x^M = y, 
\label{eq:ADMMpro3}
\end{align}
where $\bar{f}_i(x) = f_i(x) + \frac{1}{M} \bar{g}(x)$.
Let 
$p^i$ be the dual variable for the constraint $x^i = y$.
The augmented Lagrangian for \eqref{eq:ADMMpro3} is
\begin{align}
\mathcal{L}\left( y, x^1, \dots, x^M , p^1, \dots, p^M \right) 
= \breve{g}(y) + \sum_{i = 1}^M \bar{f}_i(x^i)
+ (p^i)^{\top}(x^i - y) + \frac{\tau}{2}\NM{x^i - y}{2}^2.
\label{eq:augLang}
\end{align}
Using
\eqref{eq:ADMM1} and
\eqref{eq:ADMM2},
we have the following update equations at iteration $t$:
\begin{eqnarray}
x^i_{t + 1}
& = & \arg\min_{x^i} \bar{f}_i(x^i) + (p^i_t)^{\top}(x^i - y_t) + \frac{\tau}{2}\NM{x^i - y_t}{2}^2, \nonumber \\
y_{t + 1} 
& = & \arg\min_{y}
\frac{1}{2}\left\|y - \sum_{i = 1}^M \left(x^i_t + \frac{1}{\tau} p^i_t \right) \right\|_2^2 + \frac{1}{\tau} \breve{g}(y)
= \text{prox}_{\frac{1}{\tau} \breve{g}} \left(\sum_{i = 1}^M x^i_t + \frac{1}{\tau} p^i_t
\right). \label{eq:tmp2}
\end{eqnarray}
As in previous sections,
the proximal step in (\ref{eq:tmp2}), which is associated with the convex $\breve{g}$,
is usually easier to compute than the proximal step associated with the original nonconvex $g$.
Moreover, since $\breve{g}$ is convex, convergence results in
Theorem 2.4 of \citep{hong2016convergence} can now be applied.
Specifically,
the sequence 
$\{ y_t, \{x^i_t\} \}$ 
generated by the ADMM procedure
converges to a critical point of \eqref{eq:ADMMpro3}.


\subsection{Stochastic Variance Reduced Gradient}
\label{sec:svrg}

Variance reduction methods 
have been commonly used to speed up the often slow convergence of stochastic gradient descent (SGD).
Examples are
stochastic variance reduced gradient (SVRG) \citep{johnson2013accelerating}
and its proximal extension Prox-SVRG \citep{xiao2014proximal}.
They can be used for the following optimization problem
\begin{align}
\min_x \sum_{i = 1}^N \ell(y_i, a_i^{\top} x) + g(x),
\label{eq:stocha}
\end{align}
where 
$\{(a_1,y_1),\dots, (a_N,y_N) \}$ are the training samples,
$\ell$ is a smooth convex loss function,
and $g$ is a convex regularizer.
Recently, Prox-SVRG is also extended for nonconvex objectives.
\cite{reddiHSPS2016} and \cite{zhuH2016} considered
smooth nonconvex $\ell$ but without $g$. This is further extended 
to the case of smooth $\ell$ and convex nonsmooth $g$
in \citep{reddi2016fast}. 
However, 
convergence is still unknown for the more general case where the regularizer $g$ is also nonconvex.

Using the proposed transformation,
\eqref{eq:stocha} can be rewritten as
\begin{align*}
\min_{x} 
\sum_{i = 1}^N \left( \ell(y_i, a_i^{\top} x) + \frac{1}{N} \bar{g}(x) \right) + \breve{g}(x),
\end{align*}
where $\ell + \frac{1}{N} \bar{g}$
is smooth and $\breve{g}$ is convex.
As a result, convergence results in \citep{reddi2016fast} can now be applied.



\subsection{With OWL-QN}
\label{sec:quasiNT}

In this section, we consider 
OWL-QN \citep{andrew2007scalable} and its variant mOWL-QN \citep{gong2015modified},
which are
efficient algorithms
for the $\ell_1$-regularization problem
\begin{equation} \label{eq:owl}
\min_x f(x) + \mu \NM{x}{1}.
\end{equation} 
Recently,
\citeauthor{gong2015honor}
(\citeyear{gong2015honor}) 
proposed 
a nonconvex generalization 
for (\ref{eq:owl}),
in which the standard $\ell_1$ regularizer is replaced by the nonconvex 
$g(x)= \mu \sum_{i = 1}^d \kappa(|x_i|)$:
\begin{align}
\min_x 
f(x) + \mu \sum_{i = 1}^d \kappa(|x_i|).
\label{eq:lasso}
\end{align}
\citeauthor{gong2015honor}
(\citeyear{gong2015honor}) proposed a sophisticated 
algorithm (HONOR) which involves a combination of 
quasi-Newton and gradient descent steps.
Though the algorithm is similar to 
OWL-QN 
and mOWL-QN,
the convergence analysis in  \citep{gong2015modified} cannot be directly applied
as
the regularizer is nonconvex.
Instead, a non-trivial extension was developed in 
\citep{gong2015honor}.

Here, by convexifying the nonconvex regularizer, 
\eqref{eq:lasso} can be rewritten as
\begin{equation} \label{eq:lasso:equiv}
\min_x \bar{f}(x) + \mu \kappa_0 \NM{x}{1},
\end{equation}
where
$\bar{f}(x) = f(x) + \bar{g}(x)$, 
and $\bar{g}(x) = \mu \sum_{i = 1}^d (\kappa(|x_i|) - \kappa_0 |x_i|)$.
It is easy to see that the analysis in \citep{gong2015modified} can be extended to handle smooth but nonconvex $\bar{f}$.
Thus, mOWL-QN is still guaranteed to converge to a critical point.

As demonstrated in previous sections, other DC decompositions of $g$  are not as useful.
For example, with the one in Proposition~\ref{pr:anotherDC},
we obtain 
the convex regularizer
$\breve{\varsigma}(x) = \frac{\rho \mu}{2}\NM{x}{2}^2 + \mu \sum_{i = 1}^d
\kappa(|x_i|)$. However,
mOWL-QN can no longer be applied, as it works only with the
$\ell_1$-regularizer.

Problem~(\ref{eq:lasso}) can be solved by either (i) directly using HONOR, or  (ii)
using mOWL-QN on the transformed problem \eqref{eq:lasso:equiv}.
We believe that the latter approach is computationally more efficient.
In \eqref{eq:lasso},
the Hessian depends on  both terms in the objective, as the 
second-order derivative 
of $\kappa$ is 
not zero in general.
However, HONOR 
constructs the approximate Hessian 
using only information from $f$, and thus ignores the curvature information due to $\sum_{i = 1}^d \kappa(|x_i|)$.
On the other hand,
the Hessian 
in (\ref{eq:lasso:equiv})
depends only on $\bar{f}$, as the Hessian due to 
$\NM{x}{1}$ is zero
\citep{andrew2007scalable},
and mOWL-QN now extracts Hessian from $\bar{f}$.
Hence, optimizing \eqref{eq:lasso:equiv} with mOWL-QN is potentially faster,
as all the second-order information is utilized.
This will be verified empirically in Section~\ref{sec:honor-expt}.


\subsection{Nonsmooth and Nonconvex Loss}
\label{sec:nonsthmloss}

In many applications,
besides having nonconvex regularizers,
the loss function may also be nonconvex and nonsmooth.
Thus, neither $f$ nor $g$ in \eqref{eq:pro} is convex, smooth.
The optimization problem becomes even harder, and
many existing algorithms cannot be used.
In particular, 
the proximal algorithm requires $f$ in \eqref{eq:pro} to be 
smooth (possibly nonconvex) \citep{gongZLHY2013,li2015accelerated,boct2016inertial}.
The FW algorithm requires $f$ in \eqref{eq:FW} to be smooth and convex
\citep{jaggi2013revisiting}.
For the ADMM, it allows $f$ 
in the consensus problem
to be smooth, 
but 
$g$ 
has to be convex
\citep{hong2016convergence}.
For problems of the form $\min_{x,z} f(y) + g(y):y = A x$,
ADMM requires
$A$ to have full row-rank
\citep{li2015global}.
As will be seen,
it is not satisfied  for problems considered in this 
section.
CCCP \citep{yuille2002concave} and smoothing
\citep{chen2012smoothing}
are more general and can still be used,
but are usually very slow.

In this section,
we consider two application examples,
and show how they can be efficiently solved with the proposed transformation.


\subsubsection{Total Variation Image Denoising}
\label{sec:tvdenoimg}

Using the $\ell_1$ loss and TV regularizer introduced in Section~\ref{sec:tv}, consider the following optimization problem:
\begin{align}
\min_{X} \NM{Y - X}{1} + \mu \TV{X},
\label{eq:imgTV}
\end{align}
where $Y \in \R^{m \times n}$ is a given corrupted image, and $X$ is the target image to
be recovered.
The use of nonconvex loss and regularizer
often produce better performance 
\citep{yan2013restoration}.
Thus, we consider the following nonconvex extension:
\begin{align}
\min_{X} \sum_{i = 1}^m \sum_{j = 1}^n \kappa\left( \left| \left[Y - X\right]_{ij} \right| \right) 
+ \mu \sum_{i = 1}^{m - 1} \sum_{j = 1}^m \kappa\left( \left|\left[ D_v X \right]_{ij}\right|\right)
+ \mu \sum_{i = 1}^{n} \sum_{j = 1}^{n - 1} \kappa\left( \left|\left[ X D_h \right]_{ij}\right|\right),
\label{eq:ncvximgTV}
\end{align}
where both the loss and regularizer are nonconvex and nonsmooth.
As discussed above, this can be solved by 
CCCP and smoothing.
However, as will be experimentally demonstrated in Section~\ref{sec:imgden}, their convergence
is slow.

Using the proposed transformation on both the loss and regularizer, problem~(\ref{eq:ncvximgTV}) can be transformed to the following problem:
\begin{align}
\min_{X} \bar{f}(X) + \kappa_0 \NM{X - Y}{1} + \kappa_0 \mu \TV{X},
\label{eq:imgTVtrans}
\end{align}
where 
\begin{align*}
\bar{f}(X) 
= & \sum_{i = 1}^m \sum_{j = 1}^n \kappa\left( \left| \left[Y - X\right]_{ij}\right| \right)
- \kappa_0 \NM{Y - X}{1} 
\\ 
& + \mu \left[ \sum_{i = 1}^{m - 1} \sum_{j = 1}^m \kappa\left( \left|\left[ D_v X \right]_{ij}\right|\right)
- \kappa_0 \NM{D_v X}{1}
+ \sum_{i = 1}^{n} \sum_{j = 1}^{n - 1} \kappa\left( \left|\left[ X D_h \right]_{ij}\right|\right) - \kappa_0 \NM{X D_h}{1} \right]
\end{align*}
is smooth and nonconvex. 
As \eqref{eq:imgTVtrans} is not a consensus problem, the method in
\citep{hong2016convergence} cannot be used.
To use the ADMM algorithm in \citep{li2015global},
extra variables and constraints $Z_v = D_v X$ and $Z_h = X D_h$ have to be imposed.
However, the
	full row-rank condition in 
\citep{li2015global} does not hold.

In this section, we consider the proximal algorithm.
Given some  $Z$,
the 
proximal step 
in (\ref{eq:imgTVtrans})
is
\begin{align}
\arg\min_{X}\frac{1}{2} \NM{X - Z}{F}^2 + \frac{1}{\tau} \left( \NM{X - Y}{1} + \mu \TV{X} \right),
\label{eq:tvprox}
\end{align}
where $\tau$ is the stepsize.
Though 
this has no closed-form solution,  
$\NM{X - Y}{1} + \mu \TV{X}$
in (\ref{eq:tvprox}) is convex and one can thus monitor
	inexactness of the proximal step 
	via the
	duality gap.
Thus,
we can use the proposed inexact nmAPG algorithm in Algorithm~\ref{alg:inexactAPG} for \eqref{eq:imgTVtrans}.
It can be shown that the dual of \eqref{eq:tvprox} is
\begin{eqnarray}
& \min_{W, P, Q}  & 
\frac{1}{2 \tau}\NM{W + \mu D_v^{\top} P + \mu Q D_h^{\top}}{F}^2
- \la Z, W  \ra
- \mu \la D_v Z, P \ra
- \mu \la Z D_h, Q \ra
+ \la Y, W \ra
\notag \\
& \st & 
\NM{W}{\infty} \le 1, 
\NM{P}{\infty} \le 1
\;\text{and}\;
\NM{Q}{\infty} \le 1,
\label{eq:tvproxdual}
\end{eqnarray}
and the primal variable can be recovered as $X = Z - \frac{1}{\tau}(W + \mu D_v^{\top}P + \mu Q D_h^{\top})$.
By substituting the obtained $X$ into \eqref{eq:tvprox} and $\{ W, P, Q \}$ into \eqref{eq:tvproxdual},
the duality gap can be computed in
$O(mn)$ time.
As \eqref{eq:tvproxdual} is a smooth and convex problem, 
both accelerated gradient descent \citep{nesterov2013gradient} and L-BFGS \citep{nocedal2006numerical} can be applied.
Algorithm~\ref{alg:inexactAPG} is then guaranteed to converge to a critical point of \eqref{eq:ncvximgTV}
(Theorem~\ref{thm:convAPG} and Proposition~\ref{pr:inexactrate}).

Note that it is more advantageous to transform both the loss and regularizer in (\ref{eq:imgTVtrans}).
If only the regularizer  in
(\ref{eq:ncvximgTV})
is transformed,
we obtain
\begin{align}
\bar{f}_{\text{TV}}(X)
+ \sum_{i = 1}^m \sum_{j = 1}^n \kappa\left( \left| \left[Y - X\right]_{ij} \right| \right) 
+ \kappa_0 \mu \TV{X},
\label{eq:trantx1}
\end{align}
where
\begin{align*}
\bar{f}_{\text{TV}}(X) = \mu \left[ \sum_{i = 1}^{m - 1} \sum_{j = 1}^m \kappa\left( \left|\left[ D_v X \right]_{ij}\right|\right)
- \kappa_0 \NM{D_v X}{1}
+ \sum_{i = 1}^{n} \sum_{j = 1}^{n - 1} \kappa\left( \left|\left[ X D_h \right]_{ij}\right|\right) - \kappa_0 \NM{X D_h}{1} \right]
\end{align*}
is nonconvex.
The corresponding proximal step for \eqref{eq:trantx1} is
\begin{align}
\arg\min_X \frac{1}{2}\NM{X - Z}{F}^2 
+ \frac{1}{\tau}
\left( \sum_{i = 1}^m \sum_{j = 1}^n \kappa\left( \left| \left[Y - X\right]_{ij} \right| \right)  + \kappa_0 \mu \TV{X} \right). 
\label{eq:tran1prox}
\end{align}
While the proximal steps in both \eqref{eq:tvprox} and \eqref{eq:tran1prox} have no
closed-form solution,
working with \eqref{eq:tvprox} is more efficient. As 
\eqref{eq:tvprox} 
is convex, its dual 
can be efficiently solved with methods such as accelerated gradient descent and L-BFGS.
In contrast, \eqref{eq:tran1prox} is nonconvex, its duality gap is nonzero, and so can
only be solved in the primal with slower methods like CCCP
and smoothing.
Besides, 
one can only use
the more expensive nmAPG (Algorithm~\ref{alg:nmAPG})
but not the proposed inexact proximal algorithm.

\subsubsection{Robust Sparse Coding}
\label{sec:robostsc}

The second application is robust sparse coding,
which has been popularly used in 
face recognition \citep{yang2011robust},
image analysis \citep{lu2013online} and background modeling \citep{zhao2011background}.
Given an observed signal $y \in \R^m$, the goal is to seek a robust sparse representation
$x \in \R^d$ 
of $y$ based on the dictionary $D \in \R^{m \times d}$
(which is assumed to be fixed here).
Mathematically, it is formulated as the following optimization problem:
\[ \min_{x}
\NM{y - D x}{1} + \mu \NM{x}{1}. \]

Its nonconvex extension is:
\begin{align}
\min_{ x } 
\sum_{j = 1}^m \kappa( |[y - D x]_j| )  + \mu \sum_{i = 1}^d \kappa( |x_i| ).
\label{eq:ncvxrbstdic}
\end{align}
Using the proposed transformation, problem \eqref{eq:ncvxrbstdic} becomes
\begin{align}
\min_{ x } \bar{f}(x) + \kappa_0 \NM{y - D x}{1} 
+ \mu \kappa_0 \NM{x}{1},
\label{eq:ncvxrbstdic2}
\end{align}
where 
\begin{align*}
\bar{f}(x) = 
\mu \sum_{j = 1}^d \kappa( |x_j| ) -  \kappa_0 \mu \NM{x}{1}  
+ \sum_{j = 1}^m \kappa( |[y - D x]_j| )
- \kappa_0 \NM{y - D x}{1}
\end{align*}
is smooth and nonconvex.
Again, we use the inexact nmAPG algorithm in Algorithm~\ref{alg:inexactAPG}.
The proximal step for \eqref{eq:ncvxrbstdic2} is 
\begin{align}
\arg\min_{ x }
\frac{1}{2}\NM{x - z}{2}^2 + \frac{1}{\tau} ( \NM{y - D x}{1} + \mu \NM{x}{1} ),
\label{eq:proxdl}
\end{align}
where $\tau$ is the stepsize and
$z$ is
given.
As in Section~\ref{sec:tvdenoimg},
$\NM{y - D x}{1} + \mu \NM{x}{1} $ 
in \eqref{eq:proxdl} is convex, and one can monitor inexactness of the proximal step 
by the duality gap.
The dual of \eqref{eq:proxdl} is
\begin{align}
\min_{p, q} \frac{1}{2 \tau}\NM{D^{\top} p + \mu q}{2}^2 - p^{\top} D z - \mu q^{\top} z
\;:\;
\NM{p}{\infty} \le 1,
\NM{q}{\infty} \le 1.
\label{eq:rscdual}
\end{align}
As in \eqref{eq:tvproxdual},
this can be solved with L-BFGS 
or accelerated gradient descent.
The primal variable can be recovered as $x = z - \frac{1}{\tau} (D^{\top} p + \mu q)$,
and the duality gap can be checked in $O(m d)$ time.

If only the regularizer is transformed, we obtain
\begin{align}
\min_{ x } 
\sum_{j = 1}^m \kappa( |[y - D x]_j| ) + \bar{f}_{\text{RSC}}(x) + \kappa_0 \mu \NM{x}{1},
\label{eq:rsc}
\end{align}
where 
$\bar{f}_{\text{RSC}}(x) = 
\mu \sum_{j = 1}^d \kappa( |x_j| ) -  \kappa_0 \mu \NM{x}{1}$.
The corresponding proximal step 
is
\begin{align}
\arg\min_x \frac{1}{2} \NM{x - z}{2}^2
+ \sum_{j = 1}^m \kappa( |[y - D x]_j| ) 
+  \kappa_0 \mu \NM{x}{1},
\label{eq:rscprox}
\end{align}
which 
still involve
the nonconvex function $\kappa$.
As in 
Section~\ref{sec:tvdenoimg},
\eqref{eq:rscdual} 
is easier to solve 
than \eqref{eq:rscprox}.


\section{Experiments}
\label{sec:expt}

In this section, we perform experiments on using the proposed procedure
with (i) proximal algorithms (Sections~\ref{sec:expt1}
and \ref{sec:tree-expt}); (ii) Frank-Wolfe algorithm (Section~\ref{sec:fw-expt}); 
(iii) comparision with HONOR
(Section~\ref{sec:honor-expt})
and (vi) image denoising (Section~\ref{sec:imgden}).
Experiments are performed on a PC with Intel i7 CPU and 32GB memory.
All algorithms are implemented in Matlab.


\subsection{Nonconvex Sparse Group Lasso}
\label{sec:expt1}

In this section, we perform experiments on 
the nonconvex sparse group lasso model in Section~\ref{sec:gpslasso}.
For simplicity, assume that
$\mu_1 = \dots = \mu_K = \mu$.
Using the square loss,
\eqref{eq:sglasso} becomes 
\begin{equation} \label{eq:expt1}
\min_{x} \frac{1}{2} \NM{y - A^{\top} x}{2}^2 
\!+\! \lambda \sum_{i = 1}^d \kappa(|x_i|) \!+\! \mu \sum_{j = 1}^{K} \kappa (\NM{x_{\mathcal{G}_j}}{2}),
\end{equation} 
where
$A = [a_1, \dots, a_N]$.
In this experiment, we use 
the LSP regularizer in Table~\ref{tab:regdecomp} (with $\theta = 0.5$)
as $\kappa(\cdot)$.
The synthetic data set is generated as follows. Let $d=10000$.
The ground-truth parameter $\bar{x} \in \R^{10000}$ is divided into $100$ non-overlapping groups: 
$\{1, \dots, 100\}$, $\{101,
\dots, 200\}$, $\dots$, $\{9901,\dots,10000\}$
(Figure~\ref{fig:synx}).
We randomly set $75\%$ of the
groups
to zero.
In each nonzero group, we randomly set $25\%$ of its features to zero,  and
generate
the nonzero features from the standard normal distribution
$\mathcal{N}(0, 1)$.
The whole data set has
$20,000$ samples, and
entries of the input matrix $A 
\in \R^{10000 \times 20000}$ 
are generated
from 
$\mathcal{N}(0, 1)$.
The ground-truth output is $\bar{y} = A^{\top} \bar{x}$. This is then corrupted by
random 
Gaussian noise
$\epsilon$
in $\mathcal{N}(0, 0.05)$ to produce
$y = \bar{y} + \epsilon$.

\begin{figure}[ht]
\centering
\includegraphics[width = 0.45\textwidth]{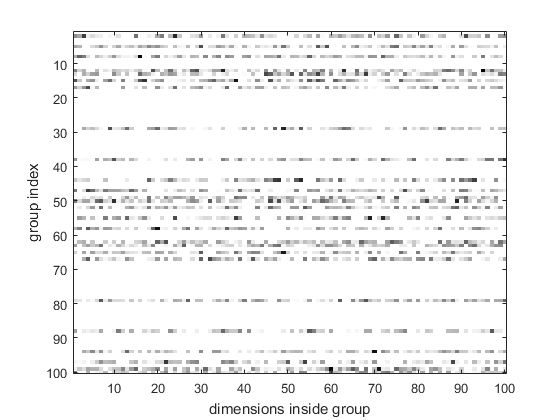}
\caption{An example ground-truth parameter $\bar{x} \in \R^{10000}$.
	It is reshaped as a 
	$100 \times 100$ 
	matrix, with each
	row representing
	a group.}
\label{fig:synx}
\end{figure}

The proposed algorithm will be called N2C (Nonconvex-to-Convex).
The proximal step of the
convexified regularizer 
$\breve{g}(x) = \kappa_0 ( \lambda \NM{x}{1} + \sum_{j = 1}^{K} \mu_j \NM{x_{\mathcal{G}_j}}{2} )$
is
obtained using the algorithm in \citep{yuan2011efficient}.
The nmAPG algorithm 
(Algorithm~\ref{alg:nmAPG})
in \citep{li2015accelerated} 
is used for optimization.
This will be compared with the following state-of-the-art algorithms:
\begin{enumerate}[itemsep = 0cm, topsep=0.125cm]
\item 
SCP: 
Sequential convex programming 
\citep{zhaosong2012}, in which the LSP regularizer is decomposed following \eqref{eq:spgupDCg1}.  

\item GIST \citep{gongZLHY2013}: Since the nonconvex regularizer is not separable, 
the associated proximal operator
has no closed-form solution.
Instead, we use SCP 
(with warm-start)
to solve it numerically.

\item 
GD-PAN
\citep{zhongK2014gdpan}:
It performs gradient descent with proximal average \citep{bauschke2008proximal} 
of the nonconvex regularizers.
Closed-form  solutions for the proximal operator of each regularizer are obtained 
separately, and then averaged.
	
\item nmAPG with the original nonconvex regularizer:
As in GIST, the proximal step is solved numerically by
SCP.

\item As a baseline, we also compare with the FISTA \citep{beck2009fast} algorithm,
which solves 
the convex sparse group lasso model (with 
$\kappa$
removed
from \eqref{eq:expt1}).
\end{enumerate}
We do not compare
with the concave-convex procedure \citep{yuille2002concave},
which has been shown to be slow \citep{gongZLHY2013,zhongK2014gdpan}.

We use
$50 \%$ of the data for training, another $25 \%$ as validation set to 
tune $\lambda, \mu$ in (\ref{eq:expt1}),
and the rest for testing.
The stepsize 
is fixed  at
$\tau = \sigma_{1}(A^{\top} A)$.
For performance evaluation, we use the
(i) testing root-mean-squared error (RMSE) on the predictions;
(ii) absolute error between the obtained parameter 
$\hat{x}$ with ground-truth $\bar{x}$:
$\text{ABS} = \NM{\hat{x} - \bar{x}}{1}/d$; and (iii) CPU time.
To reduce statistical variability, the experimental results are averaged over 5
repetitions.

Results are shown in Table~\ref{tab:syn:recover}.
As can be seen, all the nonconvex models obtain better errors (RMSE and ABS) than the convex
FISTA. As for the training speed, 
N2C is the fastest.
SCP, GIST, nmAPG and N2C targets the original problem \eqref{eq:pro},
and they have the same recovery performance.
GD-PAN solves an approximate problem in each of its iterations, and its error is slightly worse than the other nonconvex algorithms on this data set.

\begin{table}[ht]
	\centering
	\caption{Results on nonconvex sparse group lasso.  RMSE and ABS are scaled by $10^{-3}$, and the CPU time is in seconds.  The best and comparable results (according to the pairwise t-test with 95\% confidence)
		are highlighted.}
	\label{tab:syn:recover}
	\begin{tabular}{c | c | c  | c | c | c || c }
		\hline
		&             \multicolumn{3}{c|}{non-accelerated}              &       \multicolumn{2}{c||}{accelerated}        & convex        \\
		& SCP                   & GIST                  & GD-PAN        & nmAPG                 & N2C                    & FISTA         \\ \hline
		RMSE      & \textbf{50.6$\pm$2.0} & \textbf{50.6$\pm$2.0} & 52.3$\pm$2.0  & \textbf{50.6$\pm$2.0} & \textbf{50.6$\pm$2.0}  & 53.8$\pm$1.7  \\ \hline
		ABS      & \textbf{5.7$\pm$0.2}  & \textbf{5.7$\pm$0.2}  & 7.1$\pm$0.4   & \textbf{5.7$\pm$0.2}  & \textbf{5.7$\pm$0.2}   & 10.6$\pm$0.3  \\ \hline
		CPU time(sec)  & 0.84$\pm$0.14         & 0.92$\pm$0.12         & 0.94$\pm$0.22 & 0.65$\pm$0.06         & \textbf{0.48$\pm$0.05} & 0.79$\pm$0.14 \\ \hline
	\end{tabular}
\end{table}

Figure~\ref{fig:syn:comp} 
shows convergence of the objective with time and iterations 
for a typical run. 
SCP, GIST, nmAPG and N2C all converge towards the same objective value.
GD-PAN can only approximate the original problem.
Thus, it converges to an objective value which is larger than others.
nmAPG and N2C are based on the state-of-the-art proximal algorithm (Algorithm~\ref{alg:nmAPG}.
Both require nearly the same number of iterations for convergence (Figure~\ref{fig:syn:comp1}).
However, as N2C has cheap closed-form solution for its proximal step,
it is much faster when measured in terms of time (Figure~\ref{fig:syn:comp2}).
Overall,
N2C,
which uses acceleration and inexpensive proximal step,
is the fastest.

\begin{figure}[ht]
\centering
\subfigure[objective vs CPU time.]
{
\includegraphics[width = 0.45\textwidth]{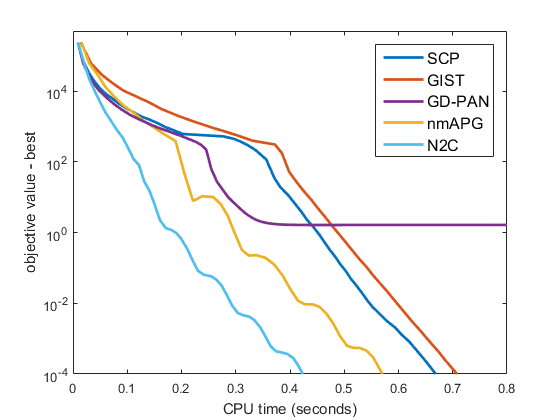}
\label{fig:syn:comp1}
}
\subfigure[objective vs iterations.]
{
\includegraphics[width = 0.45\textwidth]{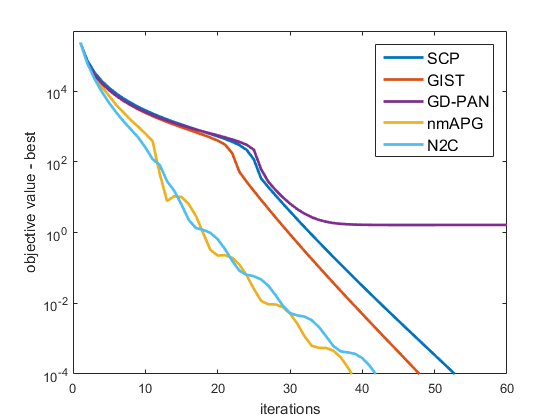}
\label{fig:syn:comp2}
}
\caption{Convergence of algorithms on nonconvex sparse group lasso. 
		FISTA is not shown as its (convex) objective is different from the others.}
\label{fig:syn:comp}
\end{figure}


\subsection{Nonconvex Tree-Structured Group Lasso}
\label{sec:tree-expt}

In this section,
we perform experiments on the nonconvex tree-structured group lasso model
in Section~\ref{sec:tree}.
We use the face data set \textit{JAFFE}\footnote{\url{http://www.kasrl.org/jaffe.html}}, which
contains $213$ images with seven facial expressions: 
anger, disgust, fear, happy, neutral, sadness and surprise.
Following \citep{liu2010moreau}, 
we resize each 
image 
from
$256 \times 256$ 
to $64 \times 64$. 
Their tree structure,
which is based on pixel neighborhoods,
is also used here.
The total 
number of groups 
$K$ is $85$.  

Since our goal is only to demonstrate usefulness of the proposed convexification scheme,  
we focus on the binary classification problem ``anger vs not-anger'' 
(with 30 anger images and 183 non-anger images).
The logistic loss is used, which is more appropriate
for classification.
Given training samples $\{(a_1,y_1),\dots, (a_N,y_N)\}$, 
the optimization problem is then 
\begin{align*}
\min_x\sum_{i = 1}^N \! w_i \log(1 + \exp( - y_i \cdot a_i^{\top} x ) ) 
+ \mu \sum_{i = 1}^{K} \lambda_i \kappa ( \NM{x_{\mathcal{G}_i}}{2} ),
\end{align*}
where 
$\kappa(\cdot)$ is the LSP regularizer (with $\theta=0.5$),, 
$w_i$'s are weights 
(set to be the reciprocal of the size of sample $i$'s class) used to alleviate class
imbalance, and 
$\lambda_i = 1 / \sqrt{\NM{\mathcal{G}_i}{1}}$ as in \citep{liu2010moreau}.
We use 
$60\%$ of the data 
for training, $20\%$ for validation 
and the rest for testing.
For the proposed N2C algorithm,
the proximal step of the convexified regularizer is obtained as in \citep{liu2010moreau}.

As in Section~\ref{sec:expt1}, 
it is compared with SCP, GIST, GD-PAN, nmAPG, and 
FISTA.
The stepsize $\eta$ is obtained by line search.
For performance evaluation, we use (i) the testing accuracy;
(ii) solution sparsity (i.e., percentage of nonzero elements);
and (iii) CPU time.
To reduce statistical variability, the experimental results are averaged over 5 repetitions.

Results are shown in Table~\ref{tab:face:recover}.
As can be seen,
all nonconvex models have similar testing accuracies, and they again outperform the
convex model. Moreover,
solutions
from the nonconvex models are sparser.
Overall, N2C is the fastest and has the
sparsest solution.

\begin{table}[ht]
	\centering
	\caption{Results on tree-structured group lasso.
		The best and comparable results (according to the pairwise t-test with 95\% confidence) are highlighted.}
	\begin{tabular}{c | c | c | c | c | c || c }
		\hline
		                      &                 \multicolumn{3}{c|}{non-accelerated}                  &       \multicolumn{2}{c||}{accelerated}       & convex       \\
		                      & SCP                   & GIST                  & GD-PAN                & nmAPG                 & N2C                   & FISTA        \\ \hline
		testing accuracy (\%) & \textbf{99.6$\pm$0.9} & \textbf{99.6$\pm$0.9} & \textbf{99.6$\pm$0.9} & \textbf{99.6$\pm$0.9} & \textbf{99.6$\pm$0.9} & 97.2$\pm$1.8 \\ \hline
		    sparsity (\%)     & 5.5$\pm$0.4           & 5.7$\pm$0.4           & 6.9$\pm$0.4           & 5.4$\pm$0.3           & \textbf{5.1$\pm$0.2}  & 9.2$\pm$0.2  \\ \hline
		    CPU time(sec)     & 7.1$\pm$1.6           & 50.0$\pm$8.1          & 14.2$\pm$2.6          & 3.8$\pm$0.4           & \textbf{1.9$\pm$0.3}  & 1.0$\pm$0.4  \\ \hline
	\end{tabular}
	\label{tab:face:recover}
\end{table}

Figure~\ref{fig:face:comp} shows 
convergence of the algorithms versus CPU time and number of iterations.
As can be seen, N2C is the fastest. 
GIST is the slowest, as it does not utilize acceleration and its proximal step is solved numerically which is expensive.
GD-PAN converges to a less optimal solution due to its use of approximation.
Moreover,
as in Section~\ref{sec:expt1},
nmAPG and N2C 
show similar convergence behavior w.r.t. the number of iterations (Figure~\ref{fig:face:comp2}),
but 
N2C is much faster w.r.t. time (Figure~\ref{fig:face:comp1}).

\begin{figure}[ht]
\centering
\subfigure[objective vs CPU time (seconds).]
{\includegraphics[width = 0.45\columnwidth]{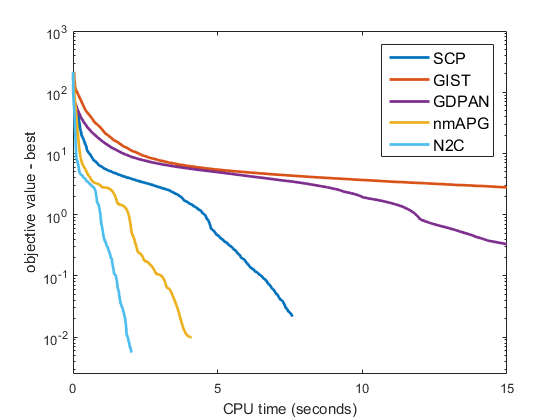}
\label{fig:face:comp1}}
\subfigure[the objective vs iterations.]
{\includegraphics[width = 0.45\columnwidth]{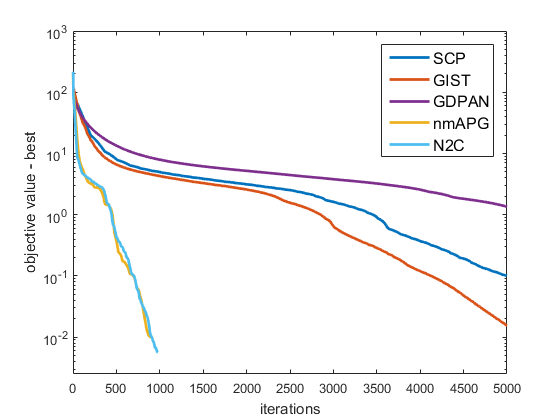}
\label{fig:face:comp2}}
\caption{Convergence of algorithms on nonconvex tree-structured group lasso.}
\label{fig:face:comp}
\end{figure}


\subsection{Nonconvex Low-Rank Matrix Completion}
\label{sec:fw-expt}

In this section, 
we perform experiments on nonconvex low-rank matrix completion (Section~\ref{sec:noncvxFW}),
with square loss in \eqref{eq:lowrank2}.
The LSP regularizer is used,
with $\theta = \sqrt{\mu}$
as in \citep{qyao2015icdm}.
We use 
the \textit{MovieLens} data sets\footnote{\url{http://grouplens.org/data sets/movielens/}}
(Table~\ref{tab:recsys:dataset}),
which have been
commonly used for evaluating matrix completion 
\citep{hsieh2014nuclear,qyao2015icdm}.
They contain ratings $\{1,2,\dots,5\}$ assigned by various users on
movies.

\begin{table}[ht]
	\centering
	\caption{\textit{MovieLens} data sets used in the experiment.}
	\begin{tabular}{ c | c | c | c}
		\hline
		              & \#users & \#items & \#ratings  \\ \hline
		\textit{100K} & 943     & 1,682   & 100,000    \\ \hline
		\textit{1M}   & 6,040   & 3,449   & 999,714    \\ \hline
		\textit{10M}  & 69,878  & 10,677  & 10,000,054 \\ \hline
	\end{tabular}
	\label{tab:recsys:dataset}
\end{table}

The proposed Frank-Wolfe procedure 
(Algorithm~\ref{alg:novelfw}),
denoted N2C-FW, 
is compared with  the following algorithms:
\begin{enumerate}
\item FaNCL
\citep{qyao2015icdm}:
This is a recent nonconvex matrix regularization algorithm.
It is based on the proximal algorithm 
using efficient approximate SVD  and automatic thresholding of singular values.
	\item LMaFit \citep{wen2012solving}: 
	It factorizes $X$ as a product of low-rank matrices
	$U \in \R^{m \times k}$ and $V \in \R^{n \times k}$.
	The nonconvex objective $\frac{1}{2}\NM{P_{\Omega}(U V^{\top} - O)}{F}^2$
	is then 
	minimized by 
	alternating  minimization
	on $U$ and $V$
	using gradient descent.
	
	\item Active subspace selection (denoted ``active'') \citep{hsieh2014nuclear}: This
	solves the (convex) nuclear norm regularized problem (with $\kappa$ being the identity function in
	\eqref{eq:lowrank}) by using the active row/column subspaces
	to reduce the 
	optimization problem
	size.
\end{enumerate}
We do not compare with IRNN \citep{lu2014generalized} and GPG \citep{lu2015generalized}, which 
have been shown to be much slower than FaNCL \citep{qyao2015icdm}.

Following \citep{qyao2015icdm},
we use $50\%$ of the ratings for training, $25\%$ for validation and the rest for testing.
For performance evaluation, we use
(i) the testing RMSE; and
(ii) the recovered rank.
To reduce statistical variability, the experimental results are averaged
over 5 repetitions.

Results are shown in Table~\ref{tab:matcomp}.
As can be seen, the
nonconvex models (N2C-FW, FaNCL and LMaFit) achieve lower RMSEs than the convex model
(active), with N2C-FW having the smallest RMSE.
Moreover, the convex model needs a much higher rank than the nonconvex models,
	which agrees with the previous observations in \citep{mazumder2010spectral,qyao2015icdm}.
Thus, its running time is also much longer than the others.
Figure~\ref{fig:matcomp}
shows the convergence of the objective with CPU time.
As the recovered matrixs rank for the nonconvex models are very low (2 to 9 in 
Table~\ref{tab:matcomp}),
N2C-FW 
is much faster than the others as it
starts from a rank-one matrix and only increases its rank by one in each iteration.
Though 
FaNCL
uses singular value thresholding to truncate the SVD,
it does not control the rank as directly as N2C-FW and so is still slower.

\begin{table}[ht]
\centering
\caption{Results on the \textit{MovieLens} data sets.
	The best RMSE's (according to the pairwise t-test with 95\% confidence)
	are highlighted.}
\begin{tabular}{c|c|c|c|c}
	\hline
&        &      RMSE       & rank &     CPU time(sec)      \\ \hline
	\textit{100K} & N2C-FW & \textbf{0.855$\pm$0.004} &       2       &          \textbf{0.2$\pm$0.1}          \\ \cline{2-5}
   & FaNCL  &     0.857$\pm$0.003      &       2       &          0.4$\pm$0.1          \\ \cline{2-5}
   & LMaFit &     0.867$\pm$0.004      &       2       &          0.3$\pm$0.1          \\ \cline{2-5}
   & (convex) active &     0.875$\pm$0.002      &      52       &      1.8$\pm$0.1       \\ \hline
 \textit{1M}  & N2C-FW & \textbf{0.785$\pm$0.001} &       5       &  \textbf{9.3$\pm$0.1}  \\ \cline{2-5}
   & FaNCL  &     0.786$\pm$0.001      &       5       &      16.6$\pm$0.6      \\ \cline{2-5}
   & LMaFit &     0.812$\pm$0.002      &       5       &      14.7$\pm$0.7      \\ \cline{2-5}
   & (convex) active &     0.811$\pm$0.001      &      106      &      46.3$\pm$1.1      \\ \hline
	\textit{10M}  & N2C-FW & \textbf{0.778$\pm$0.001} &       9       & \textbf{313.0$\pm$6.6} \\ \cline{2-5}
   & FaNCL  &     0.779$\pm$0.001      &       9       &     615.7$\pm$13.2     \\ \cline{2-5}
   & LMaFit &     0.797$\pm$0.001      &       9       &     491.9$\pm$36.3     \\ \cline{2-5}
   & (convex) active &     0.808$\pm$0.001      &      137      &    1049.8$\pm$43.2     \\ \hline
\end{tabular}
\label{tab:matcomp}
\end{table}

\begin{figure}[ht]
	\centering
	\subfigure[\textit{MovieLens-100K}.]
	{\includegraphics[width = 0.45\textwidth]{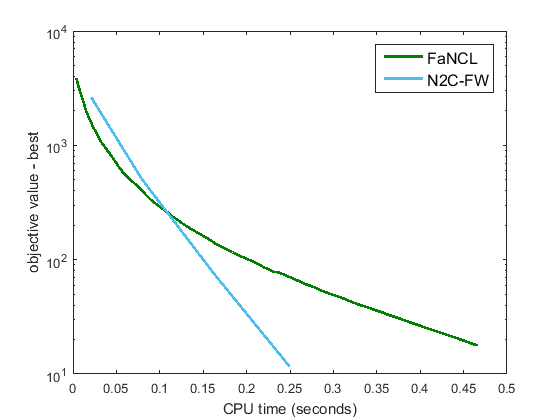}}
	\subfigure[\textit{MovieLens-1M}.]
	{\includegraphics[width = 0.45\textwidth]{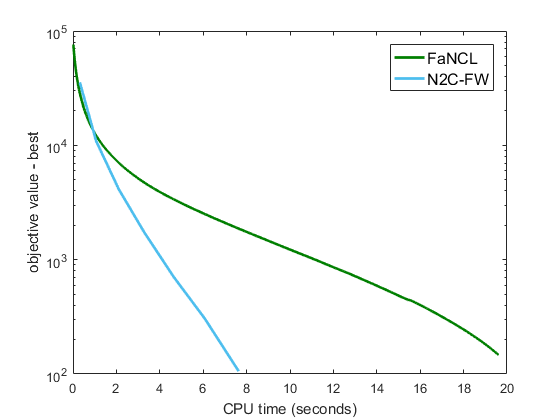}}
	\subfigure[\textit{MovieLens-10M}.]
	{\includegraphics[width = 0.45\textwidth]{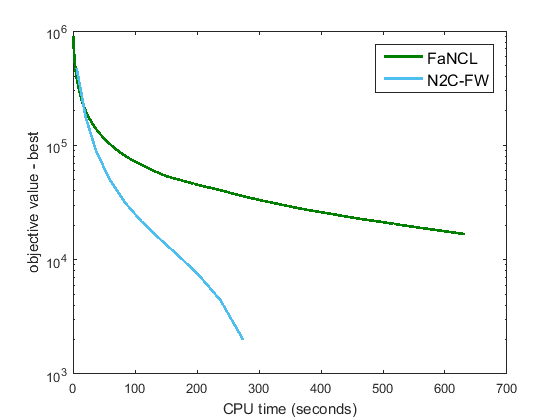}}
	\caption{Convergence of the objective vs CPU time on nonconvex low-rank matrix completion.
		The objectives 
		of LMaFit and active subspace selection  are different  from N2C-FW,
		and thus are 
		not shown.}
	\label{fig:matcomp}
\end{figure}


\subsection{Comparison with HONOR}
\label{sec:honor-expt}

In this section, we 
experimentally
compare the proposed method with 
HONOR (Section~\ref{sec:quasiNT})
on the  model in (\ref{eq:lasso}),
using the logistic loss and LSP regularizer.
Following \citep{gong2015honor}, 
we fix $\mu = 1$ in 
(\ref{eq:lasso}),
and $\theta$ in
the LSP regularizer to
$0.01 \mu$.
Experiments are performed on three large data sets,
\textit{kdd2010a}, \textit{kdd2010b} and \textit{url}
\footnote{\url{https://www.csie.ntu.edu.tw/~cjlin/libsvmtools/datasets/binary.html}}
(Table~\ref{tab:dataoglasso}).
Both \textit{kdd2010a} and \textit{kdd2010b} are educational data sets, and the task is to predict students' successful attempts to
answer concepts related to algebra. The
\textit{url} data set contains a collection of websites, and the task is to predict whether a
particular website is malicious.
We compare  
\begin{enumerate}
\item running HONOR  \citep{gong2015honor} directly on (\ref{eq:lasso}).
The threshold of the hybrid step  
in HONOR  
is set
to $10^{-10}$, which yields the best empirical performance in 
\citep{gong2015honor};
\item running
mOWL-QN \citep{gong2015modified}) on the transformed problem (\ref{eq:lasso:equiv}).
\end{enumerate}
To reduce statistical variability, the experimental results are averaged over 5 repetitions.

\begin{table}[ht]
	\centering
	\caption{Data sets used in the comparison with HONOR.}
	\begin{tabular}{c | c | c | c}
		\hline
		                   & \textit{kdd2010a} & \textit{kdd2010b} & \textit{url} \\ \hline
		number of samples  & 510,302           & 748,401           & 2,396,130    \\ \hline
		number of features & 20,216,830        & 29,890,095        & 3,231,961    \\ \hline
	\end{tabular}
\label{tab:dataoglasso}
\end{table}

As 
(\ref{eq:lasso}) and
(\ref{eq:lasso:equiv})
have the same optimization objective,
Figure~\ref{fig:honor} shows the
convergence of the objective
with CPU time.
As can be seen, mOWL-QN converges faster than HONOR.
This validates our claim that the curvature information of the nonconvex regularizer helps.

\begin{figure}[ht]
	\centering
	\subfigure[\textit{kdd2010a}.]
	{\includegraphics[width = 0.45\textwidth]{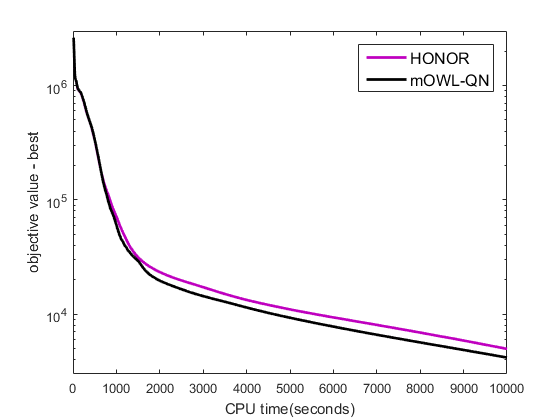}}
	\subfigure[\textit{kdd2010b}.]
	{\includegraphics[width = 0.45\textwidth]{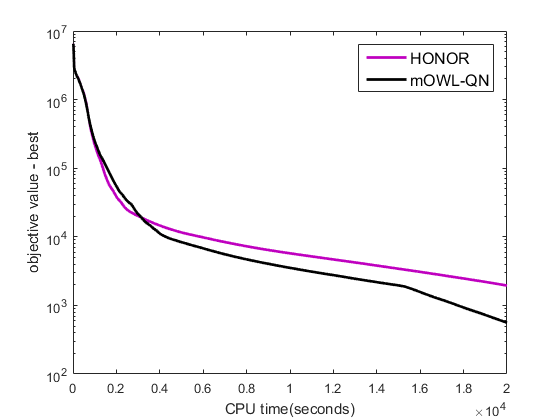}}
	\subfigure[\textit{url}.]
	{\includegraphics[width = 0.45\textwidth]{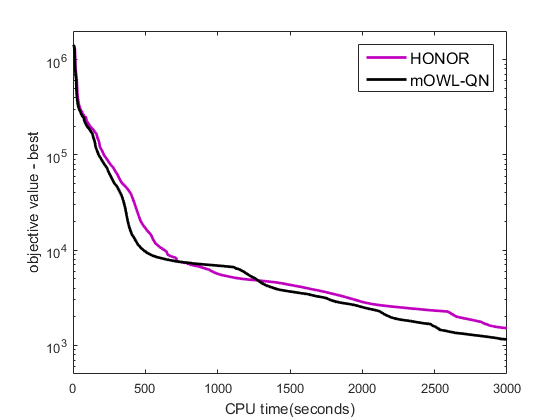}}
	\caption{Convergence of the objective vs CPU time 
		for HONOR 
		and mOWL-QN.}
	\label{fig:honor}
\end{figure}


\subsection{Image Denoising}
\label{sec:imgden}

In this section, we perform experiments on total variation image denoising with nonconvex loss and nonconvex
regularizer
(as introduced in Section~\ref{sec:tvdenoimg}).
The LSP function 
(with $\theta = 1$)
is used as $\kappa$ 
in \eqref{eq:ncvximgTV}
on both the loss and regularizer.
Eight popular images\footnote{\url{http://www.cs.tut.fi/~foi/GCF-BM3D/}}
from \citep{dabov2007image} 
are used
(Figure~\ref{fig:stdimage}).
They are then corrupted by
pepper-and-salt noise,
with $10\%$ of the pixels 
randomly 
set to $0$ or $255$ with equal probabilities.

For performance evaluation,
we use the 
$\text{RMSE} = \sqrt{\frac{1}{mn}\sum_{i = 1}^m \sum_{j = 1}^n (X_{ij} - \bar{X}_{ij})^2}$,
where $\bar{X} \in \R^{m \times n}$ is the clean image, and $X \in \R^{m \times n}$ is the recovered image.
To tune $\mu$, we pick the value that leads to the smallest RMSE on 
the first four  images
(\textit{boat}, \textit{couple}, \textit{fprint}, \textit{hill}).
Denoising performance is then reported 
on the
remaining images 
(\textit{house}, \textit{lena}, \textit{man}, \textit{peppers}).

\begin{figure}[ht]
\centering
\includegraphics[width = 0.115\textwidth]{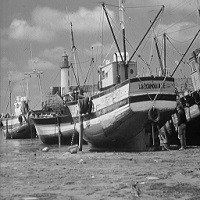}
\includegraphics[width = 0.115\textwidth]{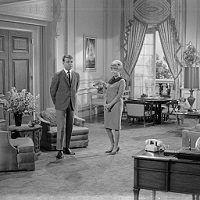}
\includegraphics[width = 0.115\textwidth]{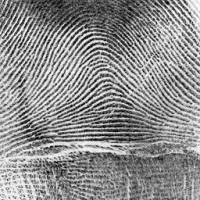}
\includegraphics[width = 0.115\textwidth]{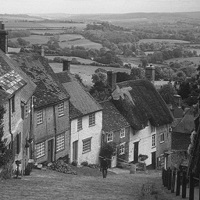}
\includegraphics[width = 0.115\textwidth]{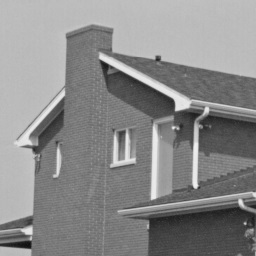}
\includegraphics[width = 0.115\textwidth]{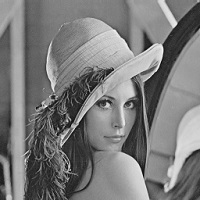}
\includegraphics[width = 0.115\textwidth]{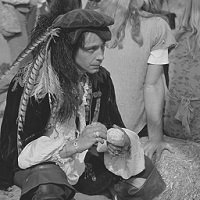}
\includegraphics[width = 0.115\textwidth]{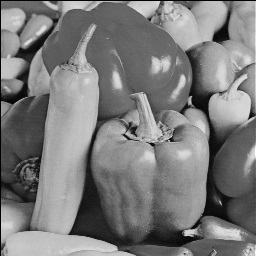}

\subfigure[\textit{boat}.]{\includegraphics[width = 0.115\textwidth]{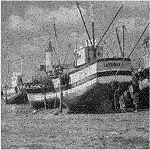}}
\subfigure[\textit{couple}.]{\includegraphics[width = 0.115\textwidth]{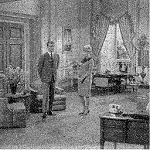}}
\subfigure[\textit{fprint}.]{\includegraphics[width = 0.115\textwidth]{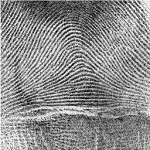}}
\subfigure[\textit{hill}.]{\includegraphics[width = 0.115\textwidth]{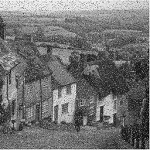}}
\subfigure[\textit{house}.]{\includegraphics[width = 0.115\textwidth]{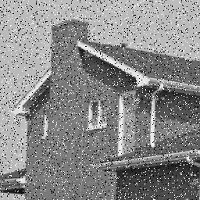}}
\subfigure[\textit{lena}.]{\includegraphics[width = 0.115\textwidth]{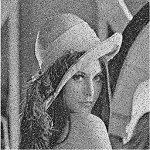}}
\subfigure[\textit{man}.]{\includegraphics[width = 0.115\textwidth]{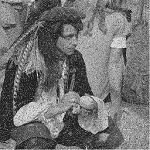}}
\subfigure[\textit{peppers}.]{\includegraphics[width = 0.115\textwidth]{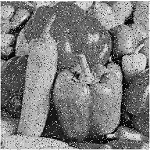}}

\caption{Samples images used in the denoising experiment.  Top: Clean images; Bottom: Noisy images.}
\label{fig:stdimage}
\end{figure}

The following algorithms will be compared:
\begin{enumerate}[itemsep = 0cm, topsep=0.125cm]
\item CCCP \citep{yuille2002concave}: 
	Proposition~\ref{pr:anotherDC} is used to construct DC decomposition for $\kappa$
	(Details are at Appendix~\ref{sec:app:CCCP});

\item Smoothing \citep{chen2012smoothing}:
	The nonsmooth $\kappa$ is smoothed,
	and then gradient descent is used
	(Details are at Appendix~\ref{sec:app:Smoothing});

\item nmAPG \citep{li2015accelerated}:
This optimizes	\eqref{eq:trantx1} with Algorithm~\ref{alg:nmAPG},
	and the exact proximal step is solved numerically using CCCP;
	
\item inexact-nmAPG: This optimizes \eqref{eq:imgTVtrans} with Algorithm~\ref{alg:inexactAPG}
	(with $\epsilon_t = 0.95^t$),
	and the inexact proximal step is solved numerically using L-BFGS.
	
	\item As a baseline, we also compare with ADMM \citep{boyd2011distributed} with the convex formulation.
\end{enumerate}

To reduce statistical variability, the experimental results are averaged over 5 repetitions.
The RMSE results are shown in Table~\ref{tab:imageRMSE}.
As can be seen, the (convex) ADMM formulation leads to the highest RMSE,
while CCCP, smoothing, nmAPG and inexact-nmAPG have the same RMSE which is lower than that of ADMM.
This agrees with previous observations that nonconvex formulations can yield better
performance than the convex ones.
Timing results are shown in 
Table~\ref{tab:imagetime} and Figure~\ref{fig:imagetime}.
As can be seen, smoothing has low iteration complexity but suffers from slow convergence.
CCCP and nmAPG both need to exactly solve a subproblem, and thus are also slow.
The inexact-nmAPG algorithm does not guarantee the objective value to be
monotonically decreasing as iteration proceeds. As the inexactness is initially large,
there is an initial spike in the objective. However, 
inexact-nmAPG then quickly converges, and is much faster than all the baselines.

\begin{table}[ht]
\centering
\caption{RMSE for image denoising.
The best RMSE's (according to the pairwise t-test with 95\% confidence) are highlighted.}
\label{tab:imageRMSE}
\begin{tabular}{c| c | c | c | c }
	\hline
	              & \textit{house}             & \textit{lena}              & \textit{man}               & \textit{peppers}           \\ \hline
	    CCCP      & \textbf{0.0205$\pm$0.0010} & \textbf{0.0174$\pm$0.0005} & \textbf{0.0223$\pm$0.0002} & \textbf{0.0207$\pm$0.0009} \\ \hline
	  smoothing   & \textbf{0.0205$\pm$0.0011} & \textbf{0.0174$\pm$0.0005} & \textbf{0.0223$\pm$0.0002} & \textbf{0.0207$\pm$0.0009} \\ \hline
	    nmAPG     & \textbf{0.0205$\pm$0.0010} & \textbf{0.0174$\pm$0.0005} & \textbf{0.0223$\pm$0.0002} & \textbf{0.0207$\pm$0.0009} \\ \hline
	inexact-nmAPG & \textbf{0.0205$\pm$0.0010} & \textbf{0.0174$\pm$0.0005} & \textbf{0.0223$\pm$0.0002} & \textbf{0.0207$\pm$0.0009} \\ \hline\hline
	(convex) ADMM & 0.0223$\pm$0.0011          & 0.0193$\pm$0.0005          & 0.0242$\pm$0.0002          & 0.0229$\pm$0.0008          \\ \hline
\end{tabular}
\end{table}

\begin{table}[ht]
\centering
\caption{CPU time (seconds) for image denoising.
	The shortest CPU time (according to the pairwise t-test with 95\% confidence) are highlighted.}
\label{tab:imagetime}
\begin{tabular}{c| c | c | c | c }
	\hline
	              & \textit{house}        & \textit{lena}         & \textit{man}          & \textit{peppers}     \\ \hline
	    CCCP      & 21.0$\pm$2.3          & 270.0$\pm$13.0        & 325.3$\pm$17.4        & 14.5$\pm$1.2         \\ \hline
	  smoothing   & 75.5$\pm$2.0          & 433.1$\pm$4.8         & 437.7$\pm$6.8         & 61.9$\pm$1.7         \\ \hline
	    nmAPG     & 19.4$\pm$2.3          & 91.4$\pm$7.3          & 104.4$\pm$2.7         & 16.1$\pm$1.8         \\ \hline
	inexact-nmAPG & \textbf{10.3$\pm$1.1} & \textbf{37.9$\pm$5.0} & \textbf{43.0$\pm$7.6} & \textbf{8.1$\pm$0.2} \\ \hline\hline
	(convex) ADMM & 3.0$\pm$0.1           & 42.8$\pm$1.1          & 46.9$\pm$1.0          & 2.2$\pm$0.1          \\ \hline
\end{tabular}
\end{table}

\begin{figure}[ht]
	\centering
	\subfigure[\textit{house}.]
	{\includegraphics[width = 0.45\textwidth]{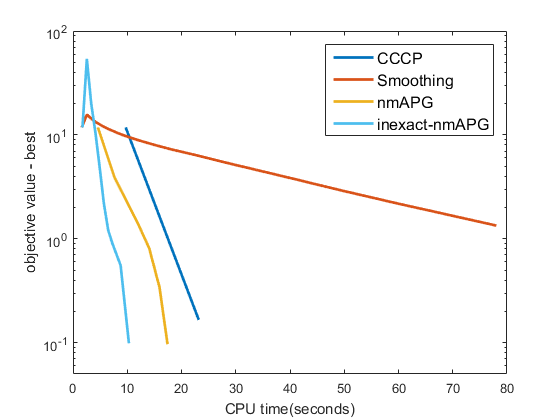}}
	\subfigure[\textit{lena}.]
	{\includegraphics[width = 0.45\textwidth]{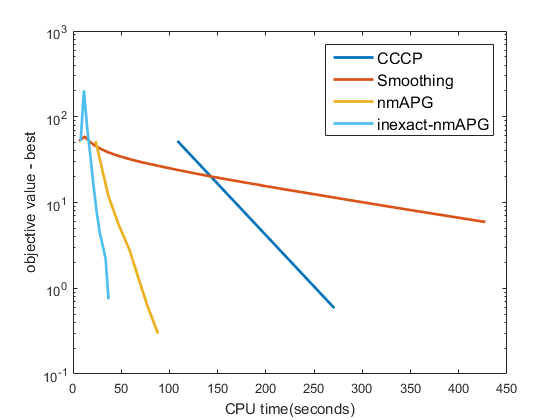}}
	\subfigure[\textit{man}.]
	{\includegraphics[width = 0.45\textwidth]{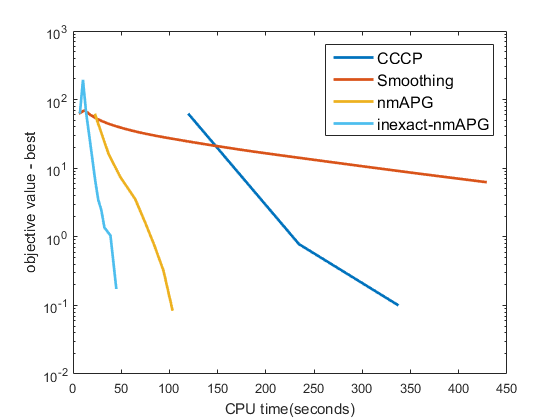}}
	\subfigure[\textit{peppers}.]
	{\includegraphics[width = 0.45\textwidth]{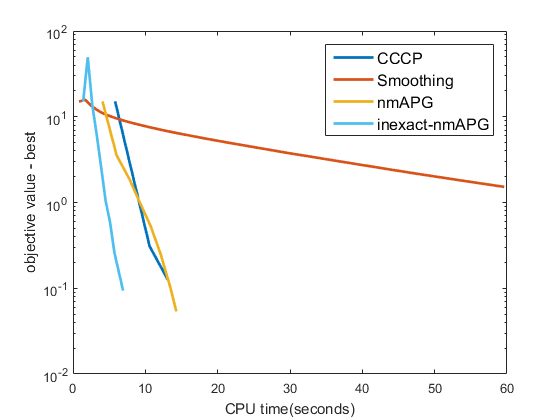}}
	\caption{CPU time (seconds) vs objective value on different images.}
	\label{fig:imagetime}
\end{figure}


\section{Conclusion}
\label{sec:conc}

In this paper, 
we proposed  a novel approach to learning with nonconvex regularizers.
By
moving the nonconvexity associated with the 
nonconvex regularizer
to the loss,  the 
nonconvex regularizer is convexified to become a familiar convex regularizer while the augmented
loss is still Lipschitz smooth. 
This allows one to reuse efficient algorithms originally designed for convex regularizers 
on the transformed problem.
To illustrate usages with the proposed transformation, 
we plug it into many popular optimization algorithms.
First,
we consider the proximal algorithm,
and showed that 
while 
the proximal step 
is expensive on the original problem,
it becomes much easier on the transformed problem.
We further propose an inexact proximal algorithm,
which allows inexact update of proximal step when it does not have a closed-form solution.
Second,
we combine the proposed convexification scheme with the Frank-Wolfe algorithm on learning
low-rank matrices, and showed that
its crucial linear programming step becomes cheaper and more easily solvable.
As no convergence results exist on this nonconvex problem, 
we designed a novel Frank-Wolfe algorithm based on the proposed transformation and with convergence guarantee.
Third,
when using with ADMM and SVRG, 
we showed that the existing convergence results can be applied on the transformed problem but
not on the original one.
We further extend the proposed transformation to handle nonconvex and nonsmooth loss functions,
and illustrate its benefits on the total variation model and robust sparse coding.
Finally,
we demonstrate  the empirical advantages of working with the transformed problems on various
tasks with both synthetic and real-world data sets. Experimental results show that better performance 
can be obtained with nonconvex regularizers,
and algorithms on the transformed problems run much faster than the state-of-the-art on the original problems.

\appendix
\section{Proofs}
\label{sec:proof}


\subsection{Proposition~\ref{prop:smooth}}
\label{app:keyobv}



\begin{proof}
First, we introduce a few Lemmas.

\begin{lemma}\citep{golub2012matrix}
For $x \neq 0$, the gradient of the $\ell_2$-norm is $\nabla_{x_i} \NM{x}{2} = x_i/\NM{x}{2}$.
	\label{lem:gradnorm}
\end{lemma}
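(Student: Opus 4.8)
The final statement to prove is Lemma~\ref{lem:gradnorm} itself: that for $x \neq 0$, the gradient of the $\ell_2$-norm satisfies $\nabla_{x_i} \NM{x}{2} = x_i/\NM{x}{2}$.

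\textbf{The plan.} The statement is a direct computation of a partial derivative, so I would simply differentiate the defining expression $\NM{x}{2} = \left(\sum_{j=1}^d x_j^2\right)^{1/2}$ componentwise. First I would write $\NM{x}{2} = \phi(x)^{1/2}$ where $\phi(x) = \sum_{j=1}^d x_j^2$, noting that for $x \neq 0$ we have $\phi(x) > 0$, so the square-root function is differentiable at $\phi(x)$ and the composition is smooth there. This positivity at $x \neq 0$ is exactly why the hypothesis $x \neq 0$ is needed: at the origin the square root is applied at $0$, where it is not differentiable.

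\textbf{The main step.} Applying the chain rule to $\NM{x}{2} = \phi(x)^{1/2}$ gives
\begin{equation}
\nabla_{x_i} \NM{x}{2} = \frac{1}{2} \phi(x)^{-1/2} \cdot \frac{\partial \phi}{\partial x_i} = \frac{1}{2} \phi(x)^{-1/2} \cdot 2 x_i = \frac{x_i}{\phi(x)^{1/2}} = \frac{x_i}{\NM{x}{2}},
\end{equation}
since $\partial \phi / \partial x_i = \partial \left(\sum_j x_j^2\right)/\partial x_i = 2 x_i$. This is the claimed identity.

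\textbf{Expected obstacle.} There is essentially no obstacle here: the result is an elementary calculus fact, and the only subtlety worth flagging is the role of the hypothesis $x \neq 0$, which guarantees $\NM{x}{2} > 0$ so that the division and the differentiability of the outer square root are both legitimate. I would therefore keep the proof to the one-line chain-rule computation above, preceded by the remark that differentiability fails precisely at $x = 0$, and conclude.
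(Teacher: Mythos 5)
Your proof is correct: the chain-rule computation on $\phi(x)^{1/2}$ with $\phi(x) = \sum_j x_j^2$ is the standard derivation, and you rightly identify that the hypothesis $x \neq 0$ is exactly what makes the outer square root differentiable and the division legitimate. Note that the paper itself offers no proof of this lemma --- it is cited directly from \citep{golub2012matrix} as a known fact --- so your one-line argument is the natural way to fill in that reference.
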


Let $h(z) = \kappa(\NM{z}{2}) - \kappa_0 \NM{z}{2}$. 
\begin{lemma}\label{lem:lem1}
\begin{equation} \label{eq:app1}
	\nabla_{z_i} h(z) = \left\{ \begin{array}{ll}
	\frac{\kappa'(\NM{z}{2}) - \kappa_0}{\NM{z}{2}} z_i & \text{if}\; z \neq 0\\
	0 & \text{otherwise}
	\end{array} \right..
	\end{equation}
\end{lemma}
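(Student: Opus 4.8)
The plan is to establish the gradient formula by splitting into the two cases $z \neq 0$ and $z = 0$, since the only source of difficulty is that the $\ell_2$-norm $\NM{z}{2}$ fails to be differentiable at the origin. Away from the origin the computation is a direct chain rule, while at the origin I must verify differentiability from the definition and exploit the cancellation $\kappa(t) - \kappa_0 t = o(t)$ as $t \to 0^+$.

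For $z \neq 0$, I would write $h(z) = \phi(\NM{z}{2})$ where $\phi(t) \equiv \kappa(t) - \kappa_0 t$ is smooth, and note that $\NM{z}{2}$ is differentiable in this region. Applying Lemma~\ref{lem:gradnorm}, $\nabla_{z_i} \NM{z}{2} = z_i/\NM{z}{2}$, the chain rule then gives $\nabla_{z_i} h(z) = \phi'(\NM{z}{2})\, z_i/\NM{z}{2} = (\kappa'(\NM{z}{2}) - \kappa_0)\, z_i/\NM{z}{2}$, which is precisely the first branch of (\ref{eq:app1}). This part is routine.

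The main obstacle is the case $z = 0$, where $\NM{z}{2}$ is nonsmooth and the chain rule does not apply. Here I would argue directly from the definition of the (Fréchet) gradient. Since $\kappa(0) = 0$ we have $h(0) = 0$, and writing $t = \NM{z}{2}$ the candidate gradient $0$ is verified through
\[
\frac{h(z) - h(0) - \la 0, z \ra}{\NM{z}{2}} = \frac{\kappa(t) - \kappa_0 t}{t} = \frac{\kappa(t) - \kappa(0)}{t} - \kappa_0 \longrightarrow \kappa'(0) - \kappa_0 = 0,
\]
where the limit is taken as $t \to 0^+$ and uses $\kappa_0 = \kappa'(0)$ together with the differentiability of $\kappa$ at $0$. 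This shows $h(z) = o(\NM{z}{2})$ near the origin, hence $h$ is differentiable at $0$ with $\nabla h(0) = 0$, so each $\nabla_{z_i} h(0) = 0$, matching the second branch of (\ref{eq:app1}).

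I would finish by remarking that, because $\kappa$ is defined on $[0,\infty)$, the quantity $\kappa'(0)$ is a right-hand derivative; this causes no trouble since $t = \NM{z}{2} \ge 0$, so only $t \to 0^+$ ever occurs. The conceptual point is that subtracting $\kappa_0 \NM{z}{2}$ exactly removes the linear part of $\kappa(\NM{z}{2})$ at the origin, absorbing the kink of the norm and leaving a function that is differentiable there with vanishing gradient.
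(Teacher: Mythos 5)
Your proof is correct, and for the nontrivial case $z=0$ it takes a genuinely different route from the paper. For $z \neq 0$ both arguments are the same chain-rule computation via Lemma~\ref{lem:gradnorm}. At the origin, however, the paper never examines the difference quotient: it takes the candidate gradient expression $\bar{h}_i(z) = \frac{\kappa'(\NM{z}{2}) - \kappa_0}{\NM{z}{2}} z_i$ and shows that $\bar{h}_i(\alpha\Delta) \to 0$ as $\alpha \to 0^+$ along every unit direction $\Delta$ (using $\kappa'(\alpha) \to \kappa_0$), then concludes that $h$ is smooth at $0$ with vanishing gradient. That conclusion implicitly relies on the standard fact that a function which is differentiable on a punctured neighborhood, continuous at the puncture, and whose gradient has a limit there, is differentiable at that point with the limiting gradient (a mean-value-theorem argument); as written, radial limits of the candidate gradient alone do not by themselves constitute a proof of differentiability. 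Your argument instead verifies Fr\'echet differentiability at $0$ directly from the definition, using $h(z) = \kappa(t) - \kappa_0 t = o(t)$ with $t = \NM{z}{2}$, which follows from $\kappa(0)=0$, $\kappa_0 = \kappa'(0)$, and differentiability of $\kappa$ at $0$; your remark that only $t \to 0^+$ can occur correctly handles the fact that $\kappa'(0)$ is a one-sided derivative. This is more elementary and self-contained, and it closes the small logical gap in the paper's version. What the paper's computation buys in exchange is continuity of $\nabla h$ at the origin (the ingredient relevant to the subsequent Lipschitz-smoothness claims), which your proof does not state explicitly but which follows immediately from the same estimate $|\kappa'(t)-\kappa_0| \to 0$ once $\nabla h(0)=0$ is established.
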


\begin{proof}
For $z \neq 0$,
$\NM{z}{2}$ is differentiable (Lemma~\ref{lem:gradnorm}), 
and we obtain the first part of (\ref{eq:app1}).
For $z =0$,
let
$\bar{h}_i(z) = \frac{\kappa'(\NM{z}{2}) - \kappa_0}{\NM{z}{2}} z_i$.
Consider any $\Delta$ with $\NM{\Delta}{2} = 1$.
\begin{align*}
\lim_{\alpha \rightarrow 0^+}{\bar{h}_i(0 + \alpha \Delta)}
& = \lim_{\alpha \rightarrow 0^+} \frac{\kappa'(\NM{\alpha \Delta}{2}) - \kappa_0}
{\NM{\alpha \Delta}{2}} \alpha \Delta_i,
\\
& = \lim_{\alpha \rightarrow 0^+} (\kappa'(\alpha) - \kappa_0)\Delta_i = 0,
\end{align*}
as $\lim_{\alpha \rightarrow 0^+}\kappa'(\alpha) - \kappa_0 = 0$.
Thus, $h(z)$ is smooth at $z = 0$, and we obtain the second part of \eqref{eq:app1}.
\end{proof}





\begin{lemma} \citep{eriksson2013applied} \label{pr:Lipschitz2}
Let $f : \R \rightarrow \R$ be a differentiable function.
(i)
If its derivative $f'$ is bounded,
then $f$ is Lipschitz-continuous with constant equal to the maximum value of $|f'|$.
\end{lemma}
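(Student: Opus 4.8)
The plan is to prove part (i) of Lemma~\ref{pr:Lipschitz2} by the Mean Value Theorem, and then to verify that the constant it produces is not merely \emph{a} Lipschitz constant but the \emph{smallest} one, so that it genuinely equals the maximum value of $|f'|$. Write $M \equiv \sup_{t \in \R} |f'(t)|$, which is finite by the boundedness hypothesis; this is the quantity the statement calls ``the maximum value of $|f'|$''.

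First I would establish the upper bound. Fix any $a, b \in \R$ with $a < b$. Since $f$ is differentiable on $\R$, it is in particular continuous on $[a,b]$ and differentiable on $(a,b)$, so the Mean Value Theorem applies: there exists $\xi \in (a,b)$ with $f(b) - f(a) = f'(\xi)(b - a)$. Taking absolute values gives
\[ |f(b) - f(a)| = |f'(\xi)|\,|b - a| \le M\,|b - a|. \]
As $a$ and $b$ were arbitrary, $f$ is Lipschitz-continuous with constant $M$. Equivalently, one may integrate, $f(b) - f(a) = \int_a^b f'(t)\,dt$, and bound the integrand by $M$, which avoids singling out one point $\xi$.

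It then remains to argue that $M$ is the best constant, i.e. that no smaller Lipschitz constant works; this is the only part requiring a second step rather than a one-line appeal to the MVT. Let $L^\star = \sup_{a \neq b} |f(b) - f(a)| / |b - a|$ denote the optimal Lipschitz constant of $f$. The bound above shows $L^\star \le M$. For the reverse inequality, fix any $t \in \R$ and let $b \to t$: by the definition of the derivative, $|f(b) - f(t)| / |b - t| \to |f'(t)|$, so $L^\star \ge |f'(t)|$, and taking the supremum over $t$ yields $L^\star \ge M$. Hence $L^\star = M$, as claimed.

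The argument is elementary and I anticipate no real obstacle, since the MVT does all the work for the upper bound. The only point needing a little care is the word ``maximum'': if $|f'|$ does not attain its supremum, the statement should be read with $M$ taken as the supremum $\sup_t |f'(t)|$, and every step above uses only this supremum, so nothing changes (attainment would follow as a separate matter, e.g. when $f'$ is continuous with controlled behavior at infinity).
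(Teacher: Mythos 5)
Your proof is correct, but note that the paper does not prove this lemma at all: it is imported by citation from \citep{eriksson2013applied} and used as a black box (in the proof of Lemma~\ref{lem:bar-h}, to pass from the bound $|\hat{h}_i'(\gamma)| \le 2\rho$ to $2\rho$-Lipschitz continuity of $\hat{h}_i$ on each interval), so there is no internal proof to compare against. Your Mean Value Theorem argument is the standard one and delivers exactly what the paper needs; in fact the paper only ever uses the upper-bound direction (a bound on $|f'|$ yields a Lipschitz constant), so your second step --- showing via difference quotients that $M = \sup_t |f'(t)|$ is the \emph{optimal} Lipschitz constant --- is extra precision that the word ``equal'' in the statement technically demands but that no application in the paper relies on. One small caution: your parenthetical alternative, $f(b) - f(a) = \int_a^b f'(t)\,dt$, is not automatic for a merely differentiable $f$ --- a bounded derivative need not be Riemann integrable (Volterra's function), and justifying the identity with the Lebesgue integral requires absolute continuity, which is essentially the Lipschitz property being proved --- so the MVT argument should remain the primary one; it is airtight on its own.
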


\begin{lemma} \citep{eriksson2013applied} \label{pr:Lipschitz3}
If a continuous function $f:\R \rightarrow \R$ is $L_1$-Lipschitz continuous in $[a, b]$ and $L_2$-Lipschitz continuous in $[b, c]$ 
(where $- \infty \le a < b < c \le \infty$),
then it is $\max(L_1, L_2)$-Lipschitz continuous in $[a, c]$.
\end{lemma}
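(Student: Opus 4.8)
The plan is to prove the claim by a direct case analysis on where the two points lie relative to the junction point $b$, using the triangle inequality to handle the straddling case. Write $L = \max(L_1, L_2)$; the goal is to show $|f(x) - f(y)| \le L |x - y|$ for every pair $x, y \in [a, c]$. By symmetry in $x$ and $y$, I may assume without loss of generality that $x \le y$.

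First I would dispose of the two easy cases where both points lie in a single subinterval. If $x, y \in [a, b]$, then by the $L_1$-Lipschitz hypothesis $|f(x) - f(y)| \le L_1 |x - y| \le L |x - y|$; symmetrically, if $x, y \in [b, c]$, the $L_2$-Lipschitz hypothesis gives $|f(x) - f(y)| \le L_2 |x - y| \le L |x - y|$. Both bounds follow since $L_1, L_2 \le L$.

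The only remaining case is the straddling one, where $x \le b \le y$ with $x \in [a, b]$ and $y \in [b, c]$. Here I insert the junction point $b$ and apply the triangle inequality:
\begin{align*}
|f(x) - f(y)|
& \le |f(x) - f(b)| + |f(b) - f(y)| \\
& \le L_1 |x - b| + L_2 |b - y| \\
& \le L \left( |x - b| + |b - y| \right) = L |x - y|,
\end{align*}
where the final equality holds precisely because $b$ lies between $x$ and $y$, so $(b - x) + (y - b) = y - x = |x - y|$. This exhausts all configurations of $x, y$ in $[a, c]$ and establishes the claim.

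I do not anticipate any real obstacle: the content is the observation that when the two points straddle $b$, the distances to $b$ add up exactly to the total distance, so splitting the increment at $b$ loses nothing. The only point worth a word of care is that $f$ is assumed continuous on all of $[a, c]$ (so $f(b)$ is well defined and finite and the value agrees from both sides), and that the stated bounds remain valid when $a = -\infty$ or $c = +\infty$, since the argument only ever references the finite points $x, y, b$.
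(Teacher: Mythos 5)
Your proof is correct and complete: the case split on whether $x,y$ lie in the same subinterval, together with the triangle inequality through the junction point $b$ (where the distances add exactly), is precisely the standard argument this lemma rests on. The paper itself does not prove this statement --- it cites it from \citep{eriksson2013applied} and uses it as a black box in the proof of Proposition~\ref{prop:smooth} --- so there is no alternative route in the paper to compare against; your observation that continuity at $b$ (indeed, well-definedness of $f(b)$, which already follows from the Lipschitz hypotheses on the two closed intervals) and the finiteness of the referenced points $x, y, b$ are all that matter even when $a = -\infty$ or $c = \infty$ is exactly the right level of care.
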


\begin{lemma} \label{lem:bar-h}
Let $z$ be an arbitrary vector, and $e_i$ be the unit vector with only its $i$th dimension equal to $1$.
Define $\hat{h}_i(\gamma) = \frac{\kappa'(\NM{z + e_i \gamma}{2}) - \kappa_0}{\NM{z + e_i \gamma}{2}} (z_i + \gamma)$.
Then, $\hat{h}$ is $2\rho$-Lipschitz continuous.
\end{lemma}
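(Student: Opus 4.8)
The plan is to recognize $\hat h_i(\gamma)$ as the $i$th partial derivative of $h(w)=\kappa(\NM{w}{2})-\kappa_0\NM{w}{2}$ from Lemma~\ref{lem:lem1}, restricted to the coordinate line $w = z + e_i\gamma$, and to reduce the statement to a one-dimensional Lipschitz estimate. Writing $s = z_i + \gamma$ and $c = \NM{z}{2}^2 - z_i^2 = \sum_{j\neq i} z_j^2 \ge 0$ (a constant independent of $\gamma$), we have $\NM{z+e_i\gamma}{2} = \sqrt{c+s^2}$, so $\hat h_i(\gamma) = \phi(s)$ where
\[
\phi(s) \equiv \frac{\kappa'(\sqrt{c+s^2})-\kappa_0}{\sqrt{c+s^2}}\, s .
\]
Since $\gamma\mapsto s=z_i+\gamma$ is a translation, it preserves Lipschitz constants, so it suffices to show $\phi$ is $2\rho$-Lipschitz. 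I would split into the cases $c=0$ and $c>0$ to isolate the only possible singularity, which occurs at $s=0$ and only when $c=0$.

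For $c=0$ we have $\sqrt{c+s^2}=|s|$ and $\phi(s) = \sign{s}\,(\kappa'(|s|)-\kappa_0)$, with $\phi(0)=0$ by the convention of Lemma~\ref{lem:lem1}. Because $\kappa'(0)=\kappa_0$, $\phi$ is continuous at $0$; on each half-line it equals $\pm(\kappa'(\pm s)-\kappa_0)$, which is $\rho$-Lipschitz since $\kappa$ is $\rho$-Lipschitz smooth. Gluing the two half-lines through $s=0$ via Lemma~\ref{pr:Lipschitz3} shows $\phi$ is $\rho$-Lipschitz, hence $2\rho$-Lipschitz.

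For $c>0$ there is no singularity, as $r \equiv \sqrt{c+s^2} \ge \sqrt c > 0$ for all $s$. Wherever $\kappa''$ exists, a direct differentiation using $dr/ds = s/r$ and $r^2-s^2 = c$ gives
\[
\phi'(s) = \kappa''(r)\,\frac{s^2}{r^2} + (\kappa'(r)-\kappa_0)\,\frac{c}{r^3}.
\]
I would then bound the two terms separately. The first obeys $|\kappa''(r)|\,s^2/r^2 \le \rho$ since $|\kappa''|\le\rho$ and $s^2 \le r^2$. For the second, the key observation is that $\kappa_0 = \kappa'(0)$, so $\rho$-Lipschitzness of $\kappa'$ yields $|\kappa'(r)-\kappa_0| = |\kappa'(r)-\kappa'(0)| \le \rho r$; combined with $c \le r^2$ this gives $|\kappa'(r)-\kappa_0|\,c/r^3 \le \rho\, c/r^2 \le \rho$. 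Hence $|\phi'(s)|\le 2\rho$ off the finitely many points where $\kappa'$ is non-differentiable, and Lemma~\ref{pr:Lipschitz2} gives $2\rho$-Lipschitzness on each interval between such points; stitching these across the finitely many kinks with Lemma~\ref{pr:Lipschitz3}, using continuity of $\phi$, yields that $\phi$ is $2\rho$-Lipschitz globally.

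The main obstacle is the derivative estimate in the $c>0$ case: the clean constant $2\rho$ hinges on the exact cancellation arranged by writing $r^2-s^2=c$ and on exploiting $\kappa_0=\kappa'(0)$ to control $\kappa'(r)-\kappa_0$ by $\rho r$, which is precisely what kills the $r^3$ in the denominator. The remaining subtleties, namely the coordinate singularity at the origin and the finitely many points where $\kappa''$ fails to exist, are purely bookkeeping, dispatched by the case split and by the stitching Lemma~\ref{pr:Lipschitz3} once continuity of $\phi$ is noted.
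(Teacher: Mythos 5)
Your proof is correct and follows essentially the same route as the paper's: differentiate $\hat{h}_i$ on the finitely many intervals where $\kappa''$ exists, bound the derivative by $2\rho$ via exactly the two estimates the paper uses ($|\kappa''(r)|\,s^2/r^2\le\rho$ from $s^2\le r^2$, and $|\kappa'(r)-\kappa_0|=|\kappa'(r)-\kappa'(0)|\le\rho r$ combined with $c\le r^2$, your second term $(\kappa'(r)-\kappa_0)\,c/r^3$ being identical to the paper's $\bigl(1-s^2/r^2\bigr)\frac{\kappa'(r)-\kappa_0}{r}$ since $1-s^2/r^2=c/r^2$), and then stitch the pieces together with Lemmas~\ref{pr:Lipschitz2} and~\ref{pr:Lipschitz3}. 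If anything, your write-up is slightly more careful than the paper's: your derivative formula is the correct one (the paper's display has $\kappa''(\NM{w}{2})/\NM{w}{2}$ where it should be $\kappa''(\NM{w}{2})/\NM{w}{2}^2$, although its subsequent bound proceeds as if the correct formula were written), and your explicit $c=0$ case handles the singularity at $z+e_i\gamma=0$, a point the paper's interval argument passes over silently.
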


\begin{proof}
Since $\kappa'$ is non-differentiable only at finite points,
and let them be $\{ \hat{\alpha}_1, \dots, \hat{\alpha}_k \}$ where $\hat{\alpha}_1 < \dots < \hat{\alpha}_k$.
We partition $(-\infty, \infty)$ into intervals $(-\infty, \hat{\alpha}_1] \cup [\hat{\alpha}_1, \hat{\alpha}_2] \cup \cdots \cup [\hat{\alpha}_k, \infty)$,
such that 
$\kappa''$ exists
in each interval.
Let $w = z + e_i \gamma$.
For any interval,
\begin{align}
\hat{h}_i'(\gamma)
= 
\frac{\kappa''(\NM{w}{2})}{\NM{w}{2}} (z_i + \gamma)^2
+ \left(1 - \frac{(z_i + \gamma)^2}{\NM{w}{2}^2} \right)   
\frac{\kappa'(\NM{w}{2}) - \kappa_0}{\NM{w}{2}}.
\label{eq:temp9}
\end{align}

Let $\phi(\alpha) = \kappa'(\alpha) - \kappa_0$, where $\alpha \ge 0$.
Note that 
$\phi(0) = 0$. Moreover,
$\phi(\alpha)$
is $\rho$-Lipschitz continuous as $\kappa$ is $\rho$-Lipschitz smooth. Thus,
\begin{align*}
|\phi(\alpha) - \phi(0)|
= |\kappa'(\alpha) - \kappa_0| \le \rho \alpha,
\end{align*}
and so
\begin{align}
\left| \kappa'(\NM{w}{2}) - \kappa_0 \right|  \le \rho \NM{w}{2}.
\label{eq:temp18}
\end{align}
Note that $(z_i + \gamma)^2 \le \NM{w}{2}^2$, \eqref{eq:temp9} can be written as
\begin{align*}
\left| \hat{h}_i'(\gamma) \right| 
& \le 
\left| \frac{\kappa''(\NM{w}{2})}{\NM{w}{2}} (z_i + \gamma)^2 \right| 
+ \left| \left(1 - \frac{(z_i + \gamma)^2}{\NM{w}{2}^2} \right)   
\frac{\kappa'(\NM{w}{2}) - \kappa_0}{\NM{w}{2}} \right| 
\\
& \le 
\left|\kappa''(\NM{w}{2})\right| 
+ \left| \frac{\kappa'(\NM{w}{2}) - \kappa_0}{\NM{w}{2}} \right| 
\le 2 \rho,
\end{align*}
where the last inequality is due to that $\kappa$ is $\rho$-Lipschitz smooth and \eqref{eq:temp18}.
Thus, $|\hat{h}_i'(\gamma)| \le 2 \rho$,
and by Lemma~\ref{pr:Lipschitz2}, we have $\hat{h}_i(\gamma)$ is $2\rho$-Lipschitz continuous on any
interval.
Obviously $\hat{h}_i$ is continuous,
and we conclude that $\hat{h}_i$ is also $2\rho$-Lipschitz continuous by Lemma~\ref{pr:Lipschitz3}. 
\end{proof}

From Lemma~\ref{lem:bar-h},
$\hat{h}_i$ is 
$2\rho$-Lipschitz continuous.
Thus, $\nabla h$ is 
$2\rho$-Lipschitz continuous 
in each of its dimensions. For any 
$x, y\in \R^d$,
\begin{eqnarray*}
	\NM{\nabla h(x) - \nabla h(y)}{2}^2
	& = & \sum_{i = 1}^d \left[  \nabla_{x_i} h(x) - \nabla_{y_i} h(y) \right]^2 \\
	& \le & 4\rho^2 \sum_{i = 1}^d (x_{i} - y_{i})^2
	= 4\rho^2 \NM{x- y}{2}^2,
\end{eqnarray*}
and hence
$h$ is $2\rho$-Lipschitz smooth.

Finally, we will show that $h(z)$ is also concave.
\begin{lemma}\citep{boyd2004convex}
	\label{lem:concave}
	$\phi(x) = \pi(q(x))$ is concave
	if $\pi$ is concave, non-increasing and $q$ is convex.
\end{lemma}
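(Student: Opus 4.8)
The plan is to verify concavity of $\phi = \pi \circ q$ directly from the two-point definition, invoking the three hypotheses in turn rather than appealing to any smoothness. Fix arbitrary points $x, y$ in the domain of $q$ and a scalar $\lambda \in [0, 1]$; the goal is to establish
\[ \phi(\lambda x + (1 - \lambda) y) \ge \lambda \phi(x) + (1 - \lambda) \phi(y). \]

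First I would apply convexity of $q$ to the inner argument, which gives $q(\lambda x + (1 - \lambda) y) \le \lambda q(x) + (1 - \lambda) q(y)$. The crucial step is to push $\pi$ through this inequality: because $\pi$ is non-increasing, the inequality \emph{reverses}, yielding $\pi(q(\lambda x + (1 - \lambda) y)) \ge \pi(\lambda q(x) + (1 - \lambda) q(y))$. Concavity of $\pi$ then bounds the right-hand side from below by $\lambda \pi(q(x)) + (1 - \lambda) \pi(q(y))$. Chaining these two steps and recalling $\phi = \pi \circ q$ produces exactly the desired concavity inequality.

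I expect the only delicate point to be the orientation of the inequality at the middle step: it is precisely the \emph{non-increasing} hypothesis on $\pi$ that converts the convex-combination bound for $q$ into the direction needed for concavity of the composition. Had $\pi$ instead been non-decreasing, the inequality would be preserved and the same argument would yield convexity rather than concavity. No further obstacle is anticipated, as the remainder is a routine chaining of scalar inequalities requiring no differentiability of $\pi$ or $q$. In the present context this lemma will be applied with $\pi(t) = \kappa(t) - \kappa_0 t$, which is concave and, since $\kappa' \le \kappa'(0) = \kappa_0$ on $[0, \infty)$, non-increasing, together with $q(z) = \NM{z}{2}$, thereby certifying that $h(z) = \pi(q(z))$ is concave.
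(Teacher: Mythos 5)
Your proof is correct. The paper itself offers no proof of this lemma---it is quoted as a known composition rule from \citep{boyd2004convex}---and your two-point argument is precisely the standard derivation of that rule: convexity of $q$ gives $q(\lambda x + (1-\lambda)y) \le \lambda q(x) + (1-\lambda) q(y)$, the non-increasing property of $\pi$ reverses this inequality, and concavity of $\pi$ finishes the chain; you also correctly flag that the monotonicity hypothesis is exactly what fixes the inequality's direction. One minor point worth being aware of: the middle step implicitly requires both $q(\lambda x + (1-\lambda)y)$ and $\lambda q(x) + (1-\lambda)q(y)$ to lie in the domain of $\pi$, which is automatic in the paper's application since $\pi(\alpha) = \kappa(\alpha) - \kappa_0\alpha$ is defined on $[0,\infty)$ and $q(z) = \NM{z}{2} \ge 0$; your closing paragraph matches exactly how the paper invokes the lemma in proving Proposition~\ref{prop:smooth}.
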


Let $\pi(\alpha) = \kappa(\alpha) - \kappa_0 \alpha$, where $\alpha \ge 0$.
Note that $\pi$ is concave.
Moreover, $\pi(0) = 0$ and $\pi'(\alpha) \le 0$.
Thus, $\pi(\alpha)$ is non-increasing on $\alpha \ge 0$.
Next, let $q(z) = \NM{z}{2}$.
Then,
$h(z) \equiv \kappa(\NM{z}{2}) - \kappa_0 \NM{z}{2} = \pi(q(z))$.
As $q$ is convex,
$h(z)$ is concave
from Lemma~\ref{lem:concave}.
\end{proof}


\subsection{Corollary~\ref{cor:smooth}}

\begin{proof}
From Proposition~\ref{prop:smooth}
and definition of $\bar{g}_i$,
we can see it is concave.
Then,
for any $x, y$, 
\begin{align*}
\NM{\nabla h(A_i x) - \nabla h(A_i y)}{2}^2
\le 4 \rho^2 \NM{A_i x - A_i y}{2}^2
\le 4 \rho^2 \NM{A_i}{F}^2 \NM{x - y}{2}^2.
\end{align*}
Thus, $\bar{g}_i$ is $2\rho\NM{A_i}{F}$-Lipschitz smooth.
\end{proof}


\subsection{Corollary~\ref{cor:c1}}

\begin{proof}
It is easy to see that $\breve{g}(x) = \kappa_0 \sum_{i = 1}^K \mu_i \NM{A_i x}{2}$ is convex but not smooth.
Using Corollary~\ref{cor:smooth},
as each $\bar{g}_i$ is concave and Lipschitz-smooth,
$\bar{g}$ is also concave and Lipschitz-smooth.
\end{proof}


\subsection{Proposition~\ref{prop:c3}}

\begin{proof}
First,
we introduce a few lemmas.

\begin{definition} \citep{bertsekas1999nonlinear}
A function $f:\R^m \rightarrow \R$ 
is {\em absolute symmetric}
if $f\left( \left[ x_1; \dots; x_m \right]\right) = f\left(\left[ |x_{\pi(1)}|; \dots; |x_{\pi(m)}| \right]\right)$
for any permutation $\pi$.
\end{definition}

\begin{lemma}\citep{lewis2005nonsmooth} \label{lem:svscvx}
Let $\sigma(X) = [\sigma_1(X); \dots; \sigma_m(X)]$ be the vector containing singular values of $X$.  For an absolute symmetric function $f: \R^m \rightarrow \R$,
	$\phi(X) \equiv f(\sigma(X))$ is concave on $X$
	if and only if $f$ is concave.
\end{lemma}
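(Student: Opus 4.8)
The statement is an instance of Lewis's characterization of absolutely symmetric functions of singular values, so the plan is to give a self-contained argument for both implications, treating $f$ as a closed proper function (which is automatic here, since a finite-valued convex or concave function on all of $\R^m$ is continuous and closed). Throughout I write $\sigma(X)$ for the vector of singular values sorted in decreasing order, and recall the paper's convention $m \le n$. The easy direction, that concavity of $\phi$ forces concavity of $f$, I would get by restriction to (rectangular) diagonal matrices. For $x \in \R^m$ set $X = [\,\Diag{x}\;\; 0\,] \in \R^{m \times n}$; the appended zero columns contribute no nonzero singular values, so $\sigma(X)$ is $(|x_1|,\dots,|x_m|)$ sorted decreasingly. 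Absolute symmetry of $f$ then gives $\phi([\,\Diag{x}\;\;0\,]) = f(\sigma(X)) = f(x)$. Since $x \mapsto [\,\Diag{x}\;\;0\,]$ is linear, $f$ is the restriction of the concave $\phi$ to a linear subspace, hence concave.

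For the hard direction (concave $f$ implies concave $\phi$), the main tool is von Neumann's trace inequality $\la X, Y\ra \le \la \sigma(X), \sigma(Y)\ra$, with equality attainable when $X$ and $Y$ share a simultaneous singular value decomposition, together with the conjugate identity $(f\circ\sigma)^{*} = f^{*}\circ\sigma$ for absolute symmetric $f$, where ${}^{*}$ is the Fenchel conjugate. I would establish the identity directly:
\begin{align*}
(f\circ\sigma)^{*}(Y)
= \sup_{X}\bigl\{ \la X, Y\ra - f(\sigma(X)) \bigr\}
= \sup_{s}\bigl\{ \la s, \sigma(Y)\ra - f(s) \bigr\}
= f^{*}(\sigma(Y)).
\end{align*}
The middle equality is the crux: for $X$ with prescribed singular values $\sigma(X)=s$, von Neumann's inequality shows $\la X, Y\ra$ is maximized at $\la s,\sigma(Y)\ra$ by aligning the singular vectors of $X$ with those of $Y$, which reduces the supremum to one over sorted nonnegative $s$; the rearrangement inequality together with absolute symmetry of $f$ then lets one drop the sorting/nonnegativity constraint and take the supremum over all $s\in\R^m$, giving $f^{*}(\sigma(Y))$.

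With the identity in hand, the conclusion is a biconjugation argument. One first checks that $f^{*}$ is again absolute symmetric, since signed permutations are orthogonal and preserve the inner product in the definition of the conjugate. For a closed proper convex function $f$ we have $f = f^{**}$, so applying the identity to $f^{*}$ yields $f\circ\sigma = f^{**}\circ\sigma = (f^{*}\circ\sigma)^{*}$, which is a Fenchel conjugate and therefore convex. To obtain the concave case of the lemma, I apply this convex statement to the convex, absolute symmetric function $-f$: then $(-f)\circ\sigma = -(f\circ\sigma)$ is convex, i.e.\ $\phi = f\circ\sigma$ is concave, as required.

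The main obstacle is the conjugate identity, and within it the middle equality above: one must argue carefully that replacing the matrix supremum by a vector supremum is lossless (via the equality case of von Neumann's inequality) and that the resulting constrained vector problem has the same value as the unconstrained one (via rearrangement and absolute symmetry). The remaining steps — the diagonal restriction, the symmetry of $f^{*}$, and the negation trick converting the convex result to the concave one — are routine once this identity is in place.
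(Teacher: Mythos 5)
Your proof is correct. Note, however, that the paper does not prove this lemma at all: it is imported verbatim by citation from \citet{lewis2005nonsmooth}, so there is no in-paper argument to compare against. What you have written is a faithful reconstruction of the classical proof in that literature (von Neumann's trace inequality giving the conjugacy formula $(f\circ\sigma)^{*}=f^{*}\circ\sigma$, biconjugation of a closed proper convex function, the diagonal-embedding restriction for the converse, and negation to pass from the convex to the concave statement), and all the delicate points are handled: the equality case of von Neumann's inequality via a simultaneous SVD, the rearrangement step $\la s,\sigma(Y)\ra \le \la \bar{s},\sigma(Y)\ra$ with $g(s)=g(\bar{s})$ to remove the sorting and sign constraints, the absolute symmetry of $f^{*}$ via signed permutation matrices, and the validity of $f=f^{**}$ because a finite-valued convex function on $\R^m$ is automatically closed and proper. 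The only point worth a remark is that when you apply the conjugacy identity to $f^{*}$, that function may be extended-real-valued, so you should observe that the von Neumann argument never used finiteness of the function being conjugated; this is routine and does not affect correctness.
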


From the definition of
$\bar{g}$ in (\ref{eq:barg}), 
\begin{align*}
\bar{g}(X)
= \bar{\mu}  \sum_{i = 1}^m
( \kappa(\sigma_i(X)) - \kappa_0 \NM{X}{*} )
= \bar{\mu} \sum_{i = 1}^m
\left( \kappa(\sigma_i(X)) - \kappa_0 \sigma_i(X) \right).
\end{align*}
Let 
\begin{equation} \label{eq:h}
h(x) = \bar{\mu} \sum_{i = 1}^m (\kappa(|x_i|) - \kappa_0 |x_i|).
\end{equation}
Obviously, 
$h$ is
absolute symmetric.
From Remark~\ref{remark:c2},
$h$ is concave. Thus, $\bar{g}$ is also concave by Lemma~\ref{lem:svscvx}.


\begin{lemma}\citep{lewis2005nonsmooth} \label{lem:gradspec}
Let the SVD of $X$ 
be $U \Diag{\sigma(X)} V^{\top}$,
where $\sigma(X) = \left[ \sigma_1(X); \dots; \sigma_m(X) \right]$,
$f:\R^m \rightarrow \R$ be smooth and absolute symmetric, and $\phi(X) \equiv f(\sigma(X))$. We have 
\begin{enumerate}
\item $\nabla \phi(X) = U \Diag{ \nabla f(\sigma(X)) } V^{\top}$; and
\item If $f$ is $L$-Lipschitz smooth, then $\phi$ is also $L$-Lipschitz smooth.
\end{enumerate}
\end{lemma}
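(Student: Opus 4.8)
The plan is to prove the two assertions in order: first the gradient formula in (1), which carries all the real content, and then the Lipschitz-smoothness claim in (2), which will follow almost for free from (1) together with the already-established Lemma~\ref{lem:svscvx}. Throughout I use the invariance of the singular-value map under orthogonal transformations.

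For (1), the starting point is orthogonal invariance: since $\sigma(PXQ^{\top}) = \sigma(X)$ for any orthogonal $P,Q$, we have $\phi(PXQ^{\top}) = \phi(X)$. Differentiating this identity via the chain rule (the adjoint of the linear map $X \mapsto PXQ^{\top}$ is $Y \mapsto P^{\top}YQ$) yields the equivariance $\nabla\phi(PXQ^{\top}) = P\,\nabla\phi(X)\,Q^{\top}$. Writing the (thin) SVD $X = U\,\Diag{\sigma(X)}\,V^{\top}$ and taking $P = U^{\top}$, $Q = V^{\top}$, this gives $\nabla\phi(X) = U\,\nabla\phi(\Sigma)\,V^{\top}$ where $\Sigma = \Diag{\sigma(X)}$. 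Thus it suffices to compute the gradient at the diagonal matrix $\Sigma$ and conjugate back; the target is to show $\nabla\phi(\Sigma) = \Diag{\nabla f(\sigma(X))}$.

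To compute $\nabla\phi(\Sigma)$, I would first treat the generic case in which the entries of $\sigma(X)$ are distinct and positive, and evaluate directional derivatives $\langle \nabla\phi(\Sigma), H\rangle = \frac{d}{dt} f(\sigma(\Sigma + tH))\big|_{t=0}$. Classical first-order singular-value perturbation theory gives $\frac{d}{dt}\sigma_i(\Sigma + tH)\big|_{t=0} = H_{ii}$, because the $i$-th singular triple of $\Sigma$ is $(e_i,\sigma_i,e_i)$, so only the diagonal of $H$ enters to first order. The chain rule then yields $\langle \nabla\phi(\Sigma), H\rangle = \sum_i \partial_i f(\sigma)\,H_{ii} = \langle \Diag{\nabla f(\sigma)}, H\rangle$, hence $\nabla\phi(\Sigma) = \Diag{\nabla f(\sigma)}$. \textbf{The hard part is the non-generic case} of repeated or zero singular values, where the singular vectors are not unique and the individual $\sigma_i(\cdot)$ fail to be differentiable; this is the genuinely technical heart of the Lewis--Sendov result. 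Here absolute symmetry of $f$ is essential: sign-flip invariance forces $f$ to be even in each coordinate, so $\partial_i f$ vanishes whenever $\sigma_i = 0$ and the formula is insensitive to the sign ambiguity of singular vectors at zero singular values; and permutation invariance makes $\nabla f$ constant across equal coordinates, which is exactly what is needed for the off-diagonal contribution of $H$ to cancel after symmetrization, leaving $\phi$ differentiable with the stated gradient. I would finish by a density/continuity argument: the formula holds on the dense set of matrices with distinct positive singular values, the map $X \mapsto U\,\Diag{\nabla f(\sigma(X))}\,V^{\top}$ is well defined (thanks to absolute symmetry) and continuous, so the identity extends to all $X$ by continuity.

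For (2) I would avoid any further spectral perturbation and instead use the characterization that a differentiable $\phi$ has $L$-Lipschitz gradient if and only if both $\frac{L}{2}\NM{\cdot}{F}^2 + \phi$ and $\frac{L}{2}\NM{\cdot}{F}^2 - \phi$ are convex. Define $g_{\pm}(z) = \frac{L}{2}\NM{z}{2}^2 \pm f(z)$ on $\R^m$; since $f$ is $L$-Lipschitz smooth and absolutely symmetric, each $g_{\pm}$ is convex and absolutely symmetric. Because $\frac{L}{2}\NM{X}{F}^2 = \frac{L}{2}\NM{\sigma(X)}{2}^2$, we have $g_{\pm}(\sigma(X)) = \frac{L}{2}\NM{X}{F}^2 \pm \phi(X)$, and Lemma~\ref{lem:svscvx} (in its convex form, obtained by applying the stated concave version to $-g_{\pm}$) shows that each of these is convex in $X$. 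The characterization then gives that $\nabla\phi$ is $L$-Lipschitz, which is the desired conclusion.
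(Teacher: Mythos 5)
You should first be aware that the paper never proves this lemma: it is imported verbatim, with citation, from \citep{lewis2005nonsmooth}, and is used as a black box inside the proof of Proposition~\ref{prop:c3}. So there is no in-paper argument to compare against, and your proposal must stand on its own.

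On its own terms, your architecture is the right one, and part (2) is genuinely nice. The equivariance $\nabla\phi(PXQ^{\top})=P\,\nabla\phi(X)\,Q^{\top}$ is correct (and correctly reduces everything to diagonal matrices), and the generic-case computation is sound: at a $\Sigma$ with distinct positive singular values, each $\sigma_i$ is differentiable with directional derivative $H_{ii}$, giving $\nabla\phi(\Sigma)=\Diag{\nabla f(\sigma(X))}$. Your part (2) — convexity of $\frac{L}{2}\NM{z}{2}^2\pm f(z)$, transfer to $\frac{L}{2}\NM{X}{F}^2\pm\phi(X)$ via Lemma~\ref{lem:svscvx}, then the characterization of $L$-Lipschitz smoothness as simultaneous semiconvexity and semiconcavity — is correct given part (1), and elegantly avoids any further perturbation theory. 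One caveat: the direction of that characterization you actually need (both $\frac{L}{2}\NM{\cdot}{2}^2\pm\phi$ convex $\Rightarrow$ $\nabla\phi$ is $L$-Lipschitz) is itself nontrivial; it requires co-coercivity of gradients of convex smooth functions, not just gradient monotonicity, so it deserves a citation or a short proof rather than being invoked as folklore.

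The genuine gap is the non-generic case of part (1), which you rightly flag as the heart of the theorem but then dispatch with a density/continuity argument that is invalid as stated. Knowing that $\nabla\phi=G$ on a dense, open, full-measure set, where $G(X)=U\Diag{\nabla f(\sigma(X))}V^{\top}$ extends continuously, does \emph{not} imply that $\phi$ is differentiable at the exceptional points: the Cantor function is continuous, is differentiable with derivative $0$ on a dense open set of full measure, and the zero function is a continuous extension of that derivative, yet the Cantor function is not everywhere differentiable (it is not even constant). What rescues your argument is that $\phi$ is locally Lipschitz — since $f$ is smooth and $\sigma(\cdot)$ is $1$-Lipschitz by Mirsky's inequality $\NM{\sigma(X)-\sigma(Y)}{2}\le\NM{X-Y}{F}$ — so one can invoke Clarke's representation of the generalized gradient of a locally Lipschitz function as the convex hull of limits of gradients taken along any full-measure subset of differentiability points. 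Continuity of $G$ then forces the Clarke subdifferential at every $X$ to be the singleton $\{G(X)\}$, which implies strict differentiability with $\nabla\phi(X)=G(X)$. You would also need to actually carry out the well-definedness of $G$ under non-uniqueness of the SVD (simultaneous block rotations for repeated singular values, arbitrary bases and sign flips for the zero block); the two symmetry facts you state — equal partial derivatives across tied coordinates, vanishing partials at zero coordinates — are exactly the right ingredients, but the argument is only gestured at. With those two repairs your outline becomes a real proof.
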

 
From Remark~\ref{remark:c2},
$h$ in (\ref{eq:h}) is $2\rho$-Lipschitz smooth. Hence,
from Lemma~\ref{lem:gradspec},
$\bar{g}(X)$ is also  $2\rho$-Lipschitz smooth
and $\nabla \bar{g} (X) = U \Diag{\nabla h(\sigma(X))} V^{\top}$.
\end{proof}


\subsection{Proposition~\ref{pr:anotherDC}}

\begin{proof}
First, we introduce the following lemma.
	
\begin{lemma}\citep{boyd2004convex}
	\label{lem:convex}
	$\phi(x) = \pi(q(x))$ is convex
	if $\pi$ is convex, non-decreasing and $q$ is convex.
\end{lemma}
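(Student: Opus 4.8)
The plan is to apply the composition rule in Lemma~\ref{lem:convex} with inner function $q(z) = \NM{z}{2}$ and a suitably chosen scalar outer function. First I would define $\pi(\alpha) = \kappa(\alpha) + \frac{\rho}{2}\alpha^2$ for $\alpha \ge 0$, so that $\pi(q(z)) = \kappa(\NM{z}{2}) + \frac{\rho}{2}\NM{z}{2}^2$ is exactly the target function. Since $q(z) = \NM{z}{2}$ is convex with range $[0, \infty)$, it suffices by Lemma~\ref{lem:convex} to verify that $\pi$ is both convex and non-decreasing on $[0, \infty)$.

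To show $\pi$ is non-decreasing, I would compute $\pi'(\alpha) = \kappa'(\alpha) + \rho\alpha$. Because $\kappa$ is non-decreasing we have $\kappa'(\alpha) \ge 0$, and $\rho\alpha \ge 0$ for $\alpha \ge 0$; hence $\pi'(\alpha) \ge 0$. To show $\pi$ is convex, I would argue that $\pi'$ is non-decreasing. The crucial fact is that $\kappa$ is $\rho$-Lipschitz smooth, i.e. $\kappa'$ is $\rho$-Lipschitz, so for any $\alpha > \beta \ge 0$ one has $\kappa'(\alpha) - \kappa'(\beta) \ge -\rho(\alpha - \beta)$. Consequently $\pi'(\alpha) - \pi'(\beta) = (\kappa'(\alpha) - \kappa'(\beta)) + \rho(\alpha - \beta) \ge 0$, which shows $\pi'$ is non-decreasing and therefore $\pi$ is convex on $[0,\infty)$.

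With $\pi$ convex and non-decreasing and $q$ convex, Lemma~\ref{lem:convex} immediately yields that $\pi(q(z)) = \kappa(\NM{z}{2}) + \frac{\rho}{2}\NM{z}{2}^2$ is convex, completing the argument. The one point requiring care is that $\kappa'$ need not be differentiable everywhere, since it is non-differentiable at finitely many points; I therefore expect the mild technical obstacle to be justifying convexity of $\pi$ without a second-derivative test of the form $\kappa'' + \rho \ge 0$. This is precisely why I would establish convexity through the monotonicity of $\pi'$ derived from the Lipschitz bound on $\kappa'$, as above, rather than through $\kappa''$, which need not exist at those points.
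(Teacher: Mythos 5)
Your proposal does not prove the assigned statement. The statement is Lemma~\ref{lem:convex} itself --- the composition rule that $\pi \circ q$ is convex when $\pi$ is convex and non-decreasing and $q$ is convex --- yet your very first sentence invokes exactly this rule as a given, and everything that follows is a proof of a different result, namely Proposition~\ref{pr:anotherDC} (convexity of $\kappa(\NM{\cdot}{2}) + \frac{\rho}{2}\NM{\cdot}{2}^2$). As an argument for the lemma this is circular: you cannot establish the composition rule by applying it. The paper gives no proof of the lemma either, citing \citep{boyd2004convex} instead, so a blind proof here should have been the standard two-line verification: for any $x, y$ and $\lambda \in [0,1]$, convexity of $q$ gives $q(\lambda x + (1-\lambda)y) \le \lambda q(x) + (1-\lambda)q(y)$; since $\pi$ is non-decreasing, $\phi(\lambda x + (1-\lambda)y) = \pi\left(q(\lambda x + (1-\lambda)y)\right) \le \pi\left(\lambda q(x) + (1-\lambda)q(y)\right)$, and convexity of $\pi$ bounds the right-hand side by $\lambda \phi(x) + (1-\lambda)\phi(y)$. (A careful statement also requires $\pi$ to be convex and non-decreasing on an interval containing the range of $q$ --- e.g., $[0,\infty)$ when $q = \NM{\cdot}{2}$ --- or the extended-value convention used by Boyd and Vandenberghe.)

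For what it is worth, the argument you did write is correct and coincides almost step for step with the paper's own proof of Proposition~\ref{pr:anotherDC}: the same choices $\pi(\alpha) = \kappa(\alpha) + \frac{\rho}{2}\alpha^2$ and $q(z) = \NM{z}{2}$, and the same use of the $\rho$-Lipschitz bound $\kappa'(\beta) - \kappa'(\alpha) \le \rho(\alpha - \beta)$ to show that $\pi'$ is non-decreasing; your route to $\pi' \ge 0$ via $\kappa' \ge 0$ is a harmless variant of the paper's observation that $\pi'(0) = \kappa'(0) \ge 0$ combined with monotonicity of $\pi'$, and your closing remark about avoiding a second-derivative test at the finitely many non-differentiable points of $\kappa'$ matches the paper's implicit reasoning. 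So the gap is not in your technique but in the target: the lemma you were asked to prove was never proved.
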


Let $q(x) = \NM{x}{2}$,
and $\pi(\alpha) = \kappa(\alpha) + \frac{\rho}{2} \alpha^2$ where $\alpha \ge 0$.
Thus, $\phi(x) = \pi(q(x)) = \kappa(\NM{x}{2}) + \frac{\rho}{2} \NM{x}{2}^2$.
Obviously, $q$ is convex.
For $\alpha \ge \beta \ge 0$, $0\le \kappa'(\alpha) \le \kappa'(\beta)$.
As $\kappa$ is $\rho$-Lipschitz smooth,
$\kappa'(\beta) - \kappa'(\alpha) \le \rho(\alpha - \beta)$.
Thus,
$\pi'(\alpha) - \pi'(\beta) 
= \kappa'(\alpha) + \rho \alpha - \kappa'(\beta) - \rho \beta \ge 0$,
i.e., $\pi$ is convex.
Besides, $\pi'(0) = \kappa'(0) \ge 0$.
Thus, $\pi'(\alpha) \ge 0$ and $\pi$ is also non-decreasing.
By Lemma~\ref{lem:convex}, 
$\phi$ is also convex.
\end{proof}


\subsection{Theorem~\ref{thm:convAPG}}

\begin{proof} 
First, we introduce a few lemmas.

\begin{lemma} \label{lem:funcvale}
Let $\tilde{X}$ be an inexact solution of the proximal step
$\min_Z h(Z)$, where 
$h(Z)=
\frac{1}{2} \NM{Z - (X - \frac{1}{\tau} \nabla \bar{f}(X))}{F}^2 + \frac{1}{\tau} \breve{g}(Z)$.
Let $\hat{X} = \arg\min_Z h(Z)$.
If $h(\tilde{X}) - h(\hat{X}) \le \epsilon$, 
then
\begin{align*}
F(\tilde{X}) 
\le F(X) - \frac{\tau - \bar{L}}{2} \NM{\tilde{X} - X}{F}^2 
+ \tau\epsilon.
\end{align*}
\end{lemma}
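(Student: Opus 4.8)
The plan is to combine the standard descent lemma for the smooth part $\bar{f}$ with an explicit algebraic identity for the proximal objective $h$, and to use the exact minimizer $\hat{X}$ only through the elementary inequality $h(\hat{X}) \le h(X)$. The reason for choosing $X$ itself as the comparison point (rather than, say, comparing $F$ values directly) is that it makes a gradient-norm term cancel at the very end.

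First I would invoke the descent lemma, valid because $\bar{f}$ is $\bar{L}$-Lipschitz smooth (Assumption A2 combined with Corollary~\ref{cor:c1}), and add $\breve{g}(\tilde{X})$ to both sides so the left-hand side becomes $F(\tilde{X})$:
\[
F(\tilde{X}) \le \bar{f}(X) + \la \nabla\bar{f}(X), \tilde{X} - X\ra + \frac{\bar{L}}{2}\NM{\tilde{X} - X}{F}^2 + \breve{g}(\tilde{X}).
\]
The key observation is that the cross term $\la\nabla\bar{f}(X), \tilde{X}-X\ra + \breve{g}(\tilde{X})$ is precisely what the proximal objective encodes. Expanding the square in the definition of $h$ and multiplying through by $\tau$ gives the identity
\[
\tau\, h(Z) = \frac{\tau}{2}\NM{Z-X}{F}^2 + \la Z-X, \nabla\bar{f}(X)\ra + \frac{1}{2\tau}\NM{\nabla\bar{f}(X)}{F}^2 + \breve{g}(Z).
\]
Evaluating at $Z=\tilde{X}$, solving for the cross term, and substituting into the bound above yields
\[
F(\tilde{X}) \le \bar{f}(X) + \tau\, h(\tilde{X}) - \frac{\tau-\bar{L}}{2}\NM{\tilde{X}-X}{F}^2 - \frac{1}{2\tau}\NM{\nabla\bar{f}(X)}{F}^2.
\]

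Finally I would control $h(\tilde{X})$. Because $\hat{X}$ minimizes $h$, the inexactness hypothesis gives $h(\tilde{X}) \le h(\hat{X}) + \epsilon \le h(X) + \epsilon$. Evaluating the same identity at $Z=X$ shows $\tau\, h(X) = \tfrac{1}{2\tau}\NM{\nabla\bar{f}(X)}{F}^2 + \breve{g}(X)$, so substituting this bound makes the two $\tfrac{1}{2\tau}\NM{\nabla\bar{f}(X)}{F}^2$ terms cancel and leaves exactly
\[
F(\tilde{X}) \le \bar{f}(X) + \breve{g}(X) - \frac{\tau-\bar{L}}{2}\NM{\tilde{X}-X}{F}^2 + \tau\epsilon = F(X) - \frac{\tau-\bar{L}}{2}\NM{\tilde{X}-X}{F}^2 + \tau\epsilon.
\]
There is no genuine obstacle in this argument; the whole content is that the proximal objective $h$ serves as the bridge between the descent-lemma upper bound on $F(\tilde{X})$ and the reference value $F(X)$. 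The only things to watch are the bookkeeping of the $\tau$ factors and the choice of $X$ as the inexactness comparison point, which is what guarantees the clean cancellation of the gradient-norm terms.
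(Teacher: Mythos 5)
Your proposal is correct and follows essentially the same route as the paper's proof: descent lemma for $\bar{f}$, minimality of $\hat{X}$ compared against the point $X$ itself, and the inexactness hypothesis scaled by $\tau$. The only cosmetic difference is that the paper works with the shifted prox-linear objective $\phi(Z) = \la Z - X, \nabla \bar{f}(X)\ra + \frac{\tau}{2}\NM{Z - X}{F}^2 + \breve{g}(Z)$, which equals $\tau h(Z)$ minus the gradient-norm constant, whereas you carry that constant explicitly and let it cancel at the end.
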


\begin{proof}
Let $\phi(Z) = \left\langle Z - X, \nabla f(X) \right\rangle 
+ \frac{\tau}{2} \NM{Z - X}{F}^2
+ \breve{g}(Z)$.
We have
\begin{eqnarray}
\hat{X} & = & \arg\min_Z h(Z) = \arg\min_Z \phi(Z), \label{eq:temp13} \\
\phi(Z) & = & \tau h(Z) - \frac{1}{\tau}\NM{\nabla \bar{f}(X)}{F}^2.
\label{eq:temp14}
\end{eqnarray}
From \eqref{eq:temp13}, we have
\begin{align}
\phi(\hat{X}) 
= \langle  \hat{X} - X, \nabla f(X) \rangle 
+ \frac{\tau}{2} \NM{\hat{X} - X}{F}^2
+ \breve{g}(\hat{X})
\le \breve{g}(X).
\label{eq:temp15}
\end{align}
As $h(\tilde{X}) - h(\hat{X}) \le \epsilon$, 
from \eqref{eq:temp14} (note that $\NM{\nabla \bar{f}(X)}{F}^2$ is a constant), we have
\begin{align*}
\phi(\tilde{X}) - \phi(\hat{X})
= \tau ( h(\tilde{X}) - h(\hat{X}) )
 \le \tau \epsilon
\end{align*}
Then with \eqref{eq:temp15},
we have
$\phi(\tilde{X}) \le \tau \epsilon + \phi(\hat{X}) \le \breve{g}(X) + \tau \epsilon$,
i.e.,
\begin{align}
\langle  \tilde{X} - X, \nabla f(X) \rangle 
+ \frac{\tau}{2} \NM{\tilde{X} - X}{F}^2
+ \breve{g}(\tilde{X})
\le \breve{g}(X) + \tau \epsilon.
\label{eq:temp8}
\end{align}
As $\bar{f}$ is $\bar{L}$-Lipschitz smooth,
\begin{align*}
\bar{f}(\tilde{X})
\le \bar{f}(X) + \langle  \tilde{X} - X, \nabla f(X) \rangle + \frac{\bar{L}}{2}\NM{\tilde{X} - X}{F}^2.
\end{align*}
Combining with \eqref{eq:temp8},
we obtain
\begin{align*}
\bar{f}(\tilde{X}) 
+ \frac{\tau}{2} \NM{\tilde{X} - X}{2}^2
+ \breve{g}(\tilde{X})
\le 
\bar{f}(X) + \frac{\bar{L}}{2}\NM{\tilde{X} - X}{F}^2
+ \breve{g}(X) + \tau \epsilon.
\end{align*}
Thus, 
$F(\tilde{X}) \le F(X) - \frac{\tau - \bar{L}}{2} \NM{\tilde{X} - X}{F}^2 + \tau \epsilon$.
\end{proof}

If step~6 in Algorithm~\ref{alg:inexactAPG} is satisfied, 
$X_{t + 1} = \tilde{Z}_{t + 1}$,
and 
\begin{align}
F(X_{t + 1}) \le F(X_t) - \frac{\delta}{2}\NM{X_{t + 1} - Y_t}{F}^2.
\label{eq:temp10}
\end{align}
Otherwise, step~9 is executed, and from Lemma~\ref{lem:funcvale},
we have
\begin{align}
F(X_{t + 1}) 
\le F(X_t) - \frac{\tau - \bar{L}}{2} \NM{X_{t + 1} - X_t}{F}^2 + \tau \epsilon_t.
\label{eq:temp11}
\end{align}

Partition $\Omega(T) = \{ 1, 2, \dots, T \}$ into $\Omega_1(T)$ and $\Omega_2(T)$, such that step~7 is performed if $t \in \Omega_1(T)$; 
and execute
step~9 
otherwise.
Combining \eqref{eq:temp10} and \eqref{eq:temp11},
we have
\begin{align}
F(X_1) & - F(X_{T + 1})
\notag \\
& \ge \frac{\delta}{2} \sum_{t \in \Omega_1(T)} \NM{X_{t + 1} - Y_{t}}{F}^2
+ \frac{\tau - \bar{L}}{2} \sum_{t \in \Omega_2(T)} 
\left( \NM{X_{t + 1} - X_{t}}{F}^2 - \tau \epsilon_{t} \right),
\notag \\
& \ge \frac{\delta}{2} \sum_{t \in \Omega_1(T)} \NM{X_{t + 1} - Y_{t}}{F}^2
+ \frac{\tau - \bar{L}}{2} \sum_{t \in \Omega_2(T)} 
\NM{X_{t + 1} - X_{t}}{F}^2
- \frac{(\tau - \bar{L})\tau}{2} \sum_{t \in \Omega_2(T)} \epsilon_t
\notag \\
& \ge \frac{\delta}{2} \sum_{t \in \Omega_1(T)} \NM{X_{t + 1} - Y_{t}}{F}^2
+ \frac{\tau - \bar{L}}{2} \sum_{t \in \Omega_2(T)} 
\NM{X_{t + 1} - X_{t}}{F}^2
- \frac{(\tau - \bar{L})\tau}{2} \sum_{t = 1}^{\infty} \epsilon_t
\notag \\
& \ge \frac{\delta}{2} \sum_{t \in \Omega_1(T)} \NM{X_{t + 1} - Y_{t}}{F}^2
- c_1 + \frac{\tau - \bar{L}}{2} \sum_{t \in \Omega_2(T)} \NM{X_{t + 1} - X_{t}}{F}^2,
\label{eq:temp12} 
\end{align}
where $c_1 = \frac{(\tau - \bar{L})\tau}{2} \sum_{t = 1}^{\infty} \epsilon_t < \infty$ and $c_1 \ge 0$.
From \eqref{eq:temp12}, we have
\begin{align}
F(X_1) - \inf_X F(X) 
& + c_1
\ge F(X_1) - \lim_{T \rightarrow \infty} F(X_{T + 1}) + c_1 \nonumber
\\
\ge & \lim_{T \rightarrow \infty}
\frac{\delta}{2} \!\!\! \sum_{t \in \Omega_1(T)} \NM{X_{t + 1} - Y_{t}}{F}^2
+ \frac{\tau - \bar{L}}{2} \!\!\! \sum_{t \in \Omega_2(T)} \NM{X_{t + 1} - X_{t}}{F}^2
\equiv c_2. \label{eq:temp16}
\end{align}
From Assumption A1,
$c_2 \le F(X_1) - \inf_X F(X) + c_1  < \infty$,
thus $c_2 \ge 0$ is a finite constant.
Let $\Omega^{\infty}_1 = \lim_{T \rightarrow \infty} \Omega_1(T)$,
and $\Omega^{\infty}_2 = \lim_{T \rightarrow \infty} \Omega_2(T)$.
Consider the three cases:
\begin{enumerate}
\item \underline{$|\Omega^{\infty}_1|$ is finite,
and $|\Omega^{\infty}_2|$ is infinite}.
As $|\Omega^{\infty}_2| = \infty$ and $\lim_{\NM{X}{F} \rightarrow \infty} F(X) = \infty$ from Assumption A1 and \eqref{eq:temp16}, we must have
\begin{align*}
\lim_{t \in \Omega^{\infty}_2, t \rightarrow \infty}
\NM{X_{t + 1} - X_t}{F}^2 = 0.
\end{align*}
Thus, 
there exists a limit point
such that $X_* = \lim_{t_j \in \Omega^{\infty}_2, t_j \rightarrow \infty} X_{t_j}$ for a subsequence $\{ X_{t_j} \}$ of $\{ X_t \}$.
Since $\lim_{t_j \rightarrow \infty} \epsilon_{t_j} = 0$, then
\begin{align*}
\lim_{t_j \in \Omega^{\infty}_2, t_j \rightarrow \infty} X_{t_j + 1} 
= \lim_{t_j \in \Omega^{\infty}_2, t_j \rightarrow \infty} \Px{\frac{1}{\tau} \breve{g}}{X_{t_j} - \frac{1}{\tau} \nabla \bar{f}(X_{t_j})}.
\end{align*}
As a result, 
\begin{align*}
0 \in \lim_{t_j \in \Omega^{\infty}_2, t_j \rightarrow \infty} 
\frac{1}{\tau}\nabla \bar{f}(X_{t_j}) + (X_{t_j + 1} - X_{t_j}) 
+ \frac{1}{\tau} \partial \breve{g}(X_{t_j + 1}).
\end{align*}
Since both $\lim_{t_j \in \Omega^{\infty}_2, t_j \rightarrow \infty}X_{t_j} = \lim_{t_j \in \Omega^{\infty}_2, t_j \rightarrow \infty} X_{t_j + 1} = X_*$, 
we then have
$\nabla \bar{f}(X_*) + \partial \breve{g}(X_*) \ni 0$,
and $X_*$ is a critical point of \eqref{eq:pro}.

\item \underline{$|\Omega^{\infty}_1|$ is infinite,
	and $|\Omega^{\infty}_2|$ is finite.}
As $\Omega^{\infty}_1$ is infinite and $\lim_{\NM{X}{F} \rightarrow \infty} F(X) = \infty$ from Assumption A1 and \eqref{eq:temp16}, we must have
\begin{align*}
\lim_{t_j \in \Omega^{\infty}_1, t_j \rightarrow \infty}
\NM{X_{t_j + 1} - Y_{t_j}}{F}^2 = 0.
\end{align*}
for a subsequence $\{ X_{t_j} \}$ of $\{ X_t \}$.
Thus, there must exist a limit point such that 
\begin{align}
X_* 
= \lim_{t_j \in \Omega^{\infty}_1, t_j \rightarrow \infty} X_{t_j + 1} 
=  \lim_{t_j \in \Omega^{\infty}_1, t_j \rightarrow \infty} Y_{t_j}.
\label{eq:temp19}
\end{align}
As $\lim_{t_j \rightarrow \infty} \epsilon_{t_j} = 0$,  we have
\begin{align*}
0 \in \lim_{t_j \in \Omega^{\infty}_1, t_j \rightarrow \infty} 
\frac{1}{\tau}\nabla \bar{f}(Y_{t_j}) + (X_{t_j + 1} - Y_{t_j}) 
+ \frac{1}{\tau} \partial \breve{g}(X_{t_j + 1}).
\end{align*}
From \eqref{eq:temp19}, thus we have
$\nabla \bar{f}(X_*) + \partial \breve{g}(X_*) \ni 0$
and $X_*$ is a critical point of \eqref{eq:pro}.

\item \underline{Both $\Omega^{\infty}_1$ and $\Omega^{\infty}_2$ are infinite.}
From above two cases, 
we can see $\{X_t\}$ is bounded and, the limit points of $\{X_t\}$ are also critical points either $|\Omega^{\infty}_1|$ or $|\Omega^{\infty}_2|$ is infinite.
In the third case, both of them are infinite,
thus any limit points of $\{X_t\}$ are also critical points of \eqref{eq:pro}.
\end{enumerate} 

As a result,
$\{ X_t \}$ are bounded and its limits points are all critical points of \eqref{eq:pro}.
\end{proof}


\subsection{Proposition~\ref{pr:inexactrate}}

\begin{proof}
From \eqref{eq:temp16},	
we have
\begin{align}
\frac{\delta}{2} \sum_{t_1 \in \Omega_1(T)} \NM{X_{t_1 + 1} - Y_{t_1}}{F}^2
+ \frac{\tau - \bar{L}}{2} \sum_{t_2 \in \Omega_2(T)} \NM{X_{t_2 + 1} - X_{t_2}}{F}^2 < c_2,
\label{eq:temp17}
\end{align}
where $c_2 \in (0, \infty)$ is a positive constant.
Let $c_3 = \min(\frac{\delta}{2}, \frac{\tau - \bar{L}}{2})$ and
using the definition of $V_t$,
\eqref{eq:temp17} can be written as  
\begin{align*}
c_3 \sum_{t = 1}^T \NM{X_{t + 1} - V_t}{F}^2 
\le
\frac{\delta}{2} \sum_{t_1 \in \Omega_1(T)} \NM{X_{t_1 + 1} - Y_{t_1}}{F}^2
+ \frac{\tau - \bar{L}}{2} \sum_{t_2 \in \Omega_2(T)} \NM{X_{t_2 + 1} - X_{t_2}}{F}^2
\le c_2.
\end{align*}
Since $c_2$ is finite, thus $\lim_{t \rightarrow \infty} d_t \equiv \NM{X_{t + 1} - V_t}{F}^2 = 0$.
Besides, we have
\begin{align*}
\min_{t = 1, \dots, T} \sum_{t = 1}^T \NM{X_{t + 1} - V_t}{F}^2
\le \frac{1}{T} \sum_{t = 1}^T \NM{X_{t + 1} - V_t}{F}^2
\le \frac{c_2}{c_3 T}.
\end{align*}
\end{proof}


\subsection{Proposition~\ref{pr:gradkappa}}

\begin{proof}
Note from (\ref{eq:lowf}) that $\nabla \bar{f}(S) = \nabla f(S) + \nabla \bar{g}(S)$.
Using the matrix chain rule,
since $S = \alpha X_t + \beta u_t v_t^{\top}$ and $\frac{\partial S}{\partial \alpha} = X_t$,
then
\begin{align*}
\frac{\partial \bar{f}(S)}{\partial \alpha} 
= \left\langle \nabla \bar{f}(S), \frac{\partial S}{\partial \alpha} \right\rangle
= \alpha \langle X_t, \nabla \bar{f}(S) \rangle.
\end{align*}
Similarly,
since $\frac{\partial S}{\partial \beta} = u_t v_t^{\top}$
\begin{align*}
\frac{\partial \bar{f}(S)}{\partial \beta} 
= \left\langle \nabla \bar{f}(S), \frac{\partial S}{\partial \beta} \right\rangle
= \beta \left\langle u_t v_t^{\top}, \nabla \bar{f}(S) \right\rangle = \beta \left( u_t^{\top} \nabla \bar{f}(S) v_t \right).
\end{align*}
As $\bar{g}(S)  = \mu \sum_{i = 1}^m \kappa(\sigma_i(S)) - \mu \kappa_0 \sigma_i(S)$, 
using Lemma~\ref{lem:gradspec}, 
$\nabla \bar{f}(X) 
= \nabla f(S) + \mu U_S \Diag{w} V_S^{\top}$
and $w_i = \kappa'(\sigma_i(S)) - \kappa_0$.
\end{proof}


\subsection{Corollary~\ref{pr:gradkappa2}}

\begin{proof}
Note that the SVD of $X$ is $(U U_B) \Diag{[\sigma_1(B), \dots, \sigma_k(B)]} (V V_B)^{\top}$.
Using Lemma~\ref{lem:gradspec}, 
\begin{align*}
\nabla \bar{f}(X)
= \nabla f(X) + \nabla \bar{g}(X)
= \nabla f(X) + \mu (U U_B) \Diag{w} (V V_B)^{\top}.
\end{align*}
where $w \in \R^k$ with $w_i = \kappa'(\sigma_i(B)) - \kappa_0$.
\end{proof}


\subsection{Proposition~\ref{pr:svreduce}}

\begin{proof}
As $\bar{g}(X)$ is defined on singular values of the input matrix $X$,
we only need to show $U B V^{\top}$ and $B$ have exactly the same singular values.
Let SVD of $B = U_B \Diag{\sigma(B)} V_B^{\top}$ where $\sigma(B) = [\sigma_1(B), \dots, \sigma_m(B)]$.
As $U$ and $V$ are orthogonal,
it is easy to see $\left(U U_B\right) \Diag{\sigma(B)} \left(V_B V\right)^{\top}$
is the SVD of $X$. 
Thus,
the Proposition holds.
\end{proof}


\subsection{Theorem~\ref{thm:conv:ncg}}
\label{app:thm:conv:ncg}

\begin{proof}
We first introduce two propositions.

\begin{proposition}\citep{mishra2013low} \label{pr:localopt}
For a square matrix $X$,
let $\sym{X} = \frac{1}{2}(X + X^{\top})$.
The first-order optimality conditions for \eqref{eq:local} are
\begin{align*}
\nabla \bar{f}(X) V B  - U \sym{U^{\top} \nabla \bar{f}(X) V B} & = 0, \\
(\nabla \bar{f}(X))^{\top} U B - V \sym{V^{\top} \nabla \bar{f}(X) U B} & = 0, \\
\sym{U^{\top} \nabla \bar{f}(X) V} + \bar{\mu} I & = 0.  \end{align*}
\end{proposition}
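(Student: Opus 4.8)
The plan is to treat \eqref{eq:local} as a smooth optimization over the product of the two Stiefel manifolds defined by $U^\top U = I$, $V^\top V = I$ and the symmetric positive semidefinite cone for $B$, and to obtain the stated conditions by combining the Euclidean gradients of the objective $\Phi(U,B,V) \equiv f(UBV^\top) + \bar{g}(B) + \bar{\mu}\Tr{B}$ with the appropriate tangent-space projections. Writing $X = UBV^\top$ and $G = \nabla f(X)$, a direct differentiation (using that $B$ is symmetric, and that $\bar{g}(B)$ and $\bar{\mu}\Tr{B}$ are independent of $U,V$) gives
\begin{align*}
\nabla_U\Phi = G V B, \quad \nabla_V\Phi = G^\top U B, \quad \nabla_B\Phi = U^\top G V + \nabla\bar{g}(B) + \bar{\mu} I.
\end{align*}
For the constraint $U^\top U = I$, the tangent space at $U$ consists of the $Z$ with $U^\top Z$ skew-symmetric, so the Riemannian gradient is the projection $\nabla_U\Phi - U\sym{U^\top\nabla_U\Phi}$, whose vanishing is the stationarity condition; the analogous statement holds for $V$. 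For $B$, stationarity in the cone of symmetric matrices requires $\sym{\nabla_B\Phi}$ to vanish.

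The crux is then to rewrite these conditions, which involve $\nabla f(X)$ and the separate term $\nabla\bar{g}$, entirely in terms of $\nabla\bar{f}(X) = \nabla f(X) + \nabla\bar{g}(X)$. First I would record, via Proposition~\ref{pr:svreduce}, that the SVD of $X = UBV^\top$ is $(UU_B)\Diag{\sigma(B)}(VV_B)^\top$, where $B = U_B\Diag{\sigma(B)}U_B^\top$ is the eigendecomposition of the PSD matrix $B$ (so $U_B = V_B$). Lemma~\ref{lem:gradspec} then yields $\nabla\bar{g}(X) = \mu(UU_B)\Diag{w}(VV_B)^\top$ with $w_i = \kappa'(\sigma_i(B)) - \kappa_0$. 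The key computation is that, after right-multiplying by $VB$ and using $V^\top V = I$ together with $V_B^\top B = \Diag{\sigma(B)}U_B^\top$, the term $\nabla\bar{g}(X)VB$ collapses to $U M$ with $M = \mu U_B\Diag{w\odot\sigma(B)}U_B^\top$ symmetric. Since $M = \sym{M}$ and $U^\top U = I$, this added term passes unchanged through the projection and is annihilated:
\begin{align*}
\nabla\bar{f}(X)VB - U\sym{U^\top\nabla\bar{f}(X)VB} = \nabla f(X)VB - U\sym{U^\top\nabla f(X)VB},
\end{align*}
so the first stated condition is exactly the vanishing of the Riemannian gradient in $U$. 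The argument for $V$ is symmetric, with $\nabla\bar{g}(X)^\top UB$ collapsing to $V M'$ for a symmetric $M'$.

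For the third condition I would compute $U^\top\nabla\bar{g}(X)V = \mu U_B\Diag{w}V_B^\top = \nabla\bar{g}(B)$, using $U^\top U = V^\top V = I$ and $U_B = V_B$, so that $\sym{U^\top\nabla\bar{f}(X)V} = \sym{U^\top\nabla f(X)V} + \nabla\bar{g}(B)$, the last term already being symmetric. Hence $\sym{\nabla_B\Phi} = \sym{U^\top\nabla f(X)V} + \nabla\bar{g}(B) + \bar{\mu}I = \sym{U^\top\nabla\bar{f}(X)V} + \bar{\mu}I$, and setting it to zero gives the third condition. I expect the main obstacle to be the spectral bookkeeping that identifies $\nabla\bar{g}(X)VB$ with a right-multiplication of $U$ by a symmetric matrix: this is precisely what renders the smooth concave part $\bar{g}$ invisible to the manifold stationarity in $U$ and $V$ while contributing exactly $\nabla\bar{g}(B)$ to the $B$-equation. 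A secondary technical point is the PSD constraint on $B$, for which I would restrict attention to points with $B \succ 0$ (or invoke complementary slackness) so that stationarity reduces to the stated gradient equation, and where the spectral functions are differentiable so that Lemma~\ref{lem:gradspec} applies.
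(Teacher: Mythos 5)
Your proposal is correct, but it takes a different route from the paper: the paper offers no derivation at all for this proposition --- it is imported wholesale from \citep{mishra2013low}, implicitly applied with $\bar{f}$ in place of the smooth loss after rewriting the objective of \eqref{eq:local} as $\bar{f}(UBV^{\top}) + \bar{\mu}\Tr{B}$ via Proposition~\ref{pr:svreduce}. You instead derive the conditions from scratch: Euclidean gradients of $f(UBV^{\top}) + \bar{g}(B) + \bar{\mu}\Tr{B}$, the Stiefel tangent-space projection $Z \mapsto Z - U\sym{U^{\top}Z}$, and then the spectral bookkeeping showing $\nabla\bar{g}(X)VB = UM$ with $M = \mu U_B \Diag{w \odot \sigma(B)} U_B^{\top}$ symmetric (hence annihilated by the projection) and $U^{\top}\nabla\bar{g}(X)V = \nabla\bar{g}(B)$. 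I verified these computations: with $B = U_B\Diag{\sigma(B)}U_B^{\top}$ and $V_B = U_B$, indeed $V_B^{\top}B = \Diag{\sigma(B)}U_B^{\top}$, the projection kills $UM$ for symmetric $M$, and the analogous collapse holds on the $V$ and $B$ sides, so the $\bar{g}$ term is exactly absorbed into $\nabla\bar{f}$. What your derivation buys is self-containedness and an explanation of \emph{why} the stated conditions can be written purely in terms of $\nabla\bar{f}(X)$ even though $\bar{g}$ enters \eqref{eq:local} only through $B$; what the citation buys is brevity. Your derivation is even mildly diagnostic: it produces the dimensionally consistent second condition $(\nabla\bar{f}(X))^{\top}UB - V\sym{V^{\top}(\nabla\bar{f}(X))^{\top}UB} = 0$, whereas the proposition as printed omits the transpose inside $\sym{\cdot}$, which does not type-check ($V^{\top}$ is $k \times n$ while $\nabla\bar{f}(X)$ is $m \times n$). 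Your caveat on the positive semidefinite constraint (restricting to $B \succ 0$ or invoking complementary slackness so that the third condition is the interior stationarity equation) matches the setting of \citep{mishra2013low}, where the fixed-rank factorization keeps $B$ positive definite, so no gap remains there either.
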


\begin{proposition} \label{pr:equivopt}
	If \eqref{eq:equiv:lowrank} has a critical point with rank-$r$,
	choose matrix size of $U \in \R^{m \times r}$, $V \in \R^{n \times r}$ and $B \in \mathcal{S}_+^{r \times r}$, 
	then any critical points of \eqref{eq:local} is also a critical point of \eqref{eq:equiv:lowrank}.
\end{proposition}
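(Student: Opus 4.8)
The plan is to show that every critical point $(U,B,V)$ of \eqref{eq:local} yields $X = UBV^\top$ satisfying the first-order condition of \eqref{eq:equiv:lowrank}, namely $0\in\nabla\bar{f}(X)+\bar{\mu}\,\partial\NM{X}{*}$. First I would record that \eqref{eq:local} is merely a reparametrization of \eqref{eq:equiv:lowrank}: by Proposition~\ref{pr:svreduce} together with $\NM{X}{*}=\Tr{B}$ for $B\in\mathcal{S}_+$ and orthogonal $U,V$, the objective of \eqref{eq:local} at $(U,B,V)$ equals $\bar{f}(X)+\bar{\mu}\NM{X}{*}$. Writing the eigendecomposition $B=U_B\,\Diag{\lambda_1,\dots,\lambda_r}\,U_B^\top$ with $\lambda_i\ge 0$ and collecting the columns $U_B^{(1)}$ with $\lambda_i>0$, I obtain the compact SVD $X=P\Sigma Q^\top$ of rank $s=\text{rank}(B)\le r$, where $P=UU_B^{(1)}$ and $Q=VU_B^{(1)}$ have orthonormal columns. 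Since $\partial\NM{X}{*}=\{PQ^\top+W : P^\top W=0,\ WQ=0,\ \NM{W}{2}\le 1\}$, the goal reduces to producing such a $W$ with $\nabla\bar{f}(X)+\bar{\mu}(PQ^\top+W)=0$.

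Next I would translate the three stationarity conditions of Proposition~\ref{pr:localopt} into these SVD coordinates. Abbreviating $G=\nabla\bar{f}(X)$ and $C=U^\top G V$, condition~3 gives $\sym{C}=-\bar{\mu}I$, so $C=-\bar{\mu}I+K$ with $K$ skew-symmetric; substituting into condition~1 (which forces $CB=U^\top GVB$ to be symmetric) shows that $K$ anticommutes with $B$, and because $\lambda_i+\lambda_j>0$ on the active block this annihilates $K$ there. Feeding this back into conditions~1 and~2 and using $VB=Q\Sigma(U_B^{(1)})^\top$ yields the alignment identities $GQ=-\bar{\mu}P$ and $G^\top P=-\bar{\mu}Q$. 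Setting $W=-G/\bar{\mu}-PQ^\top$, these identities give exactly $P^\top W=0$ and $WQ=0$, so that $W$ is an admissible nuclear-norm subgradient residual apart from its magnitude.

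The remaining and hardest step is the spectral bound $\NM{W}{2}\le 1$, equivalently $\NM{G+\bar{\mu}PQ^\top}{2}\le\bar{\mu}$. I expect this to be the main obstacle, because conditions~1--3 constrain $G$ only on $\text{range}(U)\times\text{range}(V)$ and say nothing about the action of $G$ on the orthogonal complement, which is precisely where the residual $G+\bar{\mu}PQ^\top$ lives. To close this gap I would exploit the rank-$r$ hypothesis together with the boundary behaviour of the constraint $B\in\mathcal{S}_+$: since a rank-$r$ critical point of \eqref{eq:equiv:lowrank} exists and the factors are taken of width exactly $r$, a critical $B$ is forced to be rank-deficient, so the PSD-cone (KKT) stationarity along a fresh null direction applies. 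The key structural fact is that $\kappa_0=\kappa'(0)$, so the concave correction $\bar{g}$ in \eqref{eq:lowf} contributes no first-order term along a new singular direction, leaving the nuclear-norm slope $\bar{\mu}$ as the effective threshold. This gives the operator-norm certificate $\NM{(I-PP^\top)G(I-QQ^\top)}{2}\le\bar{\mu}$, which is exactly the Frank-Wolfe atom-selection bound $s_t\le\bar{\mu}$ that the algorithm enforces.

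Combining this bound with the alignment identities gives $\NM{W}{2}\le 1$, hence $-G/\bar{\mu}=PQ^\top+W\in\partial\NM{X}{*}$ and therefore $0\in\nabla\bar{f}(X)+\bar{\mu}\,\partial\NM{X}{*}$, so $X$ is a critical point of \eqref{eq:equiv:lowrank} (equivalently of \eqref{eq:lowrank}). The delicate point throughout is that only the alignment part of the subgradient follows from the local conditions of Proposition~\ref{pr:localopt}; the magnitude bound is where the existence of a rank-$r$ critical point, the matching factor width, and the identity $\kappa'(0)=\kappa_0$ must be used jointly, and I would devote the bulk of the argument to making that bound rigorous.
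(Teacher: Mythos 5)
Your overall skeleton is the same as the paper's: both arguments rest on the subdifferential characterization of the nuclear norm and on the first-order conditions of Proposition~\ref{pr:localopt}, and your derivation of the alignment identities $\nabla\bar{f}(X)Q=-\bar{\mu}P$ and $\nabla\bar{f}(X)^\top P=-\bar{\mu}Q$ (via the skew part of $U^\top\nabla\bar{f}(X)V$ anticommuting with $B$ and hence vanishing on its positive eigenspace) is correct, and in fact more careful than the paper's, which invokes only its third condition plus a norm inequality. You also correctly isolate the real difficulty: the spectral bound $\NM{(I-PP^\top)\nabla\bar{f}(X)(I-QQ^\top)}{2}\le\bar{\mu}$. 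The paper disposes of this step by asserting that $\nabla\bar{f}(\hat{X})$ must take the form $-\bar{\mu}\hat{U}\hat{V}^\top-\bar{\mu}\hat{U}_{\bot}\hat{\Sigma}_{\bot}\hat{V}_{\bot}^\top$ with diagonal entries of $\hat{\Sigma}_{\bot}$ at most $1$, and then uses the rank-$r$ hypothesis only to match the resulting inclusion against criticality of \eqref{eq:equiv:lowrank}; you instead try to derive the bound, which is where your proposal breaks.

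The gap is in your proposed mechanism. First, the claim that ``a critical $B$ is forced to be rank-deficient'' is a non sequitur: the existence of a rank-$r$ critical point of \eqref{eq:equiv:lowrank} places no constraint on the rank of $B$ at an arbitrary critical point of \eqref{eq:local}. Quite the opposite --- the factored counterpart of that assumed rank-$r$ critical point has $B$ of full rank $r$, so your premise fails precisely at the points the proposition is meant to cover. Second, even granting rank-deficiency, PSD-cone stationarity with respect to $B$ cannot deliver the bound: every admissible perturbation of $B$ changes $X$ by a matrix of the form $U\Delta V^\top$, whose column space lies in $\text{range}(U)$ and row space in $\text{range}(V)$; such variations carry no information about $(I-PP^\top)\nabla\bar{f}(X)(I-QQ^\top)$, which is exactly the block you identified as unconstrained. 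Fresh directions can only be probed by varying $U$ and $V$, and those stationarity conditions are conditions 1--2 of Proposition~\ref{pr:localopt} --- the very ones you already observed say nothing about the orthogonal complement. Third, the appeal to the Frank--Wolfe atom-selection bound conflates iterates of Algorithm~\ref{alg:novelfw} with critical points of \eqref{eq:local}: the proposition quantifies over \emph{all} critical points of \eqref{eq:local}, so no property enforced by the algorithm is available. (Your observation that $\kappa'(0)=\kappa_0$ makes $\bar{g}$ first-order flat along new singular directions is true, and explains why the effective threshold is $\bar{\mu}$, but it does not by itself produce the operator-norm bound.) In short, your steps 1--5 reproduce, more rigorously, what the paper actually establishes, but the step you yourself flagged as the hardest remains unproven, and the route you sketch for it would fail.
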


\begin{proof}
Subdifferential of the nuclear norm can be obtained as \citep{watson1992characterization}
	\begin{align}
	\partial \NM{X}{*}
	= \{ U V^{\top} + W : U^{\top} W = 0, W V = 0, \NM{W}{\infty} \le 1 \},
	\label{eq:defsubnn}
	\end{align}
	where $X = U B V^{\top}$. Let $\hat{X} = \hat{U} \hat{B} \hat{V}^{\top}$ be a critical point of \eqref{eq:local},
we have $\sym{\hat{U}^{\top} \nabla \bar{f}(\hat{X}) \hat{V}} + \bar{\mu} I = 0$ dues to Proposition~\ref{pr:localopt}.
	From property of matrix norm,
	we have
	\begin{align*}
	\lambda = \NM{\sym{\hat{U}^{\top} \nabla \bar{f}(\hat{X}) \hat{V}}}{\infty} 
	\le \NM{\hat{U}^{\top} \nabla \bar{f}(\hat{X}) \hat{V}}{\infty} \le \NM{\nabla \bar{f}(\hat{X})}{\infty}.
	\end{align*}
	The equality holds only when $\nabla \bar{f}(\hat{X}) = - \bar{\mu} \hat{U} \hat{V}^{\top} - \bar{\mu} \hat{U}_{\bot} \hat{\Sigma}_{\bot} \hat{V}^{\top}_{\bot}$
	where $\hat{U}_{\bot}$ and $\hat{V}_{\bot}$ are orthogonal matrix with $\hat{U}^{\top} \hat{U}_{\bot} = 0$ and $\hat{V}^{\top} \hat{V}_{\bot} = 0$, and $\hat{\Sigma}_{\bot}$ is a diagonal matrix with positive elements $[\Sigma_{\bot}]_{ii} \le 1$. 
	Combining this fact with \eqref{eq:defsubnn},
	we can see
	\begin{align}
	\nabla \bar{f}(\hat{X}) \in - \bar{\mu} \partial \NM{\hat{X}}{*}.
	\label{eq:temp20}
	\end{align}
	Then, for \eqref{eq:equiv:lowrank}, if $X_*$ is a critical point then we have
	\begin{align}
	\nabla \bar{f}(X_*) \in - \bar{\mu} \partial \NM{X_*}{*}.
	\label{eq:temp21}
	\end{align}
	Comparing \eqref{eq:temp20} and \eqref{eq:temp21}, 
	the difference is on rank of $\hat{X}$ and $X_*$.
	As \eqref{eq:equiv:lowrank} has a critical point with rank-$r$
	a critical point of \eqref{eq:local}, 
	$\hat{X}$ is also a critical point of \eqref{eq:equiv:lowrank}.
\end{proof}

In Algorithm~\ref{alg:novelfw},
the size of $U$, $V$ and $B$ are picked up as $m \times t$, $n \times t$ and $t \times t$.
If \eqref{eq:equiv:lowrank} has a critical point with rank-$r$,
then as iteration goes and $t = r$,  
from Proposition~\ref{pr:equivopt},
Algorithm~\ref{alg:novelfw} will return a critical point of \eqref{eq:equiv:lowrank}.
\end{proof}

\section{Details in Section~\ref{sec:imgden}}

\subsection{CCCP}
\label{sec:app:CCCP}

Using Proposition~\ref{pr:anotherDC},
we can decompose $\kappa(|x|) = \hat{\varsigma}(x) + \tilde{\varsigma}(x)$ where
$\hat{\varsigma}(x) = - \frac{\rho}{2} x^2$ is convex
and $\tilde{\varsigma}(x) = \kappa(|x|) + \frac{\rho}{2} x^2$ is concave.
We can apply above decomposition on $\kappa$ into \eqref{eq:ncvximgTV} and get
a DC decomposition as
\begin{align*}
& \tilde{F}(X)
= \sum_{i = 1}^m \sum_{j = 1}^n \tilde{\varsigma}\left( \left[ Y - X \right]_{ij} \right) 
+ \mu \sum_{i = 1}^{m - 1} \sum_{j = 1}^m \tilde{\varsigma}\left(\left[ D_v X \right]_{ij}\right)
+ \mu \sum_{i = 1}^{n} \sum_{j = 1}^{n - 1} \tilde{\varsigma}\left(\left[ X D_h \right]_{ij}\right),
\\
& \hat{F}(X)
= \sum_{i = 1}^m \sum_{j = 1}^n \hat{\varsigma}\left( \left[ Y - X \right]_{ij} \right) 
+ \mu \sum_{i = 1}^{m - 1} \sum_{j = 1}^m \hat{\varsigma}\left(\left[ D_v X \right]_{ij}\right)
+ \mu \sum_{i = 1}^{n} \sum_{j = 1}^{n - 1} \hat{\varsigma}\left(\left[ X D_h \right]_{ij}\right).
\end{align*}
Then, CCCP procedures at Section~\ref{sec:difcp} can be applied.

\subsection{Smoothing}
\label{sec:app:Smoothing}

As LSP function is used as $\kappa$, 
a smoothed version of it can be obtained as
$\tilde{\kappa}_{\lambda}(x) = \beta \log\left( 1 + \frac{h_{\lambda}(x)}{\theta} \right) $
where $h_{\lambda}(x)
= 
\begin{cases}
|x| & \text{if}\; |x| \ge \lambda
\\
\frac{x^2}{2\lambda} + \frac{\lambda}{2} & \text{otherwise}
\end{cases}$.
Thus, \eqref{eq:ncvximgTV} is smoothed as
\begin{align*}
\tilde{F}_{\lambda}(X)
= \sum_{i = 1}^m \sum_{j = 1}^n \tilde{\kappa}_{\lambda}\left( \left[Y - X\right]_{ij} \right) 
+ \mu \sum_{i = 1}^{m - 1} \sum_{j = 1}^m \tilde{\kappa}_{\lambda}\left( \left[ D_v X \right]_{ij} \right)
+ \mu \sum_{i = 1}^{n} \sum_{j = 1}^{n - 1} \tilde{\kappa}_{\lambda}\left( \left[ X D_h \right]_{ij} \right).
\end{align*}
Then, gradient descent is used for optimization \citep{chen2012smoothing}.
Specifically, 
we need to minimize a sequence of subproblems $\{\tilde{F}_{\lambda_1}(X), \tilde{F}_{\lambda_2}(X), \dots \}$
with $\lambda_i = \lambda_0 \cdot \nu^i$,
and using $X$ from $\tilde{F}_{\lambda_{i - 1}}(X)$ to warm start $\tilde{F}_{\lambda_i}(X)$.
In the experiment, we set $\lambda_0 = 0.1$ and $\nu = 0.95$.

{
\bibliography{bib}
}

\end{document}